\documentclass[a4paper,12pt,leqno]{amsart}
\usepackage[colorlinks=true,linkcolor=darkblue,citecolor=darkblue]{hyperref}

\usepackage{graphicx}
\usepackage{pinlabel} %for algebraic and geometric topology
\usepackage{amsmath}
\usepackage{amsfonts}
\usepackage{amssymb}
\usepackage{mathtools}
\usepackage[all,cmtip]{xy}
\usepackage{amsthm}
\usepackage{tikz-cd}
\usepackage{comment}
\usepackage{enumerate}
\usepackage{url}
\usepackage{epsfig}
\usepackage[utf8]{inputenc}
\usepackage{aliascnt}
\usepackage[capitalise]{cleveref}
\usepackage{geometry}
\usepackage{multicol}

\usepackage{tabularx}

\makeatletter
\providecommand\@dotsep{5}
\def\listtodoname{List of Todos}
\def\listoftodos{\@starttoc{tdo}\listtodoname}
\makeatother

\newtheorem{theorem}{Theorem}[section]
\newaliascnt{proposition}{theorem}
\newtheorem{proposition}[proposition]{Proposition}
\aliascntresetthe{proposition}
\newaliascnt{corollary}{theorem}
\newtheorem{corollary}[corollary]{Corollary}
\aliascntresetthe{corollary}
\newaliascnt{lemma}{theorem}
\newtheorem{lemma}[lemma]{Lemma}
\aliascntresetthe{lemma}
\newaliascnt{claim}{theorem}

\aliascntresetthe{claim}
\newaliascnt{fact}{theorem}

\aliascntresetthe{fact}

\newcommand{\mycomment}[1]{}

\newaliascnt{conjecture}{theorem}

\aliascntresetthe{conjecture}
  \theoremstyle{definition}
\newaliascnt{definition}{theorem}

\aliascntresetthe{definition}
\newaliascnt{example}{theorem}

\aliascntresetthe{example}
\newtheorem{remark}{Remark}
\newaliascnt{question}{theorem}

\aliascntresetthe{question}
\newaliascnt{notation}{theorem}
\newtheorem{notation}[notation]{Notation}
\aliascntresetthe{notation}

\crefname{theorem}{theorem}{theorems}
\Crefname{theorem}{Theorem}{Theorems}
\crefname{proposition}{proposition}{propositions}
\Crefname{proposition}{Proposition}{Propositions}
\crefname{corollary}{corollary}{corollaries}
\Crefname{corollary}{Corollary}{Corollaries}
\crefname{lemma}{lemma}{lemmas}
\Crefname{lemma}{Lemma}{Lemmas}
\crefname{claim}{claim}{claims}
\Crefname{claim}{Claim}{Claims}
\crefname{fact}{fact}{facts}
\Crefname{fact}{Fact}{Facts}
\crefname{conjecture}{conjecture}{conjectures}
\Crefname{conjecture}{Conjecture}{Conjectures}
\crefname{definition}{definition}{definitions}
\Crefname{definition}{Definition}{Definitions}
\crefname{example}{example}{examples}
\Crefname{example}{Example}{Examples}
\crefname{remark}{remark}{remarks}
\Crefname{remark}{Remark}{Remarks}
\crefname{question}{question}{questions}
\Crefname{question}{Question}{Questions}
\crefname{notation}{notation}{notations}
\Crefname{notation}{Notation}{Notations}

\newcommand{\calN}{{\mathcal N}}

\newcommand{\nbeq}{\begin{equation}}
\newcommand{\neeq}{\end{equation}}
\newcommand{\beq}{\begin{equation*}}
\newcommand{\eeq}{\end{equation*}}

\DeclareMathOperator{\cd}{cd}

\DeclareMathOperator{\vcd}{vcd}
\DeclareMathOperator{\cdfin}{\underline{cd}}

\DeclareMathOperator{\gd}{gd}
\DeclareMathOperator{\gdfin}{\underline{gd}}

\newcommand{\modnn}{\mathcal{N}_{g,n}}

\DeclareMathOperator{\Mod}{Mod}

\definecolor{darkblue}{rgb}{0.0, 0.0, 0.55}
\usepackage{verbatim}

\usepackage{listings}

\begin{document}

\title[]{The proper geometric dimension of the mapping class groups of non-orientable surfaces with punctures}

\author[N. Colin]{Nestor Colin}
\author[R. Jiménez Rolland]{Rita Jiménez Rolland}
\author[P. L. León Álvarez ]{Porfirio L. León Álvarez}
\address{Instituto de Matemáticas, Universidad Nacional Autónoma de México. Oaxaca de Juárez, Oaxaca, México 68000}
\email{rita@im.unam.mx}
\email{ncolin@im.unam.mx}
\email{porfirio.leandro92@gmail.com}

\author[L. J. Sánchez Saldaña]{Luis Jorge S\'anchez Salda\~na}
\address{Departamento de Matemáticas, Facultad de Ciencias, Universidad Nacional Autónoma de México}
\email{luisjorge@ciencias.unam.mx}

\date{\today}

\keywords{Mapping class groups, proper geometric dimension, non-orientable surfaces}

\begin{abstract}
We study the proper geometric dimension of {\it full} mapping class groups of
non-orientable surfaces with punctures. Building on the computation for closed
non-orientable surfaces, we prove that the proper geometric dimension agrees
with the virtual cohomological dimension in all but a finite collection of
low-complexity cases, for which we obtain explicit bounds. Our results provide
the non-orientable counterpart of the corresponding computations for mapping
class groups of closed and punctured orientable surfaces.
\end{abstract}
\maketitle

%\tableofcontents

\section{Introduction}

For a discrete group $G$, a model for a classifying space of $G$ for proper actions $\underline{E}G$ is a contractible $G$-CW-complex $X$ on which $G$ acts properly, and such that the fixed point set of a subgroup $H < G$ is contractible if $H$ is finite, and is empty otherwise. The {\it proper geometric dimension} of $G$, denoted by $\underline{\gd}(G)$,  is the minimal dimension of such models.

The proper geometric dimension of mapping class groups of closed orientable surfaces was computed by Aramayona and Mart\'{\i}nez-P\'{e}rez
\cite{AMP14}. The case of orientable surfaces with punctures was subsequently treated by the present authors in \cite{4Amigos_FullMCG}. For closed non-orientable surfaces, the corresponding computation was carried out by Hidber, S\'{a}nchez Salda\~na, and Trujillo-Negrete \cite{HSSTN24}.  The purpose of this paper is to study the remaining case of non-orientable surfaces with punctures.

Let \( N_g \) be a  non-orientable closed connected surface of genus \( g\geqslant 1 \) and  consider \( \{x_1, \ldots, x_n\} \)  a collection of \( n\geqslant 1 \) distinguished points in \( N_g\).   The {\it (full)  mapping class group} $\calN_{g,n}$ is the group of isotopy classes of diffeomorphisms  $f:N_{g,n}\rightarrow N_{g,n}$, where $N_{g,n}:=N_g-\{x_1, \ldots, x_n\}$.  Since $\calN_{g,n}$ is virtually torsion-free, its virtual cohomological dimension $\vcd(\calN_{g,n})$ is a lower bound for $\gdfin(\calN_{g,n})$. Previous work of the authors  implies equality, and the existence of a model for the classifying space for proper actions of minimal dimension, in case when $n=1$ \cite[Corollary 1.2]{4Amigos_Spine_NO}, and for {\it pure}  mapping class groups \cite[Corollary 1.3]{4Amigos_Spine_NO}. Our main result is the following:

\begin{theorem}\label{Thm:Main} The proper geometric dimension of $\calN_{g,n}$ is 
\[ \underline{\gd}(\calN_{g,n})= \vcd(\calN_{g,n}), \]
 when $g=1$ and either $1\leqslant n\leqslant 3$ or $n\geqslant 7$;  $g=2$ and either $n=1$ or $n\geqslant 5$, or $g\geqslant 3$ and $n\geqslant 1$. %Moreover, there exists a  cocompact model for $\underline{E}\calN_{g,n}$ of dimension equal to $\vcd(\calN_{g,n})$. 
Furthermore, we obtain the following estimates for the remaining low-genus cases:\begin{enumerate}
    \item for $g=1$ and $4\leqslant n\leqslant 6$, we have
\[
\vcd(\calN_{1,n})\leqslant \underline{\gd}(\calN_{1,n})\leqslant 5.
\]
    \item for $g=2$ the following inequalities hold
\[
\vcd(\calN_{2,n})\leqslant \underline{\gd}(\calN_{2,n})\leqslant \vcd(\calN_{2,n})+1,\text{\ \ for $n=3,4$, and }
\]
\[
\vcd(\calN_{2,2})\leqslant \underline{\gd}(\calN_{2,2})\leqslant \vcd(\calN_{2,2})+2.
\]
   
\end{enumerate}
\end{theorem}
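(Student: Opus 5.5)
The plan is to use the standard criterion of Lück for virtually torsion-free groups. Since $\calN_{g,n}$ is virtually torsion-free, we fix a finite-index torsion-free subgroup; the natural choice is the pure mapping class group $\mathcal{PN}_{g,n}$, or a level-$m$ congruence subgroup of it with $m\geqslant 3$. Lück's criterion then gives
\[
\underline{\gd}(\calN_{g,n})\leqslant \max_{H}\bigl\{\underline{\gd}(W_{\calN_{g,n}}H)+|H|\bigr\},
\]
where the maximum runs over finite subgroups $H$ and $|H|$ is the length of a maximal chain of finite subgroups ending in $H$. Equivalently, it suffices to bound, for each finite $H<\calN_{g,n}$, the dimension of the fixed-point sets of a suitable model. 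I would organize the argument with the Martínez-Pérez / Connolly–Kozniewski style formula. By that formula, $\underline{\gd}(G)=\vcd(G)$ holds as soon as
\[
\vcd(N_G(H))+\text{(length of chains above $H$)}\leqslant \vcd(G)
\]
for every nontrivial finite subgroup $H$. In practice this reduces to proving $\vcd(N_{\calN_{g,n}}(H))<\vcd(\calN_{g,n})$ for every nontrivial finite $H$, with an additional margin needed to absorb chain lengths.

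The second step is to compute these quantities. For a finite subgroup $H$, Nielsen realization lets us realize $H$ as a group of automorphisms of a hyperbolic (dianalytic) structure on $N_{g,n}$. The normalizer $N(H)$ is then commensurable with the mapping class group of the quotient orbifold $N_{g,n}/H$, and its vcd is computed by the formulas for orientable or non-orientable punctured orbifolds. Ivanov's formula, together with its non-orientable analogue that was presumably recalled earlier in the paper, gives the vcd. For $n\geqslant 1$ and $g\geqslant 3$ it is $\vcd(\calN_{g,n})=2g+n-4$, with smaller adjusted values for $g=1,2$. Riemann–Hurwitz then gives a lower bound on the Euler characteristic of the quotient in terms of $|H|$, and hence an upper bound $\vcd(N(H))\leqslant$ roughly $\vcd(\calN_{g,n})/|H| + O(\text{branch points})$. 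This should give a strict drop large enough to absorb the chain lengths whenever the complexity is large, that is, for $g\geqslant 3$, or $g=2$ with $n\geqslant 5$, or $g=1$ with $n\geqslant 7$. The cases $n=1$ (with $g=2$) and $g=1$ with $n\leqslant 3$ are handled by the earlier corollary of \cite{4Amigos_Spine_NO} or by small explicit models; for $N_{1,n}$ with small $n$ the group is finite or virtually free.

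For the exceptional cases we only obtain upper bounds. Here we take the maximum over $H$ of $\vcd(N(H))+|H|$ directly. The worst offenders are the involutions and small cyclic or dihedral groups whose quotient orbifold is as complex as the original surface. An example is the orientation-type involutions permuting punctures in pairs on $N_{2,n}$ and $N_{1,n}$. These yield the stated bounds $\vcd+1$, $\vcd+2$ and $5$.

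The main obstacle will be the case analysis of finite subgroups in low genus. We need to enumerate which finite groups act on $N_{g,n}$ with which branch data, including actions that permute punctures or fix curves, since reflections in the non-orientable setting produce one-sided boundary behaviour. For each such group we must verify that the inequality holds. I would first bound everything via Riemann–Hurwitz uniformly, and only then treat individually the handful of $(g,n,H)$ where the uniform bound fails.
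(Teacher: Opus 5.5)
\textbf{Verdict: there is a genuine gap.} Your overall architecture matches the paper's: the criterion ``$\vcd(WF)+\lambda(F)\leqslant m$ for all finite $F$ implies $\cdfin\leqslant m$'' (\Cref{Conchita:criterion}), Nielsen realization, and a Birman--Hilden argument bounding $\vcd(WF)$ by a mapping class group of the quotient orbifold. But you state the criterion incorrectly in three ways.
\begin{enumerate}
\item Your first display is vacuous, since the $H=1$ term on the right is $\gdfin(\calN_{g,n})$ itself.
\item $\lambda(H)$ counts chains $1=H_0<\cdots<H_k=H$ below $H$, not above it.
\item The criterion bounds $\cdfin$, so you still need $\gdfin\leqslant\max\{3,\cdfin\}$. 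This is harmless here, since $\vcd\geqslant 3$ in all non-trivial cases.
\end{enumerate}

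The real gap is that the quantitative content of the theorem is replaced by a heuristic: $\vcd(N(H))$ is ``roughly'' $\vcd(\calN_{g,n})/|H|+O(\text{branch points})$ and ``should'' leave enough room. The inequality is tight or nearly tight in many cases, so constants matter. The mechanism that makes it work is also not identified. One needs
$\vcd(WF)\leqslant\vcd(\Mod(\Sigma^{b_c}_{g_F,b_m+e_F}))+n/|F|+\epsilon_F$: free marked points contribute at most $n/|F|$, and $\epsilon_F$ accounts for the extra jump of $\vcd$ when a first puncture is added to a closed surface. This is compared with $\vcd(\calN_{g,n})=\vcd(\calN_g)+n+1$ for $g\geqslant 3$.

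This splitting is indispensable because the closed-surface inequality really fails in low genus. For example, $C_2\times S_4<\calN_4$ with quotient $(0;+;[-];\{(2,4,6)\})$ has $\vcd(\Gamma^*_F)+\lambda(F)=\vcd(\calN_4)+2$ (\Cref{Prop:N4:HSSTN}). So for $g=3,4,5$ you need exact closed-surface data, fed into the threshold $n\geqslant\frac{|F|}{|F|-1}(k+\epsilon_F-1)$ of \Cref{Bound:WF:by:OF}:
\begin{enumerate}
\item for $g=3$, the groups acting on $N_3$ from \cite{BEM14}, with a Riemann--Hurwitz enumeration of signatures;
\item for $g=4,5$, the case analysis of \cite{HSSTN24}.
\end{enumerate}
For $g\geqslant 6$ the paper avoids new orbifold analysis altogether. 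It uses the Birman exact sequence, $\cd B_n(N_g)=n+1$, and the closed result of \cite{HSSTN24}. ``Bound uniformly, then treat a handful of exceptions'' supplies none of this.

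For $g=1,2$ the gap is more basic. Riemann--Hurwitz on the closed surface gives no bound on $|F|$ at all: $\chi(N_1)>0$, $\chi(N_2)=0$, and both surfaces carry arbitrarily large finite symmetry groups. So $\lambda(F)$ is uncontrolled unless you bound $|F|$ through the marked points. The paper lifts to the orientation double cover $S_{g-1,2n}$ and obtains:
\begin{enumerate}
\item $|F|\leqslant\max\{4n,60\}$ for $g=1$;
\item $F\cong(C_s\times C_t)\rtimes C_m$ with $st\mid 2n$ and $m\in\{1,2,3,4,6\}$ for $g=2$.
\end{enumerate}
The quotient of the closed surface is spherical or Euclidean, so $\Mod(\Sigma^{b_c}_{g_F,b_m+e_F})$ always has $\vcd$ zero and $\epsilon_F=0$. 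Together with the order bounds, this gives $\vcd(WF)+\lambda(F)\leqslant n/|F|+\Omega(|F|)$, which is then checked over the admissible orders.

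This also shows that your diagnosis of the exceptional cases is wrong. Involutions are harmless: for $F=C_2$ acting on $N_{2,n}$ one gets $\vcd(WF)+\lambda(F)\leqslant n/2+1\leqslant n$ when $n\geqslant 2$. The constants $\vcd+1$, $\vcd+2$ and $5$ instead come from large admissible orders with many prime factors. For example, $|F|=32$ for $\calN_{2,4}$ gives $4/32+5=41/8$, and $|F|=16$ for $\calN_{2,2}$ gives $2/16+4=33/8$. An argument built around involutions would not produce the stated bounds.
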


The computation and estimates of  the proper geometric dimension follow from  \Cref{Thm: MainProjectivePlane} for $g=1$, \Cref{Thm: MainKB} for $g=2$,  \Cref{prop:N3:marked:points} for $g=3$, \Cref{Thm:g:4:5:punctures} for $g=4,5$, and \Cref{prop:Genus6} for $g\geqslant 6$. These theorems are the main results of \Cref{sec:GenusOne,sec:GenusTwo,sec:genus-3,sec:GENUS4&5,sec:GENUS6} respectively, which gives an overview of the structure of the paper. %{\color{red} EXPLICAR AQUí PORQUE tener un {\it truncated Teichmüller space $\calT_{g,n}(\epsilon)$} para $\Gamma_{g,n}$  IMPLICA LA EXISTENCA DE  UN MODELO COCOMPACTO PARA $\underline{E}\calN_{g,n}$ Y ADAPTAR REDACCIÓN:}  {\color{blue} For $2g+n>2$, the Teichmüller space $\calT_{g,n}$ is known to be a model for $\underline E \Gamma_{g,n}$; see for instance \cite[Proposition 2.3]{WolpertJi} and \cite[Section 4.10]{Lu05}. In fact, 
%Ji and Wolpert proved in \cite[Theorem 1.2]{WolpertJi} that the {\it truncated} Teichmüller space $\calT_{g,n}(\epsilon)$ is a cocompact classifying space for proper actions for $\epsilon$ sufficiently small.} \comn{Falta notación y/o definir $\Gamma_{g,n}$} Hence, the {\it moreover} part of the main result follows from our computation of $\gdfin(\calN_{g,n})$ and  a result of Lück that we recall in \Cref{thm:Luck}. 

\subsection{Proper geometric dimension of braid groups on the projective plane} The {\it (full) $n$-th strand braid group $B_n(N_1)$ on the projective plane} is the fundamental group of the $n$th unordered configuration space of $N_1=\mathbb{RP}^2$.  For $n\geqslant 1$ these groups are virtually torsion free and have $\vcd(B_n(N_1))=\max\{0,n-2\}$ \cite[Theorem 5]{VcdBraid}. Moreover, the mapping class group $\calN_{1,n}$ is related to the  $B_n(N_1)$ by the following central extension (see for example \cite[Section 2.4]{BraidSurvey})
\[ 1\rightarrow C_2\rightarrow B_n(N_1)\rightarrow \calN_{1,n}\rightarrow 1,\]
when $n\geqslant 2$. %Note that any cocompact model for $\underline E \calN_{1,n}$ is, via the above short exact sequence, a cocompact model for $\underline E B_n(N_1)$. 
It follows from \cite[Lemma 5.8]{Lu05} that $\gdfin(B_n(N_1))=\gdfin(\calN_{1,n})$, and \Cref{Thm:Main} implies:

\begin{corollary} For $1\leqslant n \leqslant 3$ or $n\geqslant 7$, 
$\gdfin(B_n(N_1))=\vcd(B_n(N_1))=\max\{0,n-2\}$.% Moreover, there exists a cocompact model for $\underline E B_n(N_1)$ of dimension  $\vcd(B_n(N_1))$.
\end{corollary}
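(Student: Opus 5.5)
The corollary follows by combining three facts stated just before it. First, the central extension
\[ 1\rightarrow C_2\rightarrow B_n(N_1)\rightarrow \calN_{1,n}\rightarrow 1, \qquad n\geqslant 2,\]
has a finite kernel. By \cite[Lemma 5.8]{Lu05}, $\gdfin$ is unchanged under quotients by finite normal subgroups, so $\gdfin(B_n(N_1))=\gdfin(\calN_{1,n})$. The mechanism is that any model for $\underline{E}\calN_{1,n}$, with $B_n(N_1)$ acting through the quotient map, is a model for $\underline{E}B_n(N_1)$. Indeed, a subgroup $H<B_n(N_1)$ is finite if and only if its image in $\calN_{1,n}$ is finite, and $H$ has the same fixed set as its image. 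This gives $\gdfin(B_n(N_1))\leqslant\gdfin(\calN_{1,n})$; the reverse inequality is the content of the cited lemma.

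Second, \Cref{Thm:Main} with $g=1$ and $n\in\{1,2,3\}$ or $n\geqslant 7$ gives $\gdfin(\calN_{1,n})=\vcd(\calN_{1,n})$.

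Third, $\vcd$ is also insensitive to finite kernels. If $\Gamma<B_n(N_1)$ is torsion-free of finite index, then $\Gamma\cap C_2=1$, so $\Gamma$ maps isomorphically onto a finite-index subgroup of $\calN_{1,n}$. Hence $\vcd(\calN_{1,n})=\cd(\Gamma)=\vcd(B_n(N_1))$, and by \cite[Theorem 5]{VcdBraid} this equals $\max\{0,n-2\}$.

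Chaining these gives the claim for $n\geqslant 2$. The case $n=1$ needs separate treatment because the extension is not asserted there. In that case $B_1(N_1)=\pi_1(\mathbb{RP}^2)=C_2$ is finite, so a point is a model for $\underline{E}B_1(N_1)$. Thus $\gdfin=0=\vcd=\max\{0,-1\}$ directly.

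The only step needing care is the quoted invariance $\gdfin(B_n(N_1))=\gdfin(\calN_{1,n})$, which is exactly the cited result; everything else is bookkeeping.
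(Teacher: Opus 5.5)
Your proposal is correct and follows the same route as the paper: the central extension with kernel $C_2$ together with \cite[Lemma 5.8]{Lu05} gives $\gdfin(B_n(N_1))=\gdfin(\calN_{1,n})$, and \Cref{Thm:Main} for $g=1$ together with \cite[Theorem 5]{VcdBraid} finishes the argument. Your separate treatment of $n=1$ (where $B_1(N_1)=C_2$) and your explicit check that $\vcd$ is unchanged by the finite kernel are sound details that the paper leaves implicit.
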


\subsection{Overview of the proof and  the paper} To prove our main result \Cref{Thm:Main}, we obtain  $\gdfin(\calN_{g,n})$ by computing its algebraic counterpart the {\it proper cohomological  dimension} $\cdfin(\calN_{g,n})$. We do it by following the strategy of \cite{AMP14}; see also \cite{4Amigos_FullMCG}. This computation amounts to verifying that  the inequality $\vcd(WF) +\lambda (F)\leqslant \vcd(\calN_{g,n})$ holds  for any finite subgroup $F$ of $\calN_{g,n}$, and using \cite[Theorem 3.3]{AMP14} (see \Cref{Conchita:criterion} below). Here $WF$ denotes the {\it Weyl group} of $F$ in $\calN_{g,n}$  and $\lambda (F)$ is the {\it length} of $F$.  We recall the necessary definitions and results in \cref{sec:dimensions}.

In \Cref{sec:vcdBOUND}, we use Nielsen realization to obtain upper bounds for $\vcd(WF)$ in terms of the virtual cohomological dimension of the mapping class group of a suitable surface whose topological invariants come from the orbifold quotient $N_{g,n}/F$.  Most of the present paper deals with either computing or upper-bounding these invariants. On the other hand, $\lambda(F)$ can be bounded by the number of factors in the prime decomposition of $|F|$.

An independent and careful analysis of the finite subgroups of $\calN_{g,n}$ is required for  $1\leqslant g\leqslant 3$. This is done in \Cref{sec:GenusOne,sec:GenusTwo,sec:genus-3}, respectively. For genus $g=1,2$ and small $n$ we can only obtain upper bounds for $\underline{\gd}(\calN_{g,n})$ from our method. In the case of genus $g=3$, \cite[Proposition 1]{BEM14} gives the list of finite groups of diffeomorphisms acting  on a non-orientable surface $N_3$. From this list we obtain in \Cref{table:orbifolds-genus-3}, as part of our argument, the possible signatures of quotient orbifolds of finite group actions on $N_3$ that satisfy the Riemann--Hurwitz equation. In \Cref{sec:GENUS4&5},  we prove \Cref{Thm:g:4:5:punctures} for genus $g=4,5$ as a consequence of results from \cite{HSSTN24}.  Finally, for $g\geqslant 6$,  we follow the same strategy from \cite[Section 6]{4Amigos_FullMCG} in \Cref{sec:GENUS6} and obtain \Cref{prop:Genus6} as a straightforward application of results in \cite{HSSTN24} for $\calN_g$. \bigskip

\noindent{\bf IA use declaration:} ChatGPT was used to generate an initial version of the code presented in the Appendices, and to improve the upper bounds in \Cref{Lem:Adding:Marked:Points} and \Cref{Prop:Bound:WF}. These  were independently verified and modified by the authors to obtain the current text in the paper.  Finally, AI tools were used solely for proofreading and language editing throughout the paper.\medskip
 
\noindent{\bf Acknowledgments.}  The first author was funded by SECIHTI through the program \textit{Estancias Posdoctorales por México.} The third author's work was supported by UNAM \textit{Posdoctoral Program (POSDOC)}. All authors are grateful for the financial support of DGAPA-UNAM grant PAPIIT IN102426.

\section{Preliminaries}

\subsection{Proper geometric and cohomological dimensions}\label{sec:dimensions}
We recall here some notions of dimension defined for a given group $G$ that will be used in this paper.

A model for the {\it classifying space of $G$ for proper actions $\underbar{E}G$} is a $G$-CW-complex $X$  such that the fixed point set $X^H$ of a subgroup $H < G$ is contractible if $H$ is finite, and is empty otherwise. Such a model always exists and is unique up to proper $G$-homotopy. The {\it proper geometric dimension of $G$}, denoted
$\underline{\gd}(G)$, is the smallest integer $n$ for which there exists an $n$-dimensional model for $\underline EG$.

The proper cohomological dimension of $G$ is the Bredon cohomological dimension with
respect to the family of finite subgroups. More precisely, let {\sc Fin} be the collection of finite subgroups of $G$, and let
$\mathcal O_{\text{\sc Fin}}G$ be the orbit category whose objects are the
homogeneous $G$-spaces $G/H$ with $H\in\text{\sc Fin}$, and whose morphisms are
$G$-maps. A $\mathcal{O}_{\text{\sc Fin}}G$-module is a contravariant functor from $\mathcal{O}_{\text{\sc Fin}}G$ to the category of abelian groups. The
\emph{proper cohomological dimension} of $G$, denoted $\underline{\cd}(G)$, is
the projective dimension of the constant $\mathcal{O}_{\text{\sc Fin}}G$-module
$\mathbb Z_{\text{\sc Fin}}$. Equivalently, it is the largest integer $n$ for which
there is a $\mathcal{O}_{\text{\sc Fin}}G$-module $M$ with
$H^n_{\text{\sc Fin}}(G;M)\neq 0$.

The {\it cohomological dimension $\cd(H)$} of a group $H$ is the length of the shortest projective resolution, in the category of $H$-modules, for the trivial $H$-module $\mathbb{Z}$. If  $G$ is virtually torsion free, then $G$ contains a torsion-free subgroup $H$ of finite index, and the {\it virtual cohomological dimension of $G$} is defined to be $\vcd(G) = \cd(H)$.  This is well defined and it does not depend on the choice of the finite index torsion free subgroup $H$ of $G$ by a well-known theorem of Serre. For every virtually torsion free group $G$, we have the following inequalities 
\begin{equation}\label{eq:InDim}
    \vcd(G)\leqslant \cdfin(G) \leqslant \gdfin(G) \leqslant \max\{3, \cdfin(G)\},
\end{equation}
see for instance \cite[Theorem 2]{BLN01}. In particular, if $\underline{\cd}(G)\geqslant 3$, then $\underline{\gd}(G)=\underline{\cd}(G)$, and the only possible Eilenberg--Ganea phenomenon for proper actions occurs
when $\underline{\cd}(G)=2$ and $\underline{\gd}(G)=3$.

%The following theorem is a consequence of 
%\cite[Theorem 13.19]{Lu89}, as explained in \cite[Section 3]{AMP14}.
%\begin{theorem}\label{thm:Luck} Let $G$ be  a group with $\cdfin(G)=d\geqslant 3$. Then there exists a $d$-dimensional $\underbar{E}G$. If $G$ has a cocompact model for $\underbar{E}G$, then it also admits a cocompact $\underbar{E}G$ of dimension $d$.
%\end{theorem}

\vskip 10pt
 
Now, for $F$ a finite subgroup of $G$ we denote by $NF$ and $WF=NF/F$  the normalizer and the {\it Weyl group of} $F$, respectively. On the other hand, the \emph{length} $\lambda(F)$ of a finite group $F$ is the largest $i\geqslant 0$ for which  there exists a chain of subgroups \[1=F_0<F_1 < \cdots F_{i-1}< F_i=F.\]

For an integer $k\geqslant 2$, let  $\Omega(k)$ denote the number of prime factors of $k$, counted with multiplicity. The following lemma is a consequence of Lagrange's theorem and it will be useful in our computations below; see \cite[Lemma~3.1]{HSSTN24}.

\begin{lemma}\label{lem:UpperBoundLength}
    Let $G$ be a group and let $F$ be a finite subgroup. Then
    \[\lambda(F)\leqslant \Omega(|F|)\leqslant \log_2(|F|).\]
\end{lemma}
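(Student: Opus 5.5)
The plan is to prove the two inequalities separately. The first, $\lambda(F)\leqslant \Omega(|F|)$, comes from Lagrange's theorem applied along a strictly increasing chain of subgroups. The second, $\Omega(|F|)\leqslant \log_2(|F|)$, is elementary arithmetic. The ambient group $G$ plays no role: only the finiteness of $F$ is used.

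For the first inequality, fix a chain $1=F_0<F_1<\cdots<F_{i}=F$ of maximal length $i=\lambda(F)$. By Lagrange's theorem each index $[F_j:F_{j-1}]$ is an integer. Since the inclusions are proper, each index is at least $2$. The indices multiply, giving
\[
|F| = \prod_{j=1}^{i} [F_j:F_{j-1}].
\]
Since $\Omega$ is completely additive, $\Omega(ab)=\Omega(a)+\Omega(b)$, we get
\[
\Omega(|F|)=\sum_{j=1}^{i}\Omega([F_j:F_{j-1}])\geqslant \sum_{j=1}^{i}1=i=\lambda(F),
\]
because every integer $\geqslant 2$ has at least one prime factor.

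For the second inequality, write $|F|=p_1\cdots p_k$ with primes $p_l$, repeated according to multiplicity, so that $k=\Omega(|F|)$. Each $p_l\geqslant 2$, hence $|F|\geqslant 2^k$, and taking $\log_2$ gives $k\leqslant\log_2|F|$.

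The trivial group needs a separate remark, since $\Omega$ is only defined for $k\geqslant 2$. If $|F|=1$, then $\lambda(F)=0$, and we adopt the natural convention $\Omega(1)=0$; the inequality then reads $0\leqslant 0\leqslant\log_2 1=0$. There is no real obstacle here; the only point requiring care is that the chain must consist of proper inclusions, which is exactly what makes each index at least $2$.
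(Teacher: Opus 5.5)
Your proof is correct and follows the route the paper indicates: the paper gives no written proof, only noting that the lemma is a consequence of Lagrange's theorem (citing Hidber--S\'anchez Salda\~na--Trujillo-Negrete). Your argument spells this out: the proper indices along a chain are each at least $2$ and multiply to $|F|$, $\Omega$ is completely additive, and $|F|\geqslant 2^{\Omega(|F|)}$; the convention $\Omega(1)=0$ for the trivial group is a sensible addition.
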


Given \Cref{eq:InDim}, to obtain the proper geometric dimension $\gdfin(G)$ of the group of interest we will compute its algebraic counterpart $\cdfin(G)$  following the general strategy of \cite{AMP14}; see also  \cite{HSSTN24} and \cite{4Amigos_FullMCG}. The next key result is a  mild generalization of \cite[Theorem~3.3]{AMP14}.

\begin{theorem}\label{Conchita:criterion}
 Let $G$ be a virtually torsion-free group. Suppose that there exists $m\geqslant 0$ such that for any finite subgroup $F<G$ we have \[\vcd(WF)+ \lambda(F)\leqslant m\] then $\cdfin(G)\leqslant m$. In particular, if $m=\vcd(G)$ then $\cdfin(G)=\vcd(G)$.   
\end{theorem}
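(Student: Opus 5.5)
We will follow the proof of \cite[Theorem~3.3]{AMP14} and only check that it never uses more than the displayed inequality. The tool is the spectral sequence, or equivalently the Mayer--Vietoris-type reduction, of Martínez-Pérez (see also Lück). It bounds the Bredon cohomological dimension of a virtually torsion-free group by local data at the finite subgroups.

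Concretely, set $\mathcal{F}$ to be the poset of nontrivial finite subgroups of $G$. For an $\mathcal O_{\text{\sc Fin}}G$-module $M$, one filters $\underline{E}G$ (or works purely algebraically) by the ``length strata''. The Bredon cohomology $H^*_{\text{\sc Fin}}(G;M)$ is then computed from pieces of the form $H^{*}(WF; -)$, or more precisely cohomology of $NF$ with coefficients induced from $M(G/F)$, relative to the singular part. These pieces are shifted in degree by the length of chains $F_0<F_1<\cdots<F_i=F$ ending at $F$.

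The first key step is the result of Martínez-Pérez (used in \cite{AMP14}) for virtually torsion-free $G$:
\[
\cdfin(G)\leqslant \max_{F\in \text{\sc Fin}}\{\vcd(WF)+\lambda(F)\},
\]
where $\lambda(1)=0$ and $W1=G$. Two ingredients feed into it. The first is that each local piece has cohomological dimension at most $\vcd(NF)=\vcd(WF)$, because $F$ is finite and $NF$ is virtually torsion free. The second is that the chain of subgroups ending at $F$ contributes at most $\lambda(F)$ extra degrees, by induction on the length, passing from $F$ to the subgroups strictly containing it. We would cite this bound directly, since it is the content of \cite[Theorem~3.3]{AMP14}. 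The only difference there is that $m$ is taken to be $\vcd(G)$, and the argument never uses the specific value of $m$. So the first claim follows by replacing $\vcd(G)$ by an arbitrary $m$ throughout.

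For the ``in particular'' part, apply the first claim with $m=\vcd(G)$ to get $\cdfin(G)\leqslant \vcd(G)$. The reverse inequality $\vcd(G)\leqslant\cdfin(G)$ is \eqref{eq:InDim}, so equality holds.

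The main obstacle is making sure the cited argument is genuinely independent of $m$, in particular that $\vcd(WF)$ and not $\vcd(NF)$ or $\cd$ of something larger is what appears. To settle this we would use that $F$ is finite, so $\vcd(NF)=\vcd(WF)$. We would also use that $H^n$ with $n$ above the bound vanishes for each stratum, which is a uniform statement. Nothing depends on the global value $\vcd(G)$.
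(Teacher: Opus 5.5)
Your proposal is correct and matches the paper's own justification. The paper gives no separate argument; it presents the statement as a mild generalization of \cite[Theorem~3.3]{AMP14} whose proof does not use the specific value $\vcd(G)$. The case $F=1$, with $W1=G$ and $\lambda(1)=0$, already forces $\vcd(G)\leqslant m$, which you account for, and the ``in particular'' part follows from \eqref{eq:InDim} exactly as you say.

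One point to tighten is your remark that each local piece has cohomological dimension at most $\vcd(NF)$ ``because $NF$ is virtually torsion free''. Ordinary cohomology of $NF$ with arbitrary coefficients does not vanish above $\vcd(NF)$ when $NF$ has torsion, so this only holds for the specific relative pieces and coefficients in the cited argument. Nothing in your proof actually depends on that remark.
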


\subsection{Mapping class groups}\label{Sec:MCG}
Let \( \Sigma_{g}^b \) be a possibly non-orientable  connected surface of genus \( g \) with $b\geqslant 0$ boundary components.  For $n\geqslant 0$, take $X=\{x_1,x_2,\ldots,x_n\}$ a set of {\it marked points}, which may be empty, in the interior of the surface. The {\it (full) mapping class group} $\Mod({\Sigma_{g}^b};X)$  is the group of isotopy classes of self-diffeomorphisms (orientation-preserving if the surface is orientable) of $\Sigma_{g}^{b}$ which take the set of distinguished points $X$ to itself and fix the boundary components pointwise. It is isomorphic to the mapping class group $\Mod({\Sigma_{g,n}^b})$ of the {\it punctured} surface $\Sigma_{g,n}^b:=\Sigma_{g}^b-X$. The {\it pure mapping class group}  is the subgroup of mapping classes that fix each marked point in $X$. 
\begin{notation}
We write $S_{g}^b$  if the surface is orientable and $N_{g}^b$ if it is non-orientable, and we will frequently use the more compact notation for mapping class groups
\[\Gamma_{g,n}^{b}:=\Mod(S_{g,n}^{b})\text{\ \ \ \ \  and \ \ \ \ \ }\calN_{g,n}^{b}:=\Mod(N_{g,n}^{b}).\]
We also denote  by $P\Gamma_{g,n}^{b}$ and $P\calN_{g,n}^{b}$  the corresponding  pure mapping class groups. Whenever there are no marked points or no boundary components, we omit
the corresponding index from the notation; for instance,
$\calN_{g,n}=\calN_{g,n}^{0}$ and $\calN_g=\calN_{g,0}^{0}$.\end{notation}

The group $\Mod(\Sigma_{g,n}^b)$ is virtually torsion free, and its virtual cohomological dimension  was computed by Harer for orientable surfaces and by Ivanov for non-orientable surfaces; see \cite[Proposition~2.4]{HSSTN24} and references therein.

\
\[
\vcd(\Gamma_{g,n}^{b})=
\begin{cases}
\max\{0,b-1\}, & \text{if } g=0 \text{ and } n+b\leqslant 2,\\
n+2b-3, & \text{if } g=0 \text{ and } n+b\geqslant 3,\\
1, & \text{if } g=1 \text{ and } n=b=0,\\
n+2b, & \text{if } g=1 \text{ and } n+b\geqslant 1,\\
4g-5, & \text{if } g\geqslant 2 \text{ and } n=b=0,\\
4g+n+2b-4, & \text{if } g\geqslant 2 \text{ and } n+b\geqslant 1,
\end{cases}
\]\smallskip

\[
\vcd(\calN_{g,n}^{b})=
\begin{cases}
\max\{0,b-1\}, & \text{if } g=1 \text{ and } n+b\leqslant 1,\\
n+2b-2, & \text{if } g=1 \text{ and } n+b\geqslant 2,\\
n+2b, & \text{if } g=2,\\
2g-5, & \text{if } g\geqslant 3 \text{ and } n=b=0,\\
2g+n+2b-4, & \text{if } g\geqslant 3 \text{ and } n+b\geqslant 1.
\end{cases}
\]
By a direct inspection of the $\vcd$ formulas above we obtain the following inequalities.

\begin{lemma}\label{Lem:Adding:Marked:Points}
Take integers $g, s,b,r\geqslant 0$. The following inequalities hold:
\begin{enumerate}
    \item 
$\vcd(\Mod(\Sigma_{g,r+s}^{b}))\leqslant \vcd(\Mod(\Sigma_{g,r}^{b}))+s$\   for $r+b>0$ and $g\geqslant 0$,
 \item  $\vcd(\Mod(\Sigma_{g,s}))\leqslant \vcd(\Mod(\Sigma_{g}))+s+1$ \   for  $g\geqslant 0$,
\item $\vcd(\Gamma_{g,r+s}^{b})\leqslant \vcd(\Gamma_{g,r}^{b})+s$\   
    for $g=0,1$ and $r,b\geqslant 0$, 
\item  $\vcd(\calN_{g,r+s}^{b})\leqslant \vcd(\calN_{g,r}^{b})+s$ \    for $g=1,2$ and $r,b\geqslant 0$. 
\end{enumerate}
\end{lemma}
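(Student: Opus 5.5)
The plan is a direct case check against the two displayed $\vcd$ formulas. In each regime of those formulas the dependence on the number $n$ of marked points is either constant ($0$) or affine with slope $1$. So adding $s$ marked points raises $\vcd$ by at most $s$, except where the formula jumps from one regime to another. The only work is to handle these regime changes.

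For (1), fix $g,b$ with $r+b>0$; then both $r+b\geqslant 1$ and $r+s+b\geqslant 1$.
\begin{itemize}
\item Orientable, $g\geqslant 2$: both values lie in the branch $4g+n+2b-4$, so the difference is exactly $s$.
\item Orientable, $g=1$: both lie in the branch $n+2b$, so the difference is exactly $s$.
\item Orientable, $g=0$: if $r+b\geqslant 3$, the difference is exactly $s$. If $r+b\leqslant 2$, then $\vcd(\Gamma_{0,r}^b)=\max\{0,b-1\}$ and $\vcd(\Gamma_{0,r+s}^b)$ is either $\max\{0,b-1\}$ or $r+s+2b-3$. In the latter case $r+2b-3\leqslant b-1$, since $r+b\leqslant 2$, so the bound holds.
\item Non-orientable, $g\geqslant 3$ and $g=2$: a single branch in each case, difference exactly $s$.
\item Non-orientable, $g=1$: if $r+b\leqslant 1$, the left side is at most $r+s+2b-2\leqslant b-1+s\leqslant \max\{0,b-1\}+s$. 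If $r+b\geqslant 2$, the difference is exactly $s$.
\end{itemize}
Note that the genus $0$ non-orientable case does not occur in the $\vcd$ table, so I would treat $g\geqslant 1$ there.

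For (2), with $b=0$, $r=0$, the case $s=0$ is trivial. For $s\geqslant 1$, compare the closed value with the punctured one.
\begin{itemize}
\item Orientable $g\geqslant 2$: $4g+s-4=(4g-5)+s+1$.
\item Orientable $g=1$: $s\leqslant 1+s+1$.
\item Orientable $g=0$: $\vcd(\Gamma_{0,s})\leqslant\max\{0,s-3\}\leqslant s+1$.
\item Non-orientable $g\geqslant 3$: $2g+s-4=(2g-5)+s+1$.
\item Non-orientable $g=2$: equality $s=0+s$ up to the $+1$.
\item Non-orientable $g=1$: $\max\{0,s-2\}\leqslant s+1$.
\end{itemize}
In every case the constant $+1$ is exactly what absorbs the jump between the closed branch and the punctured branch.

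For (3) and (4), the only new situation beyond (1) is $r=b=0$. Here the closed values are $0,1$ for $\Gamma_0,\Gamma_1$ and $0,0$ for $\calN_1,\calN_2$, and I check them directly.
\begin{itemize}
\item $\vcd(\Gamma_{0,s})=\max\{0,s-3\}\leqslant s$.
\item $\vcd(\Gamma_{1,s})\in\{1,s\}$ is at most $1+s$.
\item $\vcd(\calN_{1,s})\leqslant\max\{0,s-2\}\leqslant s$.
\item $\vcd(\calN_{2,s})=s$.
\end{itemize}
The remaining subcases, those with $r+b>0$, are covered by (1).

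The main obstacle is simply the boundary/regime bookkeeping in the $g=0$ orientable and $g=1$ non-orientable cases, where the formula switches from $\max\{0,b-1\}$ to the affine branch. I handle these by the explicit inequalities above, which use $r+b\leqslant 2$ (respectively $r+b\leqslant 1$).
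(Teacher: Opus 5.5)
Your proposal is correct and follows the same route as the paper: a direct inspection of the $\vcd$ formulas, where the only care needed is at the regime changes for $\Gamma_{0,r}^{b}$ (when $r+b\leqslant 2$) and $\calN_{1,r}^{b}$ (when $r+b\leqslant 1$), and you bound these crossings more explicitly than the paper does. One small slip: in the non-orientable $g=1$, $r+b\leqslant 1$ case of (1), your claim that the left side is at most $r+s+2b-2$ fails when $s=0$ (for example $r=1$, $b=0$), but the inequality is trivial for $s=0$, so the proof is unaffected.
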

\begin{proof}
If $r+b>0$, then passing from $r$ to $r+s$ distinguished points either increases
the $\vcd$ exactly by $s$, or  one of the
following exceptional cases, where the $\vcd$ does not change, occurs: 
$\Gamma_{0,r}^{b}\text{ with }r+b\leqslant 2$, or $\calN_{1,r}^{b}\text{ with }r+b\leqslant 1$. Hence, the first inequality holds.

If $r=b=0$, the inequality can be checked directly from the formulas above. Notice that, in general, the formulas change  when passing from a closed surface to a punctured surface. That happens, for instance, in the cases  $\Gamma_{g}$ with $g\geqslant 1$ and $\calN_{g}$ with $g\geqslant 3$.

 For
$\Gamma_{0,r}^{b}$, the $\vcd$ is
$\max\{0,b-1\}$ when $r+b\leqslant 2$ and $r+2b-3$ when
$r+b\geqslant 3$. Then adding $s$ marked points increases the $\vcd$ by
at most $s$, including the case $r=b=0$. For $\Gamma_{1,r}^{b}$, the $\vcd$ is $1$ when
$r=b=0$ and $r+2b$ otherwise. Adding $s$ marked points increases the
dimension by at most $s$.

For $\calN_{1,r}^{b}$, the $\vcd$ is
$\max\{0,b-1\}$ when $r+b\leqslant 1$ and $r+2b-2$ when
$r+b\geqslant 2$. Then adding $s$ marked points increases the dimension by at
most $s$. Finally,  $\vcd(\calN_{2,r}^{b})=r+2b$, so adding $s$
marked points increases the dimension exactly by $s$.
\end{proof}

\subsection{Quotient orbifold  and its mapping class group} Let $F$ be a finite subgroup of the mapping class group $\modnn$. By the Nielsen realization problem for non-orientable surfaces \cite[Theorem~1]{CX24}, there exists a dianalytic structure on $N_g$ such that $F$ is isomorphic to a finite group of automorphisms of $N_g$ with respect to this structure, and such that the automorphisms fix the set of marked points. Denote by $O_F$ the {\it quotient orbifold} $N_g/F$ with {\it underlying surface} $\Sigma_F$. Then there is a regular branched cover
$ p \colon N_g \to O_F $
between orbifolds. 

Denote by $X=\{x_1,\ldots,x_n\}$ the set of marked points on $N_g$, and set $Y := p(X)$. We will use the following notation:
\begin{itemize}
    \item $B$ denotes the set of branched points of the branched cover $p\colon N_g\to O_F$.
    \item $g_F$ denotes the genus of $\Sigma_F$.
    \item $e_F$ denotes the number of elliptic points of $O_F$.
    \item $c_F$ denotes the number of corner points of $O_F$.
    \item $b_c$ denotes the number of boundary components of $\Sigma_F$ that contain at least one corner point.
  \item $b_m$ denotes the number of boundary components of $\Sigma_F$ that do not contain any corner points; equivalently, all points on such boundary components are mirror points.
    \item $b=b_m+b_c$ denotes the total number of boundary components of $\Sigma_F$.
    \item $n_F$ denotes the number of points of $Y$ that are neither elliptic points, corner points, nor mirror points.
\end{itemize}

Moreover, the {\it signature of $O_F$} is given by  \[\big(g_F;\pm;[m_1,m_2,\ldots,m_{e_F}]; \{ (n_{i,1},\ldots, n_{i,s_i}) , i=1,\ldots,b\}\big),\]
where we write $+$ if $\Sigma_F$ is orientable and $-$ if it is non-orientable,  the $m_i$'s denote the order of the elliptic points of $O_F$ for $i=1,2,\ldots,{e_F}$, and $n_{i,j}$ is the ramification index of the $j$th corner point in the $i$the boundary component for $i=1,\ldots,b$. 
We use the notation $(-)$ when the corresponding boundary component does not have corner points. If $O_F$ has no elliptic points, we write $[-]$, and if it has no boundary components, i.e. $b=0$, we write $\{-\}$.

The {\it mapping class group of the orbifold $O_F$}, that we denote by $\Gamma_F^*$ following \cite{HSSTN24},  is the group of isotopy classes of self-diffeomorphisms of $O_F$ that preserve the orbifold structure. More precisely, elliptic points of a given order  are mapped to elliptic points of the same order, mirror points to mirror points, and corner points to corner points of the same order. If, additionally, we require the self-diffeomorphism to preserve the set of marked points $Y$ we denote the corresponding group of isotopy classes by $\Gamma_F^*(Y)$. Observe that marked points may be non-singular (free) points, elliptic points, mirror points, or corner reflectors, and in all cases they are required to be sent to marked points of the same type.

In the definition of the mapping class group of a surface with
punctures, we can think of the punctures as boundaries that are not fixed pointwise by diffeomorphisms and where isotopies can move the points of these boundaries. On the other hand, whenever a diffeomorphism fixes pointwise a non-empty set of points in a boundary component, up to isotopy and up to a finite power, we can think that such a diffeomorphism is fixing  pointwise the boundary component. With this in
mind, we think of the underlying topological surface $\Sigma_F$ of the orbifold $O_F$ as a surface of genus $g_F$ with $b_c$ boundary components (the boundaries of $\Sigma_F$ that have at least one corner point) and $e_F + b_m +n_F$ punctures (or marked points).  Notice that a finite index subgroup of the orbifold mapping class group $\Gamma_F^*(Y)$ is a subgroup of 
\begin{equation}\label{eq:ModUnderlying}
\Mod\big(\Sigma_{g_F,b_m+e_F+n_F}^{b_c}\big)=
\begin{cases}
\calN_{g_F,b_m+e_F+n_F}^{b_c}, & \text{if } \Sigma_F \text{ is non-orientable},\\
\Gamma_{g_F,b_m+e_F+n_F}^{b_c}, & \text{if } \Sigma_F \text{ is orientable}.
\end{cases}
\end{equation}

\subsection{Weyl groups of finite subgroups of mapping class groups}\label{sec:vcdBOUND} 
In this section we obtain certain upper bounds for the $\vcd$ of Weyl groups of finite subgroups of the mapping class groups $\modnn$ that will be useful to prove our main result. We use the notation from the previous sections.

From \cite[Proposition 2.3 $\&$ Section 2.1]{MAHER} it follows that for any finite subgroup $F$ of $\Gamma_{g,n}$, its Weyl group $WF$ is commensurable with $\Gamma_{g_F,e_F+n_F}$. In the non-orientable case, the following inequality is enough for our computations below; see also \cite[Theorem 2.6]{HSSTN24}.

\begin{lemma}\label{lem:vcd:Weyl:Group}
Let $g+n-2>0$ and consider a finite subgroup $F$ of $\modnn$. The Weyl group $WF$ is isomorphic to a subgroup of $\Gamma_F^*(Y)$, and its virtual cohomological dimension is bounded above by
\[
\vcd(WF)
\leqslant
\begin{cases}
\vcd\bigl(\calN_{g_F,b_m+e_F+n_F}^{b_c}\bigr), & \text{if } \Sigma_F \text{ is non-orientable},\\
\vcd\bigl(\Gamma_{g_F,b_m+e_F+n_F}^{b_c}\bigr), & \text{if } \Sigma_F \text{ is orientable}.
\end{cases}
\]
\end{lemma}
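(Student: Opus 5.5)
The argument has two parts: first embed $WF$ into the orbifold mapping class group $\Gamma_F^*(Y)$, then bound the $\vcd$ of a finite index subgroup of $\Gamma_F^*(Y)$ via \eqref{eq:ModUnderlying}.

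For the embedding, fix $F<\modnn$. By the Nielsen realization theorem for non-orientable surfaces \cite[Theorem~1]{CX24} we realize $F$ as a group of dianalytic automorphisms of $N_g$ preserving $X$, with regular branched cover $p\colon N_g\to O_F$. Let $\phi\in NF$. Since $\phi$ normalizes $F$, the Teichmüller-theoretic argument used in the orientable case \cite[Proposition 2.3]{MAHER} (see also \cite[Theorem 2.6]{HSSTN24}) applies: $NF$ acts on the fixed set $\mathcal T(N_{g,n})^F$, which is identified with the Teichmüller space of the orbifold $O_F$ with the marked set $Y$. This fixed set is non-empty and contractible, and here the hypothesis $g+n-2>0$ is used, since it guarantees that $N_{g,n}$ is hyperbolic. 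Concretely, $\phi$ is represented by an $F$-equivariant diffeomorphism $\tilde\phi$, meaning $\tilde\phi F\tilde\phi^{-1}=F$ as groups of diffeomorphisms. Such a $\tilde\phi$ preserves $X$, and it descends to a diffeomorphism $\bar\phi$ of $O_F$. This $\bar\phi$ preserves the orbifold structure, because local isotropy groups and their orders are conjugation invariants, and it preserves $Y=p(X)$.

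Next we check that the homomorphism $NF\to\Gamma_F^*(Y)$, $\phi\mapsto[\bar\phi]$, is well defined with kernel exactly $F$. Well-definedness uses that isotopies of $O_F$ lift to $F$-equivariant isotopies of $N_g$ fixing the marked structure. For the kernel: if $\bar\phi$ is isotopic to the identity in the orbifold category, lift the isotopy to get that $\tilde\phi$ is isotopic to a deck transformation, i.e. an element of $F$. Conversely, elements of $F$ clearly map trivially. Hence $WF=NF/F$ injects into $\Gamma_F^*(Y)$. This lifting and descending argument is the main obstacle, and the cleanest route is to cite the non-orientable version in \cite[Theorem 2.6]{HSSTN24}, adding marked points, rather than redoing it. The only new input is that the lifts preserve $X$, which holds because $X=p^{-1}(Y)\cap X$ is a union of fibres intersected with an $F$-invariant set.

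For the dimension bound, forgetting the orbifold data turns $\Gamma_F^*(Y)$ into isotopy classes of diffeomorphisms of $\Sigma_F$. These preserve the set of elliptic points, the free marked points, the mirror-only boundary components (treated as punctures), and the boundary components with corners. Passing to the finite index subgroup that fixes each corner point, and hence, up to finite index as explained before \eqref{eq:ModUnderlying}, fixes those boundary components pointwise, we obtain a finite index subgroup of $\Gamma_F^*(Y)$ inside $\Mod\big(\Sigma_{g_F,b_m+e_F+n_F}^{b_c}\big)$. Therefore $WF$ has a finite index subgroup $H$ that is a subgroup of this mapping class group. Since $\vcd$ is monotone under passing to subgroups of virtually torsion-free groups, and is invariant under finite index, we get $\vcd(WF)=\vcd(H)\leqslant\vcd\big(\Mod(\Sigma_{g_F,b_m+e_F+n_F}^{b_c})\big)$. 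Splitting into the orientable and non-orientable cases of \eqref{eq:ModUnderlying} gives the two stated bounds.
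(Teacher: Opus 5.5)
\paragraph{The gap.} The step that fails is in your last paragraph. The paper's own proof makes exactly the same identification, so this is not a weakness of your route relative to it: when $Y$ contains mirror points, a finite index subgroup of $\Gamma_F^*(Y)$ need not embed in $\Mod\big(\Sigma_{g_F,b_m+e_F+n_F}^{b_c}\big)$.

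Mirror points of $Y$ are counted neither in $n_F$ nor anywhere else. Yet a marked point on a mirror-only boundary component $\partial$ prevents $\partial$ from being rotated. So the twist about a curve parallel to $\partial$ can survive in $\Gamma_F^*(Y)$, although it dies once $\partial$ is treated as a puncture; your forgetful map then has infinite kernel.

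In fact the stated inequality is false in this situation. Take $g=1$, $n=3$, the involution $r[x:y:z]=[x:y:-z]$ of $\mathbb{RP}^2$, $X=\{[1:0:0],[1:0:1],[1:0:-1]\}$, and $F=\langle r\rangle\cong C_2$. Then $O_F$ is a disk with mirror boundary and one cone point of order $2$, and $p([1:0:0])$ is a mirror point. So $g_F=b_c=0$ and $b_m=e_F=n_F=1$, and the claimed bound is $\vcd(\Gamma_{0,3})=0$.

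Now let $M\subset N_1$ be the $r$-invariant M\"obius band given by the image of $\{|z|\leqslant\epsilon\}\subset S^2$. Then $\partial M$ cuts $N_{1,3}$ into a once-punctured M\"obius band and a disk punctured at $[1:0:\pm1]$, so $T_{\partial M}$ has infinite order in $\calN_{1,3}$. Moreover $T_{\partial M}$ commutes with $r$, which acts on an annulus around $\partial M$ as a rotation by $\pi$. Hence $WF$ is infinite and $\vcd(WF)=1$.

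A repair is to count the $b_m'$ mirror-only components that meet $Y$ as boundary components fixed pointwise, that is, to bound $\vcd(WF)$ by $\vcd\big(\Mod\big(\Sigma_{g_F,b_m-b_m'+e_F+n_F}^{b_c+b_m'}\big)\big)$. This exceeds the stated bound by at most $b_m'$. Alternatively, assume that $Y$ contains no mirror points.

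\paragraph{The embedding step.} For $WF\hookrightarrow\Gamma_F^*(Y)$ you go through Teichm\"uller theory, whereas the paper invokes the Birman--Hilden property directly: via \cite{Earle} for $g\geqslant 3$, and for $g=1,2$ via \cite{Zieschang73} applied to the hyperbolic cover $N_g-X\to\Sigma_F-Y$. That cover requires $X=p^{-1}(Y)$, which holds because $X$ is $F$-invariant; your version, $X=p^{-1}(Y)\cap X$, is a tautology. Two further points of your sketch need correcting.
\begin{enumerate}
\item Well-definedness of $\phi\mapsto[\bar\phi]$ requires that isotopic fibre-preserving representatives be isotopic through fibre-preserving maps, which is precisely the Birman--Hilden property. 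Lifting orbifold isotopies is the converse direction and only identifies the kernel with $F$.
\item \cite[Theorem~2.6]{HSSTN24} is about the closed surfaces $N_g$, so adding marked points is not covered by that citation. This matters in particular for $g=1,2$, where $N_g$ is not hyperbolic and one must work with the punctured surface from the start.
\end{enumerate}
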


\begin{proof}
Suppose first that $g\geq 3$. From \cite[Corollary 1.5]{Earle} it follows that the covering map $p\colon N_g\to \Sigma_F$ has the Birman--Hilden property; see also \cite[Corollary~8.6 $\&$ end of
p.~20]{Zieschang73}.  Since the set of marked points in $N_g$ is $X$ and $Y=p(X)\subset \Sigma_F$, we obtain a short exact sequence 
\[
1 \to F \to \operatorname{S}\modnn \to \operatorname{L}\Gamma_F^*(Y) \to 1,
\]
where $\operatorname{S}\modnn$ is the subgroup of $\modnn$ consisting of
mapping classes admitting representatives that preserve the fibers of $p$, and
$\operatorname{L}\Gamma_F^*(Y)$ is the subgroup of $\Gamma_F^*(Y)$
consisting of mapping classes that lift under the branched cover $p$ and preserve the set $Y$.
Moreover, by \cite[Corollary~8.7 $\&$ end of p.~20]{Zieschang73} we have
$\operatorname{S}\modnn=NF$,  therefore
\[
WF=NF/F\cong \operatorname{L}\Gamma_F^*(Y).
\]
Then $WF$ is isomorphic to a subgroup of  $\Gamma_F^*(Y)$. Since,  the latter  has a finite index subgroup of the mapping class group $\Mod(\Sigma_{g_F,b_m+e_F+n_F}^{b_c})$ defined in \Cref{eq:ModUnderlying}, the inequality
for its $\vcd$ follows.

Now suppose that $g=1,2$. 
%Since $g+n-2>0$ (or $n\geq 1$ if $g=2$ and $n\geq 2$ if $g=1$). 
Since the action of $F$ preserves $X$ setwise, we have $X=p^{-1}(Y)$. Indeed, if $x\in X$, then the fiber $p^{-1}(p(x))$ is the $F$-orbit of $x$, which is contained in $X$. Consider the restriction $\widehat{p}:=p|_{N_g-X}\colon N_g-X\to \Sigma_F-Y$. This is a regular branched covering, and the surface $N_g-X$ is hyperbolic since $g+n-2>0$. Thus, by \cite[Corollary~8.6 $\&$ end of p.~20]{Zieschang73} $\widehat{p}$ satisfies the Birman--Hilden property, and the result follows by the same argument as above. 
%\comn{Falta referencia ya que Earle lo hace para orbifolds compactos y entiendo que Zieschang lo hace en general, pero veo que al final sólo incluye el caso COMPACTO. De momento deje separadas ambas pruebas, pero supongo que se puede quedar una sola, lo deje para ver cuál es la idea detrás.}

\end{proof}

Using the previous lemmas  we obtain the following results, where we take \begin{equation}\label{eq:ModUnderlying2}
\Mod\big(\Sigma_{g_F,b_m+e_F}^{b_c}\big)=
\begin{cases}
\calN_{g_F,b_m+e_F}^{b_c}, & \text{if } \Sigma_F \text{ is non-orientable},\\
\Gamma_{g_F,b_m+e_F}^{b_c}, & \text{if } \Sigma_F \text{ is orientable}.
\end{cases}
\end{equation}

\begin{proposition}\label{Prop:Bound:WF}
Let $g+n-2>0$ and consider a finite subgroup  $F$ of $\modnn$. Set
\[
\epsilon_F=
\begin{cases}
0, & \text{if } b_m+e_F+b_c>0,\\
0, & \text{if } \Sigma_F \text{ is orientable and } g_F\in\{0,1\},\\
0, & \text{if } \Sigma_F \text{ is non-orientable and } g_F\in\{1,2\},\\
1, & \text{otherwise}.
\end{cases}
\]
Then
\[
\vcd(WF) \leqslant \vcd\big(\Mod\big(\Sigma_{g_F,b_m+e_F}^{b_c}\big)\big) + \frac{n}{|F|} + \epsilon_F.
\]

\end{proposition}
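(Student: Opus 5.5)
The plan is to chain \Cref{lem:vcd:Weyl:Group} with \Cref{Lem:Adding:Marked:Points}, after first bounding $n_F$ in terms of $n/|F|$.

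\emph{Step 1: bound $n_F$.} Recall that $n_F$ counts the points of $Y=p(X)$ that are neither elliptic, corner, nor mirror points. Such a point $y$ lies in the regular part of $O_F$, so its fibre $p^{-1}(y)$ is a free $F$-orbit of exactly $|F|$ points. In the proof of \Cref{lem:vcd:Weyl:Group} we saw that $X=p^{-1}(Y)$. Hence these fibres are pairwise disjoint subsets of $X$, which gives
\[
n_F\,|F|\leqslant n, \qquad\text{that is,}\qquad n_F\leqslant \frac{n}{|F|}.
\]

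\emph{Step 2: reduce to the underlying surface.} By \Cref{lem:vcd:Weyl:Group}, $\vcd(WF)\leqslant \vcd\big(\Mod(\Sigma_{g_F,b_m+e_F+n_F}^{b_c})\big)$. It therefore suffices to show
\[
\vcd\big(\Mod(\Sigma_{g_F,b_m+e_F+n_F}^{b_c})\big)\leqslant \vcd\big(\Mod(\Sigma_{g_F,b_m+e_F}^{b_c})\big)+n_F+\epsilon_F.
\]
Combined with Step 1, this yields the statement.

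\emph{Step 3: compare $\vcd$'s via \Cref{Lem:Adding:Marked:Points}, with $s=n_F$.} We split according to the value of $\epsilon_F$.
\begin{enumerate}
\item If $b_m+e_F+b_c>0$, apply part (1) with $r=b_m+e_F$ and $b=b_c$. Its hypothesis $r+b>0$ holds, so the increase is at most $n_F$, and $\epsilon_F=0$.
\item If $r=b=0$ and $\Sigma_F$ is orientable with $g_F\in\{0,1\}$, part (3) gives the increase at most $n_F$.
\item If $r=b=0$ and $\Sigma_F$ is non-orientable with $g_F\in\{1,2\}$, part (4) gives the increase at most $n_F$.
\item In all remaining cases ($r=b=0$, with orientable $g_F\geqslant 2$ or non-orientable $g_F\geqslant 3$), $\epsilon_F=1$. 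Part (2) gives the increase at most $n_F+1$.
\end{enumerate}
This matches the definition of $\epsilon_F$ exactly.

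\emph{Expected subtleties.} Step 1 needs care about which points count toward $n_F$. Marked points landing on elliptic, mirror or corner points have smaller fibres, but they are not counted in $n_F$. They are already absorbed into $e_F$ and $b_m$ (or into boundary with corners) in the underlying-surface model of \Cref{eq:ModUnderlying}, so only free orbits are counted and the inequality is safe.

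One further point, on the $s=0$ edge of part (2): the only potential issue is $n_F=0$, where part (2) would give an unnecessary $+1$. This is harmless, since the claimed bound is only an upper bound. The main obstacle is thus just Step 1's orbit-counting justification; the rest is bookkeeping.
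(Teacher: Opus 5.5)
Your proposal is correct and follows essentially the same route as the paper. You bound $n_F\leqslant n/|F|$ by counting free $F$-orbits in $X$, reduce via \Cref{lem:vcd:Weyl:Group} to the underlying surface with $b_m+e_F+n_F$ punctures, and then apply parts (1)--(4) of \Cref{Lem:Adding:Marked:Points} with exactly the case split that defines $\epsilon_F$; the only difference is that you bound $n_F$ first while the paper does it last, which is immaterial.
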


\begin{proof} Set $r=b_m+e_F$, $b=b_c$, and $s=n_F$.   If $r+b>0$, then
\Cref{Lem:Adding:Marked:Points}(1) shows that adding the $n_F$ free marked points
increases the $\vcd$ by at most $n_F$. If $r=b=0$, part (2) of
the same lemma gives an upper bound of $n_F+1$. However, by parts (3) and (4) of \Cref{Lem:Adding:Marked:Points}, the additional unit
is not needed when $\Sigma_F$ is orientable of genus $0$ or $1$, or when $\Sigma_F$ is
non-orientable of genus $1$ or $2$. These are precisely the cases encoded
by $\epsilon_F$. Therefore
\[
\vcd\big(\Mod\big(\Sigma_{g_F,b_m+e_F+n_F}^{b_c}\big)\big) \leqslant \vcd\big(\Mod\big(\Sigma_{g_F,b_m+e_F}^{b_c}\big)\big) + n_F  + \epsilon_F.
\]
Finally, each of the $n_F$ free points in $Y$ is the image of an $F$-orbit in
$X$ with trivial stabilizer, and hence each such orbit has cardinality $|F|$.
Consequently $n_F\leqslant n/|F|$, and the desired inequality follows from \Cref{lem:vcd:Weyl:Group}.
\end{proof}

\begin{corollary}\label{Bound:WF:by:OF} 
Let $F$ be a non-trivial finite subgroup of $\modnn$, and let $g+n-2>0$.
If
\begin{itemize}
    \item there exists $k\geqslant 0$ such that $\vcd\big(\Mod\big(\Sigma_{g_F,b_m+e_F}^{b_c}\big)\big) + \lambda(F) \leqslant \vcd(\calN_g) + k$, and
    \item $n \geqslant \frac{|F|}{|F|-1}(k+\epsilon_F-1)$ with  $\epsilon_F$ defined as in \Cref{Prop:Bound:WF}, then
\end{itemize}
\[
\vcd(WF) + \lambda(F) \leqslant \vcd(\modnn).
\]
\end{corollary}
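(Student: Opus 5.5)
The plan is to combine the bound of \Cref{Prop:Bound:WF} with the first hypothesis, and then use the second hypothesis to absorb the contribution $n/|F|$ of the free marked points. By \Cref{Prop:Bound:WF}, since $F$ is a finite subgroup of $\modnn$ and $g+n-2>0$,
\[
\vcd(WF)+\lambda(F)\leqslant \vcd\big(\Mod\big(\Sigma_{g_F,b_m+e_F}^{b_c}\big)\big)+\lambda(F)+\frac{n}{|F|}+\epsilon_F .
\]
By the first hypothesis the right-hand side is at most $\vcd(\calN_g)+k+\frac{n}{|F|}+\epsilon_F$.

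Next I rewrite the second hypothesis. Since $F$ is non-trivial we have $|F|\geqslant 2$, so $\frac{|F|-1}{|F|}>0$. Multiplying $n\geqslant \frac{|F|}{|F|-1}(k+\epsilon_F-1)$ by this positive factor gives
\[
n\Big(1-\frac{1}{|F|}\Big)\geqslant k+\epsilon_F-1,
\qquad\text{equivalently}\qquad
k+\frac{n}{|F|}+\epsilon_F\leqslant n+1 .
\]
Hence $\vcd(WF)+\lambda(F)\leqslant \vcd(\calN_g)+n+1$.

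It remains to compare $\vcd(\calN_g)+n+1$ with $\vcd(\modnn)$ using Ivanov's formulas recalled in \Cref{Sec:MCG}. For $g\geqslant 3$ and $n\geqslant 1$ we have $\vcd(\calN_g)=2g-5$ and $\vcd(\modnn)=2g+n-4$. Thus $\vcd(\modnn)=\vcd(\calN_g)+n+1$ exactly, and the proof is complete in this range. Since $\vcd(WF)+\lambda(F)$ is an integer, one can also replace $n/|F|$ by the integer $n_F\leqslant \lfloor n/|F|\rfloor$ wherever this helps.

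The main obstacle is the low-genus range, and I would flag it rather than force it. For $g=1,2$ the formulas give
\[
\vcd(\calN_{2,n})=\vcd(\calN_2)+n
\qquad\text{and}\qquad
\vcd(\calN_{1,n})=\vcd(\calN_1)+n-2 \ \ (n\geqslant 2),
\]
so the final step above is off by one or three. For $g=2$ and small $k$ I would attempt integrality arguments: $n_F\geqslant 1$ forces $n\geqslant |F|\geqslant 2$, and for $k=1$ this gives $1+n_F\leqslant n$. However, for general $k$ this does not close the gap. So I expect the corollary to be intended, and used, only for $g\geqslant 3$, which is consistent with its role in \Cref{sec:GENUS4&5,sec:GENUS6}. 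The genera $g=1,2$ are treated separately in \Cref{sec:GenusOne,sec:GenusTwo}.
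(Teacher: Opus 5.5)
Your argument is exactly the paper's proof: apply \Cref{Prop:Bound:WF}, insert the first hypothesis, rewrite the second as $k+\epsilon_F+n/|F|\leqslant n+1$, and finish with $\vcd(\calN_g)+n+1\leqslant \vcd(\modnn)$. The paper takes this last step silently, although, as you observe, it holds (with equality) only for $g\geqslant 3$; that is precisely the range $g=3,4,5$ where the corollary is applied.

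Your caveat is well founded. For $g=1$ the statement genuinely fails: for $n=2$ the group $\calN_{1,2}$ is finite with $\vcd=0$, and any $F$ of prime order satisfies both hypotheses with $k=1$ and $\epsilon_F=0$, yet $\vcd(WF)+\lambda(F)=1$. For $g=2$ the conclusion can still be rescued, though not by your $k=1$ integrality trick alone. There $\vcd(\Mod(\Sigma_{g_F,b_m+e_F}^{b_c}))=\epsilon_F=0$, so $\vcd(WF)+\lambda(F)\leqslant n_F+\lambda(F)\leqslant n+1$. Equality would force $n=n_F|F|$ and $n_F(|F|-1)=\lambda(F)-1$. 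Since $\lambda(F)<|F|$, this gives $n_F=0$ and hence $n=0$, contradicting $g+n-2>0$.
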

\begin{proof}
Observe that the numerical assumption is equivalent to
$k+\epsilon_F+n/|F|\leqslant n+1$. 
By \Cref{Prop:Bound:WF} and the hypotheses we have
\begin{align*}
\vcd(WF)+\lambda(F) 
&\leqslant \vcd\big(\Mod\big(\Sigma_{g_F,b_m+e_F}^{b_c}\big)\big) + \frac{n}{|F|} + \epsilon_F + \lambda(F) &\\
&\leqslant \vcd(\calN_g) + k + \frac{n}{|F|} + \epsilon_F &\\
&\leqslant \vcd(\calN_g) + n + 1 \ \leqslant \vcd(\modnn).\\
\end{align*}
\end{proof}

%\begin{remark}\label{Rmk:Y:Branched:Points}
%\comn{Creo que ya no usamos este remark} Let $F<\modnn$ be a finite subgroup. Then, for any marked point $x\in X$ of $N_g$, we have $F\cdot x\subset X$. Consequently, if $|X|<|F|$, then the set of marked points $Y=p(X)$ of $\Sigma_F$ must be contained in the set $B$ of branched points of the branched cover $p:N_g\to O_F$.
%\end{remark}

\subsection{Other auxiliary lemmas for finite group actions on non-orientable surfaces}
For some of our computations in \Cref{sec:GenusOne,sec:GenusTwo}, it will be useful to identify finite subgroups  of  diffeomorphisms of a non-orientable surface with finite subgroups of diffeomorphisms of its orientable double cover. 

\begin{lemma}\label{lemma:DoubleCoverGroups}
Let $n\geqslant  0$ and $g\geqslant  1$. Consider any finite subgroup $F$ of  diffeomorphisms of $N_{g,n}$. Then $F$ is isomorphic to a finite subgroup of orientation-preserving diffeomorphisms of $S_{g-1,2n}$ whose elements commute with the deck transformation of the orientable double cover.    
\end{lemma}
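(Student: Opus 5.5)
We work with the orientation double cover $\pi\colon S_{g-1,2n}\to N_{g,n}$. Here $S_{g-1,2n}$ is the connected orientable surface of genus $g-1$ with $2n$ punctures, since each puncture of $N_{g,n}$ has two preimages. Let $\tau$ be the nontrivial deck transformation. It is a fixed-point-free, orientation-reversing involution.

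The plan is to lift $F$ to a group $\widetilde F$ of diffeomorphisms of $S_{g-1,2n}$, show that $\widetilde F=\widehat F\times\langle\tau\rangle$ with $\widehat F$ orientation-preserving, and then take $\widehat F$.

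\textbf{Lifting.} Let $f\in F$. Since $f$ is a diffeomorphism, $f$ preserves the orientation character $w_1\colon \pi_1(N_{g,n})\to \mathbb{Z}/2$. The orientation double cover is the cover associated to $\ker w_1$, so $f\circ\pi$ lifts to $S_{g-1,2n}$. It has exactly two lifts, $\tilde f$ and $\tau\tilde f$.

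A more canonical description uses the model of $S_{g-1,2n}$ as the set of pairs (point, local orientation). Then $f$ acts by $\tilde f(x,o)=(f(x),df_x(o))$. This lift is a diffeomorphism and commutes with $\tau(x,o)=(x,-o)$. Since $x\mapsto \tilde f$ is functorial, the set $\widetilde F=\{\tilde f,\tau\tilde f : f\in F\}$ is a group. It sits in an extension
\[
1\to\langle\tau\rangle\to\widetilde F\to F\to 1,
\]
and $\tau$ is central in $\widetilde F$.

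\textbf{Splitting via orientation.} Of the two lifts $\tilde f$ and $\tau\tilde f$, exactly one preserves the orientation of $S_{g-1,2n}$, because $\tau$ reverses orientation. Call it $\hat f$. The orientation homomorphism $\widetilde F\to\{\pm1\}$ has kernel $\widehat F=\{\hat f: f\in F\}$, which is therefore a subgroup of $\widetilde F$.

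The projection $\widehat F\to F$ is a homomorphism. It is surjective by construction. It is injective because the only nontrivial element of $\widetilde F$ over the identity of $N_{g,n}$ is $\tau$, and $\tau$ reverses orientation, so $\tau\notin\widehat F$.

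Hence $F\cong\widehat F$. The group $\widehat F$ is finite, consists of orientation-preserving diffeomorphisms of $S_{g-1,2n}$, and each of its elements commutes with $\tau$, since $\tau$ is central in $\widetilde F$.

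\textbf{Main obstacle.} The delicate point is showing that $\tau$ commutes with the lifts, since a general lift is only known to normalize the deck group. This follows because the deck group has order $2$, so conjugation by any lift must fix $\tau$. Alternatively, the canonical construction $(x,o)\mapsto(f(x),df(o))$ makes the commutation manifest.

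The cases $g=1$ (where $S_{0,2n}$ is a punctured sphere) and $n=0$ need no separate treatment, because the argument never uses hyperbolicity.
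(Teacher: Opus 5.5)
Your proposal is correct and follows essentially the same argument as the paper. Both lift $F$ to an extension of $F$ by the order-two deck group $\langle\tau\rangle$, note that $\tau$ is central because it is the unique nontrivial element of a normal subgroup of order two, and take the orientation-preserving subgroup, which meets $\langle\tau\rangle$ trivially and hence maps isomorphically onto $F$. (Small typo: you mean $f\mapsto\tilde f$ is functorial; in your canonical model $\tilde f$ is in fact already the orientation-preserving lift for the tautological orientation.)
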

\begin{proof}
By lifting the action of $F$ to the orientable double cover $\pi:S_{g-1,2n} \to N_{g,n}$, we obtain a finite group $\bar F$  of diffeomorphisms of $S_{g-1,2n}$ that fits in the short exact sequence\[1\to D\to \bar F \to F  \to 1,\] where $D$ is the group of deck transformations of $\pi$. Recall that $D$ is generated by an orientation reversing involution of $S_{g-1,2n}$ without fixed points. Every lift normalizes $D$, and $D$ has order two, then all the elements of $\bar F$ commute with the deck transformation. Denote by $\bar F_0$ the index $2$ subgroup of $\bar F$ consisting of all elements that preserve orientation. Since $D\cap \bar F_0$ is trivial, we get that $\bar F_0$ projects isomorphically onto $F$.       
\end{proof}

On the other hand, for $g\geqslant 3$ and $n\geqslant 1$, we have the {\it Birman exact sequence}  
\begin{equation}\label{eq:ses-full}
1 \to B_{n}(N_{g}) \to \modnn 
   \xrightarrow{\varphi} \calN_g \to 1,
\end{equation}
by \cite[Theorem 2.1]{KOR}. Here  $\varphi:\modnn\to\calN_g$ is the {\it forgetful} homomorphism and  $B_{n}(N_{g})$ is the {\it full} braid group on $n$ strands on $N_{g}$. Since $B_{n}(N_{g})$ is torsion-free, we may regard any finite subgroup $F$ of $\modnn$ as a finite subgroup of $\calN_g$. This will be useful in  \Cref{sec:GENUS4&5,sec:GENUS6}.

\begin{lemma}\label{lem:COMPARE} Let $g\geqslant 3$, $n\geqslant 1$, and consider the forgetful homomorphism $\varphi:\modnn\to\calN_g$.  Then for any finite subgroup $F$ of $\modnn$, the  subgroup  $\varphi(F)$  of $\calN_g$ is isomorphic to $F$.     
\end{lemma}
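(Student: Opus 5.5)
The kernel of $\varphi$ is the full braid group $B_n(N_g)$ from the Birman exact sequence \eqref{eq:ses-full}. The plan is to show that this kernel meets $F$ trivially. Then $\varphi|_F\colon F\to\varphi(F)$ is an injective surjection, hence an isomorphism.

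The whole argument reduces to one fact: $B_n(N_g)$ is torsion-free for $g\geqslant 3$. The group $\ker(\varphi|_F)=F\cap B_n(N_g)$ is a subgroup of the finite group $F$, so it is finite. A finite subgroup of a torsion-free group is trivial, so $\ker(\varphi|_F)=1$.

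For torsion-freeness, I will use that $B_n(N_g)$ is the fundamental group of the unordered configuration space $UC_n(N_g)$. When $N_g$ is aspherical, which holds for $g\geqslant 2$, the Fadell--Neuwirth fibrations show that the ordered configuration space $C_n(N_g)$ is aspherical and finite-dimensional. Its finite cover relation with $UC_n(N_g)$ then shows that $UC_n(N_g)$ is also a finite-dimensional $K(\pi,1)$. A group with a finite-dimensional classifying space has finite cohomological dimension, and such a group is torsion-free.

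The only delicate point is making sure the inclusion in \eqref{eq:ses-full} is really the point-pushing map with image $\ker\varphi$, as in \cite[Theorem 2.1]{KOR}; that identification is exactly what that theorem provides for $g\geqslant 3$. The paper in fact already asserts torsion-freeness just before the lemma, so the proof can cite it directly, with the asphericity argument above as justification if needed.
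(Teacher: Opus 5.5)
Your proof is correct and is essentially the paper's argument. The paper gives no separate proof; it only remarks just before the lemma that $\ker\varphi=B_n(N_g)$ is torsion-free, so $F\cap\ker\varphi$ is trivial and $\varphi|_F$ is injective. Your Fadell--Neuwirth asphericity argument is a valid justification of the torsion-freeness the paper takes for granted; the paper later cites $\cd(B_n(N_g))=n+1$ from \cite{MR3869010}, which also gives it.
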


\section{Genus 1}\label{sec:GenusOne}
In this section we discuss the case $g=1$ of our main result and prove \Cref{Thm: MainProjectivePlane}.

\begin{proposition}\label{propo: vcdOrbifoldProjectivePlane}
Let $n\geqslant 1$  and  let $F$ be a non-trivial finite subgroup of  diffeomorphisms of $N_{1,n}$. Then
\begin{itemize}
    \item[(a)] $\vcd\big(\Mod\big(\Sigma_{g_F,b_m+e_F}^{b_c}\big)\big)=0$, and
    \item[(b)]  $|F|\leqslant \max\{4n,60\}$ for $n\geqslant 3$.
\end{itemize}
\end{proposition}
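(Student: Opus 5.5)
The plan is to reduce to spherical geometry via \Cref{lemma:DoubleCoverGroups}. Lifting $F$ to the orientable double cover $S^2\to N_1$ and averaging a metric, we may assume that $F$ acts isometrically on the round $\mathbb{RP}^2$. Then $\bar F<O(3)$ contains $-I$, and $F\cong \bar F_0=\bar F\cap SO(3)$. So $F$ is cyclic $C_k$, dihedral $D_k$, or one of $A_4,S_4,A_5$, of orders $12,24,60$. In particular the quotient $O_F=N_1/F$ is a good spherical orbifold. By Riemann--Hurwitz,
\[
\chi^{orb}(O_F)=\chi(N_1)/|F|=1/|F|>0 .
\]

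For (a) we use the positivity of $\chi^{orb}(O_F)$ to list the possible orbifolds. Since $\chi(\Sigma_F)\geqslant \chi^{orb}(O_F)>0$, the underlying surface $\Sigma_F$ is $S^2$, $\mathbb{RP}^2$ or a disk. The case $S^2$ without mirrors is impossible, because then $O_F$ would be an orientable orbifold with the non-orientable cover $N_1$. We then write
\[
\chi^{orb}=\chi(\Sigma_F)-\sum_i\Bigl(1-\tfrac1{m_i}\Bigr)-\tfrac12\sum_{j}\Bigl(1-\tfrac1{n_j}\Bigr),
\]
where each term $1-\tfrac1{m_i}$ is at least $\tfrac12$.
\begin{itemize}
\item If $\Sigma_F=\mathbb{RP}^2$, then $e_F\leqslant 1$ and $b=0$, so we need $\vcd(\calN_{1,e_F})=0$.
\item If $\Sigma_F$ is a disk with mirror boundary and no corners, then $e_F\leqslant1$ and $b_m=1$, so we need $\vcd(\Gamma_{0,1+e_F})=0$.
\item If $\Sigma_F$ is a disk with corners, then $e_F\leqslant1$ and $b_c=1$, so we need $\vcd(\Gamma^1_{0,e_F})=\max\{0,0\}=0$.
\end{itemize}
In every case the $\vcd$ formulas of \Cref{Sec:MCG} give $0$, which proves (a).

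For (b), assume $n\geqslant 3$. If $F\in\{A_4,S_4,A_5\}$, then $|F|\leqslant 60$ and we are done. Otherwise $F=C_k$ or $D_k$ preserves an axis $\pm v$, whose image in $\mathbb{RP}^2$ is a single point $P$. Since $X$ is $F$-invariant with $|X|\geqslant 3$, $X$ contains a point $x\neq P$. It then suffices to prove that the stabilizer of any $x\neq P$ in $F$ has order at most $4$. This gives $|F\cdot x|\geqslant |F|/4$, hence $|F|\leqslant 4|F\cdot x|\leqslant 4n$.

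To bound the stabilizer, note that $\mathrm{Stab}_F(x)$ corresponds to the subgroup of $\bar F_0$ preserving the line $\pm w$ through $x$. This subgroup is finite and consists of rotations either about $w$ or by $\pi$ about axes perpendicular to $w$. Since $w\neq \pm v$, any rotation about $w$ in $\bar F_0\subset C_k$ or $D_k$ must be a half-turn, which forces $w\perp v$ in the dihedral case. Thus the stabilizer is a subgroup of $C_2\times C_2$, of order at most $4$.

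This case check for $C_k$ and $D_k$ is where care is needed; it is the main obstacle. In particular one must verify that no rotation of order greater than $2$ in these groups fixes a line other than the axis. The remaining steps are routine Riemann--Hurwitz bookkeeping.
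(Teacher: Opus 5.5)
Your argument is correct. Part (a) is close to the paper in spirit, but part (b) takes a genuinely different route.

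\textbf{Part (a).} The paper notes that $O_F$ is spherical and reads the invariants off Conway's complete list in \Cref{table:projective-plane-orbifolds}. That list harmlessly also contains the orientable quotients with underlying surface $S^2$, which you correctly rule out. You instead derive the short list directly from $\chi^{orb}(O_F)=1/|F|>0$.

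\textbf{Part (b).} The paper uses \Cref{lemma:DoubleCoverGroups} to view $F$ as a finite group of orientation-preserving diffeomorphisms of $S^2$ preserving the $2n$ lifted points. It then cites Stukow's classification of finite subgroups of $\Gamma_{0,2n}$. This is short and parallels the genus-two argument in \Cref{prop:FiniteGroupKlein}.

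You instead linearize the action into $SO(3)\cong O(3)/\{\pm I\}$ and use orbit--stabilizer. Away from the axis point $P$, the rotations fixing $w$ form a subgroup of order at most $2$, and those sending $w$ to $-w$ form at most one coset of it. So every such stabilizer has order at most $4$, and $|F|\leqslant 4|F\cdot x|\leqslant 4n$. You should state this coset step explicitly, since it is what actually gives the bound $4$. This route is elementary and avoids the external classification. It also only needs $X$ to contain a point other than $P$, so it already works for $n\geqslant 2$.

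\textbf{Linearization.} This step should be stated more carefully. Averaging only gives an $F$-invariant metric, not a round one. You need one of the following:
\begin{enumerate}
\item uniformization, using that on $\mathbb{RP}^2$ the curvature-one metric in a conformal class is unique and hence $F$-invariant;
\item the classical fact that finite group actions on $S^2$ are conjugate into $O(3)$.
\end{enumerate}
The paper relies on the same fact implicitly when it calls $O_F$ spherical.
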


\begin{proof}
Let $F$ be a non-trivial finite subgroup of diffeomorphisms of $N_{1,n}$. By capping the punctures, the action of $F$ on $N_{1,n}$ induces an action of $F$ on the closed projective plane $N_1=\mathbb{RP}^2$, and we may consider the quotient orbifold $O_F=N_1/F$.  By lifting this action to the universal covering, we obtain a group $\tilde F$ acting by diffeomorphisms on the sphere $S^2$ which is an extension of $F$ by the fundamental group of the projective plane, and such that $S^2/\tilde F$ is diffeomorphic to $O_F$. Hence $O_F$ is a spherical orbifold. We list the possible spherical orbifolds in \Cref{table:projective-plane-orbifolds}, together with its signature and the corresponding invariants. The last column gives the $\vcd$ of $\Mod\big(\Sigma_{g_F,b_m+e_F}^{b_c}\big)$, and the first statement follows.

{\scriptsize
\begin{table}[h]
\centering
\caption{Possible quotient orbifolds of finite group actions on $N_1=\mathbb{RP}^2$, see for instance \cite{Conway}}
\label{table:projective-plane-orbifolds}
\begin{tabular}{|c|c|c|c|c|c|c|c|}
\hline
 &  &  &  &  &  & &  \\
Signature & $\Sigma_F$ & $g_F$ & $b_c$ & $b_m$ & $e_F$ & $\Mod\big(\Sigma_{g_F,b_m+e_F}^{b_c}\big)$ & $\vcd$ \\
 &  &  &  &  &  & &  \\
\hline
$(0;+;[-];\{-\})$ & $S^2$ & $0$ & $0$ & $0$ & $0$ & $\Gamma^{0}_{0,0}$ & $0$ \\
\hline
$(0;+;[q,q];\{-\})$ & $S^2$ & $0$ & $0$ & $0$ & $2$ & $\Gamma^{0}_{0,2}$ & $0$ \\
\hline
$(0;+;[2,2,q];\{-\})$ & $S^2$ & $0$ & $0$ & $0$ & $3$ & $\Gamma^{0}_{0,3}$ & $0$ \\
\hline
$(0;+;[2,3,3];\{-\})$ & $S^2$ & $0$ & $0$ & $0$ & $3$ & $\Gamma^{0}_{0,3}$ & $0$ \\
\hline
$(0;+;[2,3,4];\{-\})$ & $S^2$ & $0$ & $0$ & $0$ & $3$ & $\Gamma^{0}_{0,3}$ & $0$ \\
\hline
$(0;+;[2,3,5];\{-\})$ & $S^2$ & $0$ & $0$ & $0$ & $3$ & $\Gamma^{0}_{0,3}$ & $0$ \\
\hline
$(0;+;[-];\{(-)\})$ & $D^2$ & $0$ & $0$ & $1$ & $0$ & $\Gamma^{1}_{0,0}$ & $0$ \\
\hline
$(0;+;[-];\{(q,q)\})$ & $D^2$ & $0$ & $1$ & $0$ & $0$ & $\Gamma^{1}_{0,0}$ & $0$ \\
\hline
$(0;+;[-];\{(2,2,q)\})$ & $D^2$ & $0$ & $1$ & $0$ & $0$ & $\Gamma^{1}_{0,0}$ & $0$ \\
\hline
$(0;+;[-];\{(2,3,3)\})$ & $D^2$ & $0$ & $1$ & $0$ & $0$ & $\Gamma^{1}_{0,0}$ & $0$ \\
\hline
$(0;+;[-];\{(2,3,4)\})$ & $D^2$ & $0$ & $1$ & $0$ & $0$ & $\Gamma^{1}_{0,0}$ & $0$ \\
\hline
$(0;+;[-];\{(2,3,5)\})$ & $D^2$ & $0$ & $1$ & $0$ & $0$ & $\Gamma^{1}_{0,0}$ & $0$ \\
\hline
$(0;+;[q];\{(-)\})$ & $D^2$ & $0$ & $0$ & $1$ & $1$ & $\Gamma^{1}_{0,1}$ & $0$ \\
\hline
$(0;+;[2];\{(q)\})$ & $D^2$ & $0$ & $1$ & $0$ & $1$ & $\Gamma^{1}_{0,1}$ & $0$ \\
\hline
$(0;+;[3];\{(2)\})$ & $D^2$ & $0$ & $1$ & $0$ & $1$ & $\Gamma^{1}_{0,1}$ & $0$ \\
\hline
$(1;-;[-];\{-\})$ & $\mathbb{RP}^2$ & $1$ & $0$ & $0$ & $0$ & $\calN^{0}_{1,0}$ & $0$ \\
\hline
$(1;-;[q];\{-\})$ & $\mathbb{RP}^2$ & $1$ & $0$ & $0$ & $1$ & $\calN^{0}_{1,1}$ & $0$ \\
\hline
\end{tabular}
\end{table}}

Let us prove the second statement. By \Cref{lemma:DoubleCoverGroups}, the group $F$ is isomorphic to a finite subgroup of orientation-preserving diffeomorphisms of $S^2$ with $2n$ marked points. From \cite[Theorem 3]{stukow} it follows that  $F$ must be isomorphic to a subgroup of either $C_{2n-1}$, the dihedral group $D_{2n}$ of order $4n$, $D_{2n-1}$, the alternating group $A_4$, the symmetric group $S_4$ or $A_5$. In the cyclic and dihedral cases, $|F|\leqslant 4n$, and  in the remaining cases $|F|\leqslant 60$. Hence $|F|\leqslant \max\{4n,60\}$.

\end{proof}

\begin{proposition}\label{prop:upperbound:ProjectivePlane}
Let $n\geqslant 3$ and let $F$ be a non-trivial finite subgroup of $\calN_{1,n}$.
Then
\begin{itemize}
    \item[(a)] $\vcd(WF)+\lambda(F)\leqslant \vcd(\calN_{1,n})=n-2$ for $n\geqslant 7$,
    \item[(b)] $\vcd(WF)+\lambda(F)\leqslant 5$ for $3\leqslant n\leqslant 6$.
\end{itemize}
\end{proposition}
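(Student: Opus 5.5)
We combine \Cref{Prop:Bound:WF} with \Cref{propo: vcdOrbifoldProjectivePlane} and \Cref{lem:UpperBoundLength}. Here $g=1$ and $n\geqslant 3$, so $g+n-2>0$ and \Cref{Prop:Bound:WF} applies to every non-trivial finite $F<\calN_{1,n}$. By Nielsen realization we regard $F$ as a finite group of diffeomorphisms of $N_{1,n}$. By \Cref{propo: vcdOrbifoldProjectivePlane}(a), $\vcd(\Mod(\Sigma_{g_F,b_m+e_F}^{b_c}))=0$.

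We next check that $\epsilon_F=0$ in every case of \Cref{table:projective-plane-orbifolds}. Either $b_m+e_F+b_c>0$, or $\Sigma_F$ is $S^2$ (orientable, $g_F=0$), or $\Sigma_F$ is $\mathbb{RP}^2$ (non-orientable, $g_F=1$). Hence
\[
\vcd(WF)+\lambda(F)\leqslant \frac{n}{|F|}+\log_2|F|.
\]
Set $h(x)=n/x+\log_2 x$. It is convex in $x>0$. On any interval $[2,M]$ it is therefore maximized at an endpoint, and by \Cref{propo: vcdOrbifoldProjectivePlane}(b) we may take $M=\max\{4n,60\}$.

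For part (a), with $n\geqslant 7$, the endpoint $x=2$ gives $n/2+1\leqslant n-2$, since $n\geqslant 6$. The endpoint $x=M$ needs a short case split. If $4n\geqslant 60$, i.e. $n\geqslant 15$, then $h(4n)=1/4+2+\log_2 n\leqslant n-2$. If $7\leqslant n\leqslant 14$, then $h(60)=n/60+\log_2 60\approx n/60+5.91$. This is $\leqslant n-2$ only when $n\geqslant 8$, so the case $n=7$ fails the crude bound.

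To close this gap we would replace $\log_2|F|$ by $\Omega(|F|)$ and use the actual group list from the proof of \Cref{propo: vcdOrbifoldProjectivePlane}(b). For $A_5$ we have $\lambda(A_5)\leqslant\Omega(60)=4$, since $A_5$ is simple its chains have length at most $3$ or $4$. For $S_4$ we have $\Omega(24)=4$, and for $A_4$ we have $\Omega(12)=3$. Combined with $n/|F|<1$ (so it contributes at most $\lfloor n/|F|\rfloor$, as $n_F$ is an integer) this gives at most $4\leqslant n-2$. For cyclic and dihedral groups of order $|F|\leqslant 4n\leqslant 56$ we use $\Omega(|F|)\leqslant\log_2|F|$ together with the integrality of $n_F\leqslant\lfloor n/|F|\rfloor$, and verify the finitely many values $7\leqslant n\leqslant 14$ directly. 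This finite check is the step most likely to need care.

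For part (b), with $3\leqslant n\leqslant 6$, the same bound gives $\lfloor n/|F|\rfloor+\Omega(|F|)$. If $|F|\leqslant 6$, then $\Omega(|F|)\leqslant 2$ and $\lfloor n/|F|\rfloor\leqslant 3$, so the total is at most $5$. If $|F|>n$, the first term vanishes. For cyclic and dihedral groups $|F|\leqslant 4n\leqslant 24$, so $\Omega\leqslant 4$. For $A_4$, $S_4$ and $A_5$ we have $\Omega\leqslant 4$. In every case the total is at most $5$.

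The main obstacle is the finite case check: integrality of $n_F$ together with $\Omega$ in place of $\log_2$ must suffice for the small values of $n$ around $7$ to $14$. If it does not, we would sharpen the argument using the subgroup restrictions from \cite[Theorem 3]{stukow}, noting for instance that $C_{2n-1}$ has odd order.
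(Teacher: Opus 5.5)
Your argument is correct and is essentially the paper's. Both reduce, via \Cref{Prop:Bound:WF} with $\epsilon_F=0$ and the vanishing $\vcd$ of the underlying mapping class group, to bounding $n/|F|+\Omega(|F|)$ (or $n/|F|+\log_2|F|$) over $2\leqslant |F|\leqslant \max\{4n,60\}$. The only difference is how this numerical inequality is checked. The paper verifies $n/|F|+\Omega(|F|)<n-1$ (resp.\ $<6$) by computer for every order in the range when $n\leqslant 30$, and uses $n/2+\log_2(4n)<n-1$ for $n\geqslant 31$. You instead evaluate at the endpoints of the interval and handle the bad case by hand using the group list.

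There are two small corrections.

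\emph{Convexity.} The function $h(x)=n/x+\log_2 x$ is not convex on $x>0$, because $h''(x)=2n/x^3-1/(x^2\ln 2)<0$ for $x>2n\ln 2$. What you need, and what is true, is that $h'$ changes sign exactly once, at $x=n\ln 2$. So $h$ decreases and then increases, and its maximum on an interval is still attained at an endpoint.

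\emph{Threshold.} The inequality $h(60)\leqslant n-2$ only holds for $n\geqslant 9$; at $n=8$ one gets $h(60)\approx 6.04$. So already at $n=8$ you need integrality, i.e.\ $h<n-1$.

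With these fixes, the endpoint bound handles every $n\geqslant 8$, and the finite check you defer reduces to $n=7$. That case closes immediately. Every admissible $F$ other than $A_5$ has order at most $4n=28$, and $h(2)=4.5$, $h(28)\approx 5.06<6$. For $A_5$ the bound is $\lfloor 7/60\rfloor+\Omega(60)=4$.

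In fact the group list is unnecessary, both here and in part (b). On $[2,60]$ one has $\Omega\leqslant 5$, with equality only at $32$ and $48$. This gives at most $7/32+5<6$ for $n=7$, and at most $5$ for $n\leqslant 6$. This is exactly what the paper's computation records.
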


\begin{proof}
Let $F$ be a finite non-trivial subgroup of $\calN_{1,n}$. By Nielsen
realization, $F$ can be realized as a finite subgroup of diffeomorphisms of
$N_{1,n}$ and we can consider the orbifold $O_F=N_{1,n}/F$. Since $\epsilon_F=0$ for all the underlying surfaces $\Sigma_F$  in
\Cref{table:projective-plane-orbifolds}, it follows from \Cref{Prop:Bound:WF} that
\[
\vcd(WF)+\lambda(F)
\leqslant \vcd\big(\Mod\big(\Sigma_{g_F,b_m+e_F}^{b_c}\big)\big)+\frac{n}{|F|}+\lambda(F).
\]
On the other hand, by \Cref{propo: vcdOrbifoldProjectivePlane}(a) and \Cref{lem:UpperBoundLength} the following inequality holds
\[
\vcd\big(\Mod\big(\Sigma_{g_F,b_m+e_F}^{b_c}\big)\big)+\frac{n}{|F|}+\lambda(F)
\leqslant \frac{n}{|F|}+\Omega(|F|).
\]

By \Cref{propo: vcdOrbifoldProjectivePlane}(b), it is enough to obtain an upper bound of $\frac{n}{|F|}+\Omega(|F|)$  for the integers
$2\leqslant |F|\leqslant \max\{4n,60\}$. The computation in
\cref{Appendix:code:projective-plane} shows that for all such values of $|F|$
\[
\frac{n}{|F|}+\Omega(|F|)< n-1,
\]
when $7\leqslant n\leqslant 30$. On the other hand, for $n\geqslant 31$, we have
$2\leqslant |F|\leqslant 4n$ and,
\[
\frac{n}{|F|}+\Omega(|F|)
\leqslant \frac{n}{2}+\log_2(4n)<n-1.
\]
Observe that the last inequality does hold for every $n\geqslant 31$: the function
$x/2-1-\log_2(4x)$ is increasing for $x\geqslant 3$ and is positive at $x=31$.
Since $\vcd(WF)+\lambda(F)$ is an integer, this proves the first assertion.

The same computation gives the uniform bound
$\frac{n}{|F|}+\Omega(|F|)< 6$ for $3\leqslant n\leqslant 6$, and hence the second
assertion follows again since $\vcd(WF)+\lambda(F)$ is an integer.\end{proof}

From \Cref{prop:upperbound:ProjectivePlane} and \Cref{Conchita:criterion} we
obtain the following estimates for the proper cohomological and geometric
dimensions of $\calN_{1,n}$.

\begin{theorem}\label{Thm: MainProjectivePlane}
For $n\leqslant 3$ or $n\geqslant 7$,
\[
\vcd(\calN_{1,n})=\underline{\cd}(\calN_{1,n})=\underline{\gd}(\calN_{1,n}).
\]
Moreover, for $4\leqslant n\leqslant 6$ we have
\[
\vcd(\calN_{1,n})\leqslant \underline{\cd}(\calN_{1,n})
\leqslant \underline{\gd}(\calN_{1,n})
\leqslant 5.
\]
\end{theorem}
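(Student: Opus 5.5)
The plan is to combine \Cref{prop:upperbound:ProjectivePlane} with \Cref{Conchita:criterion} and the inequalities \eqref{eq:InDim}, and to treat the small values of $n$ separately. The case $n\geqslant 3$ is nearly immediate. For $n\geqslant 7$, part (a) of \Cref{prop:upperbound:ProjectivePlane} gives $\vcd(WF)+\lambda(F)\leqslant \vcd(\calN_{1,n})$ for every non-trivial finite $F$. For $F$ trivial we have $WF=\calN_{1,n}$ and $\lambda(F)=0$, so the inequality holds trivially. \Cref{Conchita:criterion} with $m=\vcd(\calN_{1,n})$ then yields $\cdfin(\calN_{1,n})=\vcd(\calN_{1,n})=n-2\geqslant 5$. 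Since this exceeds $3$, \eqref{eq:InDim} gives $\gdfin=\cdfin$. For $4\leqslant n\leqslant 6$, part (b) gives the bound $5$ for non-trivial $F$. For trivial $F$ we get $\vcd(\calN_{1,n})=n-2\leqslant 4$. So $\cdfin\leqslant 5$, and \eqref{eq:InDim} gives $\gdfin\leqslant\max\{3,\cdfin\}\leqslant 5$, with $\vcd$ as the lower bound.

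The remaining cases are $n\leqslant 3$, which I take to mean $1\leqslant n\leqslant 3$ (possibly also $n=0$). For these, $\vcd(\calN_{1,n})=\max\{0,n-2\}\in\{0,1\}$. The bound $5$ from part (b) is useless here, so the point is to show that $\gdfin$ equals $0$ or $1$ directly, by finding a small model.

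\textbf{Case $n\in\{0,1,2\}$.} I expect $\calN_{1,n}$ to be finite: $\calN_1$ and $\calN_{1,1}$ are trivial, and $\calN_{1,2}$ is finite (of order $4$, or at least finite, via $B_2(\mathbb{RP}^2)$, which is the dicyclic group of order $16$, modulo its centre). A finite group has a point as a model for $\underline{E}G$, so $\gdfin=0=\vcd$.

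\textbf{Case $n=3$.} Here $\vcd=1$. I would argue that $\calN_{1,3}$ is virtually free, since it is the quotient of $B_3(\mathbb{RP}^2)$ by its centre and $\vcd=1$. A finitely generated virtually free group acts properly and cocompactly on a tree (Karrass--Pietrowski--Solitar), and that tree is a $1$-dimensional model for $\underline{E}G$. This gives $\gdfin=\cdfin=1$.

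Alternatively, one could try to make the general machinery work at $n=3$. This would mean checking directly that $\vcd(WF)+\lambda(F)\leqslant 1$, using the finite list of possible $F$ from \Cref{propo: vcdOrbifoldProjectivePlane}(b) (subgroups of $D_6$, $C_5$, $D_5$, $A_4$, $S_4$, $A_5$) and the fact that $n/|F|<1$ for $|F|>3$. I expect this route to fail for groups of large length: part (b) already shows that $\lambda(F)$ can be large, for instance $\lambda(S_4)=4$. That is why I would rely on virtual freeness instead.

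\textbf{Main obstacle.} The main obstacle is the case $n=3$: justifying cleanly that $\vcd=1$ forces a $1$-dimensional model. The key input I would use is Dunwoody's theorem that $\cdfin\leqslant 1$ if and only if the group acts properly on a tree. For a virtually torsion-free group with $\vcd=1$, this holds by Stallings--Swan plus Karrass--Pietrowski--Solitar.
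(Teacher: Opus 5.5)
Your proof is correct and follows the paper's argument: finiteness for $n\leqslant 2$, virtual freeness for $n=3$, and \Cref{prop:upperbound:ProjectivePlane} combined with \Cref{Conchita:criterion} and \eqref{eq:InDim} for $n\geqslant 4$. Your explicit treatment of the trivial subgroup and of why virtual freeness yields a tree model fills in details the paper leaves implicit. One slip does not affect the argument: $B_2(\mathbb{RP}^2)$ modulo its centre is dihedral of order $8$, so $\calN_{1,2}$ has order $8$, not $4$; only its finiteness is used.
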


\begin{proof}
For $n\in \{0,1,2\}$ the group $\calN_{1,n}$ is finite and all the dimensions are 0. For $n=3$, the group $\calN_{1,n}$ is virtually free, and therefore $\vcd(\calN_{1,n})=\underline{\cd}(\calN_{1,n})=\underline{\gd}(\calN_{1,n})=1$.

If $n\geqslant 7$, then \Cref{prop:upperbound:ProjectivePlane}(a) and
\Cref{Conchita:criterion} imply
$\underline{\cd}(\calN_{1,n})=\vcd(\calN_{1,n})=n-2$. Since $n-2\geqslant 5$, the
inequality $\underline{\gd}(G)\leqslant \max\{3,\underline{\cd}(G)\}$ from \Cref{eq:InDim}
gives
$\underline{\gd}(\calN_{1,n})=\underline{\cd}(\calN_{1,n})$.

For $4\leqslant n\leqslant 6$, the upper bound from
\Cref{prop:upperbound:ProjectivePlane}(b) and \Cref{Conchita:criterion} give
$\underline{\cd}(\calN_{1,n})\leqslant 5$, and the stated lower and upper bounds for
$\underline{\gd}$ follow again from \Cref{eq:InDim}.
\end{proof}

\section{Genus \texorpdfstring{$2$}{2}}\label{sec:GenusTwo}
In this section we prove \Cref{Thm: MainKB}, which gives the case $g=2$ of our main theorem.

\begin{proposition}\label{propo: vcdOrbifoldKleinBottle}
    Let $n\geqslant 0$, and let $F$ be a non-trivial finite subgroup of  diffeomorphisms of $N_{2,n}$. Then $\vcd\big(\Mod\big(\Sigma_{g_F,b_m+e_F}^{b_c}\big)\big)=0$.
\end{proposition}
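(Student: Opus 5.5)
I would argue exactly as for \Cref{propo: vcdOrbifoldProjectivePlane}(a). First, cap the punctures of $N_{2,n}$ so that the action of $F$ extends to a finite group action on the Klein bottle $N_2$. Then consider the quotient orbifold $O_F=N_2/F$.

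\emph{Step 1: $O_F$ is Euclidean.} Lifting the action to the universal cover $\mathbb{R}^2$ produces a group $\tilde F$, an extension of $F$ by $\pi_1(N_2)$. This group acts properly and cocompactly on $\mathbb{R}^2$ and, after choosing a flat metric, by isometries. Hence $O_F\cong\mathbb{R}^2/\tilde F$ is a Euclidean 2-orbifold, i.e.\ its orbifold Euler characteristic is zero. Since $\tilde F$ contains the translation lattice coming from $\pi_1(N_2)$ with finite index, $\tilde F$ is one of the 17 wallpaper groups. The possible orbifolds are therefore the 17 Euclidean orbifolds in Conway's list \cite{Conway}:
\begin{itemize}
\item the torus and Klein bottle,
\item $(0;+;[2,2,2,2];\{-\})$, $[3,3,3]$, $[2,4,4]$, $[2,3,6]$ on $S^2$,
\item the annulus and M\"obius band with mirror boundaries,
\item the disc orbifolds $*2222$, $*333$, $*442$, $*632$, $2{*}22$, $3{*}3$, $4{*}2$, $22{*}$,
\item $\mathbb{RP}^2$ with two cone points of order $2$, $(1;-;[2,2];\{-\})$.
\end{itemize}
Not all of these need arise from actions on $N_2$. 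However, since we only need an upper bound on the relevant $\vcd$, it suffices to check every Euclidean orbifold; this avoids deciding which signatures actually occur.

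\emph{Step 2: tabulate.} For each of these signatures I would record $g_F$, $b_c$, $b_m$ and $e_F$, as in \Cref{table:projective-plane-orbifolds}. I would then read off $\vcd(\Mod(\Sigma_{g_F,b_m+e_F}^{b_c}))$ from the formulas of Harer and Ivanov. The cases are:
\begin{itemize}
\item Torus: $\Gamma_{1,0}$, with $\vcd=1$.
\item Klein bottle: $\calN_{2,0}$, with $\vcd=0$.
\item Sphere with cone points: $\Gamma_{0,4}$ gives $\vcd=1$, and $\Gamma_{0,3}$ gives $0$.
\item Annulus with two mirror boundaries: $\Gamma_{0,2}$, with $\vcd=0$.
\item M\"obius band with mirror boundary: $\calN_{1,1}$, with $\vcd=0$.
\item Discs with corners, e.g.\ $*2222$: $\Gamma_{0,0}^1$, with $\vcd=0$.
\item $2{*}22$ and $3{*}3$: $\Gamma_{0,1}^1$, with $\vcd=0$.
\item $4{*}2$: here $e_F=1$, giving $\Gamma_{0,1}^1$ and $\vcd=0$.
\item $22{*}$: a mirror boundary and two cone points, giving $\Gamma_{0,3}$ and $\vcd=0$.
\item $(1;-;[2,2];\{-\})$: $\calN_{1,2}$, with $\vcd=0$.
\end{itemize}

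\emph{Step 3: the main obstacle.} The two Euclidean orbifolds with $\vcd=1$ are the torus $o$ and the pillowcase $2222$. Both have orientable underlying surface and carry no mirrors. I would exclude them by an orientation argument. If $O_F$ is an orbifold with no reflector points whose underlying surface is orientable, then the preimage of the orientation double cover of $O_F$ is governed by orientation characters. Concretely, the orientation character of $N_2$ factors through $\pi_1^{orb}(O_F)=\tilde F$ composed with the orientation character of $\tilde F$ acting on $\mathbb{R}^2$. For the groups $o$ (the lattice $\mathbb{Z}^2$) and $2222$ (the group $p2$), every element of $\tilde F$ acts on $\mathbb{R}^2$ preserving orientation. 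Then $\pi_1(N_2)<\tilde F$ would also preserve orientation, contradicting the non-orientability of $N_2=\mathbb{R}^2/\pi_1(N_2)$. More generally, any $\tilde F$ that contains the orientation-reversing group $\pi_1(N_2)$ must contain glide reflections or reflections. This excludes exactly the orientable Euclidean orbifold groups $p1,p2,p3,p4,p6$, and in particular the torus and pillowcase.

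The remaining orbifolds all give $\vcd=0$, which proves the statement. The only delicate point is justifying that $\tilde F$ acts isometrically, so that $O_F$ is Euclidean. I would get this either from Nielsen realization \cite[Theorem~1]{CX24} on the flat Klein bottle, or simply from the Riemann--Hurwitz equation with $\chi(N_2)=0$, which forces the orbifold Euler characteristic of $O_F$ to vanish.
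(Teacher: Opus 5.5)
Your proposal is correct and follows essentially the same route as the paper: make the $F$-action on the capped Klein bottle flat, lift it to a wallpaper group $\tilde F$, and tabulate the resulting Euclidean orbifolds, each of which gives $\vcd=0$. The only difference is that you start from all $17$ Euclidean orbifolds and explicitly rule out the orientation-preserving groups $p1,p2,p3,p4,p6$, among them the torus and pillowcase, which would give $\vcd=1$. The paper instead asserts directly that $\tilde F$ contains an orientation-reversing isometry and lists only the remaining $12$ groups.
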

\begin{proof}
{\scriptsize 
\begin{table}[h]
\centering
\caption{\small Possible wallpaper groups $\tilde{F}$ that lift $F$}
\label{table:wallpapers}
\begin{tabular}{|c|c|c|c|c|c|c|c|}
\hline
& & & & & & & \\
Wallpaper group & Signature  & $g_F$ & $b_c$ & $b_m$ & $e_F$ & $\Mod\big(\Sigma_{g_F,b_m+e_F}^{b_c}\big)$  & $\vcd$ \\
& & & & & & & \\
\hline
$\mathbf{p6m}$ & $(0;+;[-];\{(6,3,2)\})$ & 0 & 1 & 0 & 0 & $\Gamma^{1}_{0,0}$ & 0 \\
\hline
$\mathbf{p4m}$ & $(0;+;[-];\{(4,4,2)\})$ & 0 & 1 & 0 & 0 & $\Gamma^{1}_{0,0}$ & 0 \\
\hline
$\mathbf{p4g}$ & $(0;+;[4];\{(2)\})$ & 0 & 1 & 0 & 1 & $\Gamma^{1}_{0,1}$ & 0 \\
\hline
$\mathbf{p3m1}$ & $(0;+;[-];\{(3,3,3)\})$ & 0 & 1 & 0 & 0 & $\Gamma^{1}_{0,0}$ & 0 \\
\hline
$\mathbf{p31m}$ & $(0;+;[3];\{(3)\})$ & 0 & 1 & 0 & 1 & $\Gamma^{1}_{0,1}$ & 0 \\
\hline
$\mathbf{pmm}$ & $(0;+;[-];\{(2,2,2,2)\})$ & 0 & 1 & 0 & 0 & $\Gamma^{1}_{0,0}$ & 0 \\
\hline
$\mathbf{cmm}$ & $(0;+;[2];\{(2,2)\})$ & 0 & 1 & 0 & 1 & $\Gamma^{1}_{0,1}$ & 0 \\
\hline
$\mathbf{pmg}$ & $(0;+;[2,2];\{(-)\})$ & 0 & 0 & 1 & 2 & $\Gamma^{0}_{0,3}$ & 0 \\
\hline
$\mathbf{pgg}$ & $(1;-;[2,2];\{-\})$ & 1 & 0 & 0 & 2 & $\calN^{0}_{1,2}$ & 0 \\
\hline
$\mathbf{pm}$ & $(0;+;[-];\{(-),(-)\})$ & 0 & 0 & 2 & 0 & $\Gamma^{0}_{0,2}$ & 0 \\
\hline
$\mathbf{cm}$ & $(1;-;[-];\{(-)\})$ & 1 & 0 & 1 & 0 & $\calN^{0}_{1,1}$ & 0 \\
\hline
$\mathbf{pg}$ & $(2;-;[-];\{-\})$ & 2 & 0 & 0 & 0 & $\calN^{0}_{2,0}$ & 0 \\
\hline
\end{tabular}
\end{table}}

Let $F$ be a finite subgroup of diffeomorphisms of $N_{2,n}$. By capping the punctures, the action of $F$ on $N_{2,n}$ induces an action of $F$ on the closed Klein bottle $N_2$, and we may consider the quotient orbifold $O_F=N_2/F$. The uniformization theorem implies that there is a metric of constant curvature $0$ on $N_2$ such that the action of $F$ is by isometries. By lifting this action to the universal covering, we obtain a group $\tilde F$ acting by Euclidean isometries on $\mathbb R^2$ which is an extension of $F$ by the fundamental group of the Klein bottle, and such that $\mathbb{R}^2/\tilde F$ is diffeomorphic to $O_F$. Hence $\tilde F$ is a wallpaper group that contains an orientation-reversing isometry. We list the possible wallpaper groups $\tilde F$ with this property in \cref{table:wallpapers}, following \cite[Table~II]{Conway}, together with the signature of the associated quotient orbifold $O_F$ and the corresponding invariants of $O_F$. The last column gives the $\vcd$ of the mapping class group $\Mod\big(\Sigma_{g_F,b_m+e_F}^{b_c}\big)$ appearing in the previous column.
 \end{proof}

\begin{proposition}\label{prop:FiniteGroupKlein}
    Let $n\geqslant 1$, and let $F$ be a finite subgroup of  diffeomorphisms of $N_{2,n}$. Then
    \begin{enumerate}
        \item $F$ is isomorphic to $(C_s\times C_t)\rtimes C_m$ for some positive integers $s$ and $t$ such that  $st|2n$ and $m=1,2,3,4,$ or $6$ , and 
        \item $|F|\leqslant 12n$.
    \end{enumerate}
\end{proposition}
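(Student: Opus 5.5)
Following the strategy of \Cref{propo: vcdOrbifoldKleinBottle}, I will cap the punctures and put a flat metric on $N_2$ for which $F$ acts by isometries. The action then lifts to a wallpaper group $\tilde F$ acting on $\mathbb{R}^2$. This group fits into an extension
\[1\to \pi_1(N_2)\to \tilde F\to F\to 1.\]

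The first step is to locate a normal abelian subgroup of $F$ with small cyclic quotient. Let $T\cong\mathbb{Z}^2$ be the translation lattice of $\tilde F$. The point group $\tilde F/T$ embeds in $O(2)$ and preserves a lattice. By the crystallographic restriction it is therefore cyclic or dihedral of order dividing $2\cdot\{1,2,3,4,6\}$.

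The translation subgroup $T_K=\pi_1(N_2)\cap T$ of the Klein bottle group is a lattice of index $2$ in $\pi_1(N_2)$. Put $A:=T/(T\cap \pi_1(N_2))$. This is a finite quotient of $\mathbb{Z}^2$ by a sublattice, hence isomorphic to $C_s\times C_t$. It is also the image of $T$ in $F$, a normal abelian subgroup with $F/A$ a quotient of the point group. I would like $F/A$ to be cyclic of order $m\in\{1,2,3,4,6\}$ acting through a rotation part, with an orientation-reversing part absorbed.

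Here is the main obstacle. The point group of $\pi_1(N_2)$ already contains a reflection, so the reflection part of $\tilde F$ dies in $F$ modulo $A$, up to an extension issue. I would argue that $F/A\cong \tilde F/(T\pi_1(N_2))$. Since $T\pi_1(N_2)$ contains an orientation-reversing element, this quotient is the point group modulo a subgroup containing a reflection, hence cyclic of order dividing $6$ or $4$. Splitting of the extension $A\to F\to C_m$ may fail in general. If it does, I would instead pick the elements and adjust $A$ to a slightly larger abelian group, checking case by case against the list in \cref{table:wallpapers}. Alternatively, I would use \Cref{lemma:DoubleCoverGroups} to embed $F$ into orientation-preserving isometries of the torus $S_1$, where the standard description $(C_s\times C_t)\rtimes C_m$ is known.

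To get $st\mid 2n$, I will use the marked points. The group $A$ acts on $N_2$ by elements coming from translations of $\mathbb{R}^2$. On the torus double cover these are translations, which act freely, so $A$ (of order $st$) acts freely on the lifts of the marked points. The preimage of $X$ in the torus has $2n$ points and is invariant under the lift of $A$. Hence $st$ divides $2n$. Actually, since $A$ acts freely on $X$ itself whenever no nontrivial translation-image acts trivially, one even gets $st\mid n$; the weaker statement suffices.

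Finally, $|F|=stm\leqslant 2n\cdot 6=12n$. Here $n\geqslant 1$ is used so that $X$ is nonempty and the divisibility is meaningful.
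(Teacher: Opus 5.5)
\textbf{Gap: part (1) is not proved.} Several of your steps are sound. The subgroup $A=T/(T\cap\pi_1(N_2))$ is normal abelian, and $F/A\cong\tilde F/(T\pi_1(N_2))$. Your divisibility argument is also correct: $T/T_K$ acts by translations, hence freely, on the torus $\mathbb{R}^2/T_K$, and it preserves the $2n$ lifted marked points, so $st\mid 2n$.

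What is missing is the splitting of $1\to A\to F\to F/A\to 1$. You leave it open, and neither fallback closes it:
\begin{enumerate}
\item Enlarging $A$ breaks your $st\mid 2n$ argument, which works only for images of translations.
\item The double-cover fallback is essentially the paper's proof. There, \Cref{lemma:DoubleCoverGroups} embeds $F$ into orientation-preserving diffeomorphisms of $S_{1,2n}$, and \cite[Theorem~4.1]{4Amigos_FullMCG} then gives $(C_s\times C_t)\rtimes C_m$ with $st\mid 2n$ in one step. But you only invoke a ``standard description'' of isometry groups of a torus, without the marked-point divisibility. As written, your structural claim and your divisibility claim concern different subgroups.
\end{enumerate}

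Your justification that $F/A$ is cyclic of order dividing $4$ or $6$ is also flawed. For a dihedral point group $D_k$ with $k\geqslant 3$, a reflection subgroup is not normal, so ``point group modulo a reflection'' is not a cyclic group of order $k$. Separately, drop the aside about $st\mid n$: an element of $T$ can act on $N_2$ as a reflection with fixed circles. With $n=1$ and the marked point on such a circle, you get $st=2\nmid n$.

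\textbf{How to close the gap within your own framework.} The image of $\pi_1(N_2)$ in $P=\tilde F/T$ is $\{1,r\}$ for a reflection $r$. It is normal in $P$ because $\pi_1(N_2)\trianglelefteq\tilde F$, so $r$ is central in $P$. The centralizer of a reflection in $O(2)$ is $\{\pm1,\pm r\}$, hence $P\subseteq\{\pm 1,\pm r\}$ and $m=|F/A|\leqslant 2$.

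If $m=2$, then $-1\in P$. Any element of $\tilde F$ with linear part $-1$ is a half-turn $x\mapsto -x+w$, hence an involution. Its image in $F$ has order $2$ and lies outside $A$, so the extension splits and $F\cong (C_s\times C_t)\rtimes C_2$.

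Completed this way, your route gives the sharper bound $|F|\leqslant 4n$. The paper's route, which cites the torus classification, is shorter but only yields $m\in\{1,2,3,4,6\}$ and hence $|F|\leqslant 12n$.
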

\begin{proof}
By \Cref{lemma:DoubleCoverGroups}, the group $F$ is isomorphic to a finite subgroup of orientation-preserving diffeomorphisms of $S_{1,2n}$. Hence, by \cite[Theorem~4.1]{4Amigos_FullMCG}, there are positive integers $s$ and $t$ and an integer $m\in\{1,2,3,4,6\}$ such that
\[
F\cong (C_s\times C_t)\rtimes C_m
\]
and $st$ divides $2n$. This proves item (1). Item (2) follows immediately, since $|F|=stm\leqslant 12n$.
\end{proof}

\begin{proposition}\label{prop:upperbound:KB}
    Let $n$ be a natural number and let $F$ be a non-trivial finite subgroup of $\calN_{2,n}$. Then 
    \[
    \vcd(WF)+\lambda(F) \leqslant \vcd(\calN_{2,n})=n
    \qquad\text{for } n\geqslant 5,
    \]
    and 
    \[
    \vcd(WF)+\lambda(F) \leqslant
    \begin{cases}
    5, & \text{ if } n=4,\\
    4, & \text{ if } n=3,2.     \end{cases}
    \]
\end{proposition}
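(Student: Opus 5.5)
Following the genus-one case, I would realize $F$ by Nielsen realization as a finite group of diffeomorphisms of $N_{2,n}$ and consider the quotient orbifold $O_F=N_2/F$. Every underlying surface $\Sigma_F$ in \Cref{table:wallpapers} is either orientable of genus $0$ or non-orientable of genus $1$ or $2$, so $\epsilon_F=0$ in all cases. Note also that $g+n-2=n>0$ for $n\geqslant 1$, so \Cref{Prop:Bound:WF} applies (the case $n=0$ is excluded by the ranges in the statement). Combining it with \Cref{propo: vcdOrbifoldKleinBottle} and \Cref{lem:UpperBoundLength}, I would obtain
\[
\vcd(WF)+\lambda(F)\leqslant \frac{n}{|F|}+\Omega(|F|).
\]
The problem then becomes purely arithmetic: bound $h_n(k):=n/k+\Omega(k)$ over the possible orders $k=|F|$.

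By \Cref{prop:FiniteGroupKlein}, $|F|=stm$ with $st\mid 2n$ and $m\in\{1,2,3,4,6\}$. In particular $2\leqslant |F|\leqslant 12n$ and $\Omega(|F|)\leqslant \Omega(2n)+\Omega(m)\leqslant \Omega(2n)+2$. I would split into two cases.

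\textbf{Case $|F|\geqslant 3$.} Here $n/|F|\leqslant n/3$, and $\Omega(|F|)\leqslant \log_2(12n)$ by \Cref{lem:UpperBoundLength}. So it suffices to have $n/3+\log_2(12n)<n+1$, i.e. $\tfrac{2n}{3}+1>\log_2(12n)$. This holds for all $n$ beyond a small threshold (roughly $n\geqslant 10$), since the left side grows linearly.

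\textbf{Case $|F|=2$.} Here $h_n(2)=n/2+1\leqslant n$ whenever $n\geqslant 2$.

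For the finitely many remaining values of $n$ (from $2$ up to the threshold), I would run a finite check analogous to \cref{Appendix:code:projective-plane}. It would enumerate all admissible orders $stm$ with $st\mid 2n$ and compute $\max_k\lfloor n/k+\Omega(k)\rfloor$. Using the structural constraint $st\mid 2n$ rather than just $k\leqslant 12n$ should sharpen $\Omega(k)$ enough to reach the bound $n$ for all $n\geqslant 5$. For $n=2,3,4$ the same table should give the weaker values $4,4,5$. The first guess is $\Omega(2n)+2$ plus a small $n/k$ term: for $n=4$ we get $\Omega(8)+2=5$ with $n/k<1$ at those large $k$; for $n=2,3$ this gives $\Omega(4)+2=4$ and $\Omega(6)+2=4$. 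Integrality of $\vcd(WF)+\lambda(F)$ lets us take floors throughout.

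The main obstacle is the intermediate range of $n$ (about $5\leqslant n\leqslant 10$), where the crude estimate $\log_2(12n)$ is too weak. There one must use $\Omega(|F|)\leqslant\Omega(2n)+\Omega(m)$ and the interplay with $n/|F|$: a large $\Omega$ forces a large $|F|$, which makes $n/|F|$ small. For example, at $n=8$ we have $\Omega(16)+2=6$ with $n/|F|<1$ giving at most $6\leqslant 8$, while small orders give $n/2+1=5$. I would verify these cases by explicit enumeration over divisors of $2n$ and the five values of $m$.
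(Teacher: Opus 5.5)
Your proposal is correct and follows essentially the same route as the paper. Both reduce, via \Cref{Prop:Bound:WF} with $\epsilon_F=0$, the vanishing of $\vcd\big(\Mod\big(\Sigma_{g_F,b_m+e_F}^{b_c}\big)\big)$ for all wallpaper quotients, and $\lambda(F)\leqslant\Omega(|F|)$, to bounding $n/|F|+\Omega(|F|)$, and then combine an analytic estimate for large $n$ with a finite enumeration over orders $stm$ with $st\mid 2n$ and $m\in\{1,2,3,4,6\}$; that enumeration gives maxima $33/8,33/8,41/8$ for $n=2,3,4$, matching your integer bounds $4,4,5$. The only difference is that splitting off $|F|=2$ lets your estimate $n/3+\log_2(12n)<n+1$ take over from $n\geqslant 9$, whereas the paper's cruder $n/2+\log_2 n+4\leqslant n$ only holds from $n\geqslant 16$, so its computer check covers $2\leqslant n\leqslant 15$.
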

\begin{proof}
Let $F$ be a finite non-trivial subgroup of $\calN_{2,n}$. By Nielsen
realization, $F$ can be realized as a finite subgroup of diffeomorphisms of
$N_{2,n}$. By \Cref{lem:UpperBoundLength} and \Cref{prop:FiniteGroupKlein}(2) we have 
\[
\lambda(F)\leqslant \log_2(|F|)\leqslant \log_2(12n)\leqslant \log_2(n)+4.
\]
Using \Cref{Prop:Bound:WF}, the fact that $\epsilon_F=0$ for all  the quotient
orbifolds $O_F$ in \cref{table:wallpapers}, and \Cref{propo: vcdOrbifoldKleinBottle},
we get
\[
\vcd(WF)+\lambda(F)
\leqslant \vcd\big(\Mod\big(\Sigma_{g_F,b_m+e_F}^{b_c}\big)\big)+\frac{n}{|F|}+\lambda(F)
\leqslant \frac{n}{|F|}+\lambda(F).
\]
In particular, since $|F|\geqslant 2$, the previous logarithmic bound gives
\[
\vcd(WF)+\lambda(F)
\leqslant \frac{n}{2}+\log_2(n)+4.
\]
It is straightforward to verify that
$\frac{n}{2}+\log_2(n)+4\leqslant n$ for $n\geqslant 16$. Hence the desired
inequality holds for all $n\geqslant 16$.

It remains to check the finitely many cases $2\leqslant n\leqslant 15$. We use the
additional restriction from \Cref{prop:FiniteGroupKlein}(1) that establishes that the
order of $F$ has the form $|F|=stm$, where $st$ divides $2n$ and $m\in\{1,2,3,4,6\}$.  The program in
Appendix~\ref{Appendix:code:n:F} checks all orders
of this form. It verifies that
\[
\frac{n}{|F|}+\Omega(|F|)\leqslant n
\]
for every possible order $|F|$ whenever $5\leqslant n\leqslant 15$. Since  by \Cref{lem:UpperBoundLength} $\lambda(F)\leqslant \Omega(|F|)$ , this proves the first
assertion for every $n\geqslant 5$.

For $n=4,3,2$, the same computation gives the rational upper bounds
$41/8$, $33/8$, and $33/8$, respectively. Since
$\vcd(WF)+\lambda(F)$ is an integer, these imply the corresponding integer upper bounds
$5$, $4$, and $4$.
\end{proof}

From \Cref{prop:upperbound:KB} and \Cref{Conchita:criterion} we obtain the following estimates for the proper cohomological and geometric dimensions of $\calN_{2,n}$.

\begin{theorem}\label{Thm: MainKB}
For $n=1$ or $n\geqslant 5$, we have
\[
\vcd(\calN_{2,n})=\underline{\cd}(\calN_{2,n})=\underline{\gd}(\calN_{2,n}).
\]
Moreover, for $n=3,4$ the following inequalities hold
\[
\vcd(\calN_{2,n})\leqslant \underline{\cd}(\calN_{2,n})=\underline{\gd}(\calN_{2,n})\leqslant \vcd(\calN_{2,n})+1,
\]
while for $n=2$ we have
\[
\vcd(\calN_{2,2})\leqslant \underline{\cd}(\calN_{2,2})\leqslant \underline{\gd}(\calN_{2,2})\leqslant \vcd(\calN_{2,2})+2.
\]
\end{theorem}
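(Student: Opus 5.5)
The plan is to mirror the proof of \Cref{Thm: MainProjectivePlane}: combine \Cref{prop:upperbound:KB} with \Cref{Conchita:criterion} to bound $\underline{\cd}(\calN_{2,n})$, and then pass to $\underline{\gd}$ using \Cref{eq:InDim}. Recall that $\vcd(\calN_{2,n})=n$.

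For $n\geqslant 5$, \Cref{prop:upperbound:KB} gives $\vcd(WF)+\lambda(F)\leqslant n$ for every non-trivial finite subgroup $F$. For $F=1$ we have $WF=\calN_{2,n}$ and $\lambda(F)=0$, so the same inequality holds trivially. \Cref{Conchita:criterion} with $m=n=\vcd(\calN_{2,n})$ then yields $\underline{\cd}(\calN_{2,n})=n\geqslant 5\geqslant 3$. The inequality $\underline{\gd}\leqslant\max\{3,\underline{\cd}\}$ from \Cref{eq:InDim} therefore gives $\underline{\gd}(\calN_{2,n})=\underline{\cd}(\calN_{2,n})=n$.

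For $n=4$ and $n=3$, \Cref{prop:upperbound:KB} gives the uniform bounds $5$ and $4$, which equal $\vcd+1$. \Cref{Conchita:criterion} with $m=n+1$ then gives $\underline{\cd}\leqslant n+1$. To obtain the equality $\underline{\cd}=\underline{\gd}$ stated there, observe that $\underline{\cd}\geqslant\vcd=n\geqslant 3$, so \Cref{eq:InDim} forces $\underline{\gd}=\underline{\cd}$, and the bound $\leqslant n+1$ follows. For $n=2$, the bound $4=\vcd+2$ gives $\underline{\cd}\leqslant 4$. Here $\underline{\gd}\leqslant\max\{3,\underline{\cd}\}\leqslant 4$ gives the stated chain; only inequalities are claimed, since $\underline{\cd}$ might be $2$ while $\underline{\gd}$ is $3$.

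The case $n=1$ is the only step not covered by \Cref{prop:upperbound:KB}, and I expect it to be the main point requiring care. I would invoke the earlier result \cite[Corollary 1.2]{4Amigos_Spine_NO}, quoted in the introduction, which establishes $\underline{\gd}=\vcd$ for $n=1$. Alternatively, one can argue directly. By \Cref{prop:FiniteGroupKlein}, $|F|\leqslant 12$ and $n/|F|<1$, so \Cref{Prop:Bound:WF} together with \Cref{propo: vcdOrbifoldKleinBottle} gives $\vcd(WF)=0$, provided $\epsilon_F=0$. In fact the free-point term vanishes, because a single marked point cannot have a free orbit under a non-trivial $F$. Hence $\vcd(WF)+\lambda(F)\leqslant\lambda(F)$, which is not bounded by $1$ in general, so the direct route fails. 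I would therefore rely on the cited corollary, which shows that $\calN_{2,1}$ has a model of dimension $\vcd(\calN_{2,1})=1$, as the authors indicate.
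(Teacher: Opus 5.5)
Your proposal is correct and follows the paper's proof. For $n\geqslant 2$ the paper likewise combines \Cref{prop:upperbound:KB} with \Cref{Conchita:criterion}, then uses \Cref{eq:InDim} together with $\underline{\cd}\geqslant\vcd=n\geqslant 3$ to get $\underline{\gd}=\underline{\cd}$ when $n\geqslant 3$.

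The only difference is the source for $n=1$. The paper cites \cite[Proof of Theorem~1.4]{HSSTN24}, using that pure and full groups coincide for one marked point, whereas you cite \cite[Corollary~1.2]{4Amigos_Spine_NO}, which the introduction also credits for this case. Your observation that the direct route only bounds $\vcd(WF)+\lambda(F)$ by $\lambda(F)$, which can exceed $1$, is correct. As a self-contained alternative, $\vcd(\calN_{2,1})=1$ makes the group virtually free, so $\underline{\gd}=1$, as the paper argues for $\calN_{1,3}$.
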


\begin{proof}
The case $n=1$ follows from \cite[Proof of Theorem~1.4]{HSSTN24}, since for one marked point the {\it pure} and {\it full} mapping class groups agree. 

Suppose now that $n\geqslant 2$. By  \Cref{Conchita:criterion}, for every finite subgroup $F\leqslant \calN_{2,n}$ the inequalities in \Cref{prop:upperbound:KB} give the corresponding upper bounds for $\underline{\cd}(\calN_{2,n})$.

On the other hand, recall that $\underline{\gd}(G)\leqslant \max\{3,\underline{\cd}(G)\}$ for every group $G$ by \Cref{eq:InDim}. For $n\geqslant 3$, we have that $\underline{\cd}(\calN_{2,n})\geqslant \vcd(\calN_{2,n})=n\geqslant 3$ and hence $\underline{\gd}(\calN_{2,n})=\underline{\cd}(\calN_{2,n})$. For $n=2$, from the argument in the previous paragraph, we have $\underline{\cd}(\calN_{2,2})\leqslant 4=\vcd(\calN_{2,2})+2$, and  the stated lower and upper bounds for $\underline{\gd}$ follow again from \Cref{eq:InDim}.
\end{proof}

\section{Genus 3}\label{sec:genus-3}

In this section we prove in \Cref{prop:N3:marked:points} that the proper cohomological and geometric dimensions of
$\calN_{3,n}$ coincide with its $\vcd$ for
$n\geqslant 1$. 

According to \cite[Proposition 1]{BEM14} the finite groups $F$ of diffeomorphisms acting  on a non-orientable surface $N_3$ of genus $3$ are $C_2$, $C_3$, $C_4$, $C_6$, $C_2\times C_2$, $D_3$, $D_4$ and $D_6$. The signature of the orbifold $O_F=N_3/F$  \[\big(g_F;\pm;[m_1,m_2,\ldots,m_{e_F}]; \{ (n_{i,1},\ldots, n_{i,s_i}) , i=1,\ldots,b\}\big),\]
must satisfy the Riemann--Hurwitz equation (see for instance \cite[Section 2.4]{HSSTN24}):
\begin{equation}\label{eq:RH}
    \dfrac{1}{|F|} = \eta g_F + b-2+\sum_{i=1}^{e_F} \left(1-\frac{1}{m_i}\right)+\frac{1}{2}\sum_{i=1}^{b_c}\sum_{j=1}^{s_i} \left( 1-\frac{1}{n_{i,j}}\right),
\end{equation}
where $\eta=2$ if the signature has a $+$ and $\eta=1$ otherwise. The code in
Appendix~\ref{Appendix:code:genus-3} enumerates, for each possible group order, all the
orbifold signatures satisfying \Cref{eq:RH}; these are the rows of
\Cref{table:orbifolds-genus-3}. 
\begin{proposition}\label{prop:orbifolds-genus-3}
The  signature of every quotient orbifold $O_F$ arising from an action of a finite group
$F$ on $N_3$ appears in \Cref{table:orbifolds-genus-3}. 
\end{proposition}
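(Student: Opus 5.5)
The plan is to turn the statement into a finite enumeration problem and then argue that the enumeration is complete. The main tools are the classification of \cite[Proposition 1]{BEM14} and the Riemann--Hurwitz equation \Cref{eq:RH}.

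First, let $F$ be a finite group acting on $N_3$. By \cite[Proposition 1]{BEM14}, $F$ is one of $C_2,C_3,C_4,C_6,C_2\times C_2,D_3,D_4,D_6$. Hence $|F|\in\{2,3,4,6,8,12\}$. The quotient $O_F=N_3/F$ is a compact $2$-orbifold, and the branched cover $p\colon N_3\to O_F$ is regular of degree $|F|$. Multiplicativity of the orbifold Euler characteristic gives $\chi(N_3)=-1=|F|\,\chi^{orb}(O_F)$, and this is exactly \Cref{eq:RH}.

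Second, we bound all the parameters in \Cref{eq:RH} for a fixed order $|F|$. Each term $1-1/m_i$ is at least $1/2$, and each corner term $\tfrac12(1-1/n_{i,j})$ is at least $1/4$. Also $\eta g_F+b-2\geqslant -2$. Since the left-hand side is at most $1/2$, these estimates give explicit bounds:
\begin{itemize}
\item $\eta g_F+b\leqslant 3$;
\item $e_F\leqslant 5$;
\item the total number of corner points is at most $10$.
\end{itemize}
The local isotropy groups must embed in $F$. Elliptic points have cyclic isotropy $C_{m_i}$ and corner points have dihedral isotropy $D_{n_{i,j}}$ of order $2n_{i,j}$. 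So every $m_i$ divides $|F|$, and every $2n_{i,j}$ divides $|F|$; in particular $m_i,n_{i,j}\leqslant 12$. Within these bounds, the solutions of \Cref{eq:RH} form a finite list that can be enumerated exactly with rational arithmetic. This is what the code in Appendix~\ref{Appendix:code:genus-3} does, and its output is \Cref{table:orbifolds-genus-3}.

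Third, we rule out or keep the candidates using constraints on the total space. Since $N_3$ is non-orientable, any signature whose orbifold cover would be forced to be orientable must be discarded. For example, a quotient with $+$, no boundary and $F$ acting freely off points would require the covering to be orientable unless $F$ contains orientation-reversing elements. Only those signatures compatible with a group from the BEM14 list survive.

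For the proposition as stated, however, it suffices to show \emph{containment}: every actual signature satisfies \Cref{eq:RH} with $|F|$ in the allowed set and with the bounds above. It therefore lies among the enumerated rows. Realizability of every row is not needed.

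The main obstacle is making sure the search space used by the code really contains every possible signature. This means justifying the a priori bounds on $g_F$, $b$, $e_F$, the number of corners, and the admissible values of $m_i$ and $n_{i,j}$, in particular the divisibility constraints coming from the isotropy groups $C_{m_i}$ and $D_{n_{i,j}}$ of $F$. I would state these bounds explicitly before invoking the computation, so that the table is provably exhaustive.
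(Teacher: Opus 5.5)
Your overall route is the paper's: the list of \cite[Proposition 1]{BEM14}, the Riemann--Hurwitz equation \Cref{eq:RH}, a priori bounds on $g_F$, $b$, $e_F$ and the corners, then an exhaustive enumeration. Your bounds $\eta g_F+b\leqslant 3$ and ``at most $10$ corners'' are looser than the paper's $\eta g_F+b\leqslant 2$ and $c_F\leqslant 5$. That is harmless, because the extra range contributes no solutions.

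The gap is in the constraints on the local data. You reduce ``isotropy groups embed in $F$'' to the divisibility conditions $m_i\mid |F|$ and $2n_{i,j}\mid |F|$, and you impose no restriction at all on reflector boundary. That search space is strictly larger than the one used in Appendix~\ref{Appendix:code:genus-3}, and it contains solutions of \Cref{eq:RH} that are not in \Cref{table:orbifolds-genus-3}:
\begin{itemize}
\item $(0;+;[8];\{(2)\})$ with $|F|=8$ gives $-1+\tfrac78+\tfrac14=\tfrac18$;
\item $(0;+;[4];\{(3)\})$ with $|F|=12$ gives $-1+\tfrac34+\tfrac13=\tfrac1{12}$;
\item $(0;+;[6];\{(3)\})$ with $|F|=6$, $(0;+;[4];\{(2,2)\})$ with $|F|=4$, and $(0;+;[3,3];\{(-)\})$ with $|F|=3$ also satisfy \Cref{eq:RH}.
\end{itemize}
All of these pass your tests, and none of these signatures appears anywhere in the table. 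So the claim that your enumeration ``within these bounds'' is what the code does, with the table as its output, is false. Containment in the table therefore does not follow from what you wrote. Your third step cannot repair this: every one of these candidates has reflector boundary, so orientability of the cover gives no obstruction. You also set that step aside yourself.

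The missing ingredient is to apply the embedding of isotropy groups group by group, not only through $|F|$, exactly as the code does:
\begin{itemize}
\item A cone point of order $m$ requires an element of order $m$ in $F$. For example, $D_4$ has no element of order $8$ and $D_6$ none of order $4$.
\item A corner of order $q$ requires a dihedral subgroup $D_q\leqslant F$. So $C_4$ and $C_6$ admit no corners. In order $4$, a cone point of order $4$ forces $F=C_4$, which has no Klein four subgroup; in order $6$, a cone point of order $6$ forces $F=C_6$, which has no $D_3$.
\item A mirror point requires a reflection, hence an involution, in $F$. So $C_3$ admits no boundary.
\end{itemize}
With these constraints imposed separately for each of the eight groups, your enumeration becomes the one in Appendix~\ref{Appendix:code:genus-3}, and its output is \Cref{table:orbifolds-genus-3}.
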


\begin{remark}
  Since the enumeration listed in \Cref{table:orbifolds-genus-3} does not construct a
surface-kernel epimorphism for each row, realizability of every candidate is
not claimed.  \end{remark}

The last columns of \Cref{table:orbifolds-genus-3} contain the
numerical data needed to apply \Cref{Bound:WF:by:OF} to the marked surface
$N_{3,n}$. For each row  we list
\[
k:=\vcd(\Mod\big(\Sigma_{g_F,b_m+e_F}^{b_c}\big))+\lambda(F)-\vcd(\calN_3)
=\vcd(\Mod\big(\Sigma_{g_F,b_m+e_F}^{b_c}\big))+\lambda(F)-1,
\]
and the last column records the numerical integer threshold from \Cref{Bound:WF:by:OF}
\[
n_0:=\max\left\{0,\left\lceil \frac{|F|}{|F|-1}(k+\epsilon_F-1)\right\rceil\right\}.
\]
Notice that in all the cases listed in \Cref{table:orbifolds-genus-3}, the
quotient orbifold has either elliptic points or boundary components, and hence
$\epsilon_F=0$ in the sense of \Cref{Prop:Bound:WF}.

{\scriptsize 
\begin{table}[h]
\centering
\caption{\small Riemann--Hurwitz candidates for possible signatures of quotient orbifolds of finite group actions on $N_3$.}\label{table:orbifolds-genus-3}
\renewcommand{\arraystretch}{1.15}
\begin{tabular}{|c|c|c|c|c|c|c|c|c|}
\hline
& & & & & & & & \\
Group $F$
& $|F|$
& Signature
& $\Mod\big(\Sigma_{g_F,b_m+e_F}^{b_c}\big)$
& $\vcd$
& $\lambda(F)\leqslant$
& $k$
& $\vcd+\lambda(F)\leqslant$
& $n_0$\\
& & & & & & & & \\
\hline
$C_2$ & $2$ & $(0;+;[2,2,2,2,2];\{-\})$ & $\Gamma_{0,5}^{0}$ & $2$ & $1$ & $2$ & $3$ & $2$ \\
\hline
$C_2$ & $2$ & $(0;+;[2,2,2];\{(-)\})$ & $\Gamma_{0,4}^{0}$ & $1$ & $1$ & $1$ & $2$ & $0$ \\
\hline
$C_2$ & $2$ & $(0;+;[2];\{(-),(-)\})$ & $\Gamma_{0,3}^{0}$ & $0$ & $1$ & $0$ & $1$ & $0$ \\
\hline
$C_2$ & $2$ & $(1;+;[2];\{-\})$ & $\Gamma_{1,1}^{0}$ & $1$ & $1$ & $1$ & $2$ & $0$ \\
\hline
$C_2$ & $2$ & $(1;-;[2,2,2];\{-\})$ & $\mathcal{N}_{1,3}^{0}$ & $1$ & $1$ & $1$ & $2$ & $0$ \\
\hline
$C_2$ & $2$ & $(1;-;[2];\{(-)\})$ & $\mathcal{N}_{1,2}^{0}$ & $0$ & $1$ & $0$ & $1$ & $0$ \\
\hline
$C_2$ & $2$ & $(2;-;[2];\{-\})$ & $\mathcal{N}_{2,1}^{0}$ & $1$ & $1$ & $1$ & $2$ & $0$ \\
\hline
$C_3$ & $3$ & $(1;-;[3,3];\{-\})$ & $\mathcal{N}_{1,2}^{0}$ & $0$ & $1$ & $0$ & $1$ & $0$ \\
\hline
$C_4$ & $4$ & $(0;+;[4,4,4];\{-\})$ & $\Gamma_{0,3}^{0}$ & $0$ & $2$ & $1$ & $2$ & $0$ \\
\hline
$C_4$ & $4$ & $(0;+;[2,2,2,4];\{-\})$ & $\Gamma_{0,4}^{0}$ & $1$ & $2$ & $2$ & $3$ & $2$ \\
\hline
$C_4$ & $4$ & $(0;+;[2,4];\{(-)\})$ & $\Gamma_{0,3}^{0}$ & $0$ & $2$ & $1$ & $2$ & $0$ \\
\hline
$C_4$ & $4$ & $(1;-;[2,4];\{-\})$ & $\mathcal{N}_{1,2}^{0}$ & $0$ & $2$ & $1$ & $2$ & $0$ \\
\hline
$C_6$ & $6$ & $(0;+;[3,3,6];\{-\})$ & $\Gamma_{0,3}^{0}$ & $0$ & $2$ & $1$ & $2$ & $0$ \\
\hline
$C_6$ & $6$ & $(0;+;[2,6,6];\{-\})$ & $\Gamma_{0,3}^{0}$ & $0$ & $2$ & $1$ & $2$ & $0$ \\
\hline
$C_6$ & $6$ & $(0;+;[2,2,2,3];\{-\})$ & $\Gamma_{0,4}^{0}$ & $1$ & $2$ & $2$ & $3$ & $2$ \\
\hline
$C_6$ & $6$ & $(0;+;[2,3];\{(-)\})$ & $\Gamma_{0,3}^{0}$ & $0$ & $2$ & $1$ & $2$ & $0$ \\
\hline
$C_6$ & $6$ & $(1;-;[2,3];\{-\})$ & $\mathcal{N}_{1,2}^{0}$ & $0$ & $2$ & $1$ & $2$ & $0$ \\
\hline
$D_2$ & $4$ & $(0;+;[-];\{(2,2,2,2,2)\})$ & $\Gamma_{0,0}^{1}$ & $0$ & $2$ & $1$ & $2$ & $0$ \\
\hline
$D_2$ & $4$ & $(0;+;[2];\{(2,2,2)\})$ & $\Gamma_{0,1}^{1}$ & $0$ & $2$ & $1$ & $2$ & $0$ \\
\hline
$D_2$ & $4$ & $(0;+;[2,2];\{(2)\})$ & $\Gamma_{0,2}^{1}$ & $1$ & $2$ & $2$ & $3$ & $2$ \\
\hline
$D_2$ & $4$ & $(0;+;[-];\{(-),(2)\})$ & $\Gamma_{0,1}^{1}$ & $0$ & $2$ & $1$ & $2$ & $0$ \\
\hline
$D_2$ & $4$ & $(1;-;[-];\{(2)\})$ & $\mathcal{N}_{1,0}^{1}$ & $0$ & $2$ & $1$ & $2$ & $0$ \\
\hline
$D_3$ & $6$ & $(0;+;[2,2,2,3];\{-\})$ & $\Gamma_{0,4}^{0}$ & $1$ & $2$ & $2$ & $3$ & $2$ \\
\hline
$D_3$ & $6$ & $(0;+;[2];\{(3,3)\})$ & $\Gamma_{0,1}^{1}$ & $0$ & $2$ & $1$ & $2$ & $0$ \\
\hline
$D_3$ & $6$ & $(0;+;[2,3];\{(-)\})$ & $\Gamma_{0,3}^{0}$ & $0$ & $2$ & $1$ & $2$ & $0$ \\
\hline
$D_3$ & $6$ & $(1;-;[2,3];\{-\})$ & $\mathcal{N}_{1,2}^{0}$ & $0$ & $2$ & $1$ & $2$ & $0$ \\
\hline
$D_4$ & $8$ & $(0;+;[-];\{(4,4,4)\})$ & $\Gamma_{0,0}^{1}$ & $0$ & $3$ & $2$ & $3$ & $2$ \\
\hline
$D_4$ & $8$ & $(0;+;[-];\{(2,2,2,4)\})$ & $\Gamma_{0,0}^{1}$ & $0$ & $3$ & $2$ & $3$ & $2$ \\
\hline
$D_4$ & $8$ & $(0;+;[4];\{(4)\})$ & $\Gamma_{0,1}^{1}$ & $0$ & $3$ & $2$ & $3$ & $2$ \\
\hline
$D_4$ & $8$ & $(0;+;[2];\{(2,4)\})$ & $\Gamma_{0,1}^{1}$ & $0$ & $3$ & $2$ & $3$ & $2$ \\
\hline
$D_6$ & $12$ & $(0;+;[-];\{(3,3,6)\})$ & $\Gamma_{0,0}^{1}$ & $0$ & $3$ & $2$ & $3$ & $2$ \\
\hline
$D_6$ & $12$ & $(0;+;[-];\{(2,6,6)\})$ & $\Gamma_{0,0}^{1}$ & $0$ & $3$ & $2$ & $3$ & $2$ \\
\hline
$D_6$ & $12$ & $(0;+;[-];\{(2,2,2,3)\})$ & $\Gamma_{0,0}^{1}$ & $0$ & $3$ & $2$ & $3$ & $2$ \\
\hline
$D_6$ & $12$ & $(0;+;[6];\{(2)\})$ & $\Gamma_{0,1}^{1}$ & $0$ & $3$ & $2$ & $3$ & $2$ \\
\hline
$D_6$ & $12$ & $(0;+;[3];\{(6)\})$ & $\Gamma_{0,1}^{1}$ & $0$ & $3$ & $2$ & $3$ & $2$ \\
\hline
$D_6$ & $12$ & $(0;+;[2];\{(2,3)\})$ & $\Gamma_{0,1}^{1}$ & $0$ & $3$ & $2$ & $3$ & $2$ \\
\hline
\end{tabular}
\end{table}}

\begin{theorem}\label{prop:N3:marked:points}
Let $n\geqslant 1$. Then
\[
\underline{\gd}(\calN_{3,n})=\underline{\cd}(\calN_{3,n})=\vcd(\calN_{3,n}).
\]
\end{theorem}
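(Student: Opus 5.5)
The plan is to apply \Cref{Conchita:criterion} with $m=\vcd(\calN_{3,n})=2\cdot 3+n-4=n+2$, and then pass from $\underline{\cd}$ to $\underline{\gd}$ using \Cref{eq:InDim}. Since $n\geqslant 1$ gives $n+2\geqslant 3$, once we know $\underline{\cd}(\calN_{3,n})=n+2$, the inequality $\underline{\gd}\leqslant\max\{3,\underline{\cd}\}$ forces $\underline{\gd}(\calN_{3,n})=\underline{\cd}(\calN_{3,n})=\vcd(\calN_{3,n})$. For the trivial subgroup the inequality $\vcd(WF)+\lambda(F)\leqslant\vcd$ is an equality. So everything reduces to verifying $\vcd(WF)+\lambda(F)\leqslant n+2$ for every non-trivial finite $F<\calN_{3,n}$.

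For such $F$, I would use \Cref{lem:COMPARE} (valid since $g=3$, $n\geqslant 1$) together with Nielsen realization to regard $F$ as a finite group acting on $N_3$ preserving the marked points. Then $F$ is one of the groups in the list of \cite[Proposition 1]{BEM14}, and by \Cref{prop:orbifolds-genus-3} the signature of $O_F$ appears in \Cref{table:orbifolds-genus-3}. Note that $g+n-2>0$ holds, so \Cref{Prop:Bound:WF} and \Cref{Bound:WF:by:OF} apply, and $\epsilon_F=0$ in every row.

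The main step is a row-by-row application of \Cref{Bound:WF:by:OF}, using the value $k$ listed in the table; the column $\lambda(F)\leqslant$ is justified by \Cref{lem:UpperBoundLength}. The corollary gives the desired bound whenever $n\geqslant n_0$. For rows with $n_0=0$ this covers all $n\geqslant 1$.

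The only obstacle is the rows with $n_0=2$, where $k=2$ and we must also handle $n=1$. Here I would argue directly from \Cref{Prop:Bound:WF}:
\[
\vcd(WF)+\lambda(F)\leqslant \bigl(\vcd(\Mod(\Sigma_{g_F,b_m+e_F}^{b_c}))+\lambda(F)\bigr)+n_F\leqslant 3+n_F,
\]
where $3=k+\vcd(\calN_3)$. When $n=1$, the single marked point $x_1$ is fixed by all of $F$, so its image in $O_F$ is not a free point and $n_F=0$. This gives the bound $3=n+2$, as required.

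More generally, $n_F\leqslant \lfloor n/|F|\rfloor$, so for $n<|F|$ the bound $3\leqslant n+2$ already suffices. For larger $n$ we have $n_F\leqslant n/2\leqslant n-1$, which gives $3+n_F\leqslant n+2$ as well. Combining all cases yields the inequality for every finite $F$, and the theorem follows.
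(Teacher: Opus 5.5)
Your proof is correct. For $n\geqslant 2$ it is exactly the paper's argument: read off $k$ from \Cref{table:orbifolds-genus-3}, note that $\epsilon_F=0$ and $n_0\leqslant 2\leqslant n$, apply \Cref{Bound:WF:by:OF}, and conclude with \Cref{Conchita:criterion} and \Cref{eq:InDim}.

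The one genuine difference is the case $n=1$. The paper does not run the orbifold analysis there. Instead it observes that $\calN_{3,1}$ coincides with the pure mapping class group and quotes \cite[Corollary~1.3]{4Amigos_Spine_NO}. You stay inside the same machinery. Since $X=\{x_1\}$ is $F$-invariant, $x_1$ has stabilizer $F\neq 1$, so $n_F=0$. For the rows with $k=2$, \Cref{Prop:Bound:WF} then gives $\vcd(WF)+\lambda(F)\leqslant \vcd(\calN_3)+k=3=\vcd(\calN_{3,1})$. Even the stated form with the term $1/|F|$ suffices, by integrality. The rows with $k\leqslant 1$ already have $n_0=0$.

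Your route makes the theorem self-contained and uniform in $n$, at the cost of relying on the table and on the Weyl-group bound also at $n=1$. The paper's citation draws instead on an independent geometric construction, a spine of decorated Teichm\"uller space, which directly provides a model of minimal dimension. That extra information is not needed here, since $\vcd(\calN_{3,1})=3$ already forces $\underline{\gd}=\underline{\cd}$.
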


\begin{proof}
First, notice that
$\vcd(\calN_{3,n})=n+2\geqslant 3$ for
$n\geqslant 1$. It then follows from \Cref{eq:InDim} that $\underline{\gd}(\calN_{3,n})=\underline{\cd}(\calN_{3,n})\geqslant \vcd(\calN_{3,n})$.
Hence we only need to prove that $\underline{\cd}(\calN_{3,n})\leqslant \vcd(\calN_{3,n})$.

For $n=1$, the {\it pure} and {\it full} mapping class groups agree; therefore, \cite[Corollary~1.3]{4Amigos_Spine_NO}
implies that $\underline{\gd}(\calN_{3,1})=\vcd(\calN_{3,1})$.
Assume from now on that $n\geqslant 2$.

Let $F$ be a finite subgroup of $\calN_{3,n}$. By Nielsen realization we can realize $F$ as a group of diffeomorphisms of $N_{3,n}$. By capping the punctures, the action of $F$ on $N_{3,n}$ induces an action of $F$ on the closed surface $N_3$, and we may consider the quotient orbifold $O_F=N_3/F$. By \Cref{prop:orbifolds-genus-3}
the signature of every quotient orbifold $O_F$ appears in \Cref{table:orbifolds-genus-3}. From the second-to-last column, we see that the following inequality holds
\[
\vcd\big(\Mod\big(\Sigma_{g_F,b_m+e_F}^{b_c}\big)\big)+
\lambda(F)\leqslant 1+k=
\vcd(\calN_3)+k,
\]
for every row, and from the last column of
\Cref{table:orbifolds-genus-3} we observe that  $n_0\leqslant 2\leqslant n$. Therefore the hypotheses of \Cref{Bound:WF:by:OF} are satisfied for every row and hence
\[
\vcd(WF)+\lambda(F)
\leqslant
\vcd(\calN_{3,n}).
\]
The conclusion of the theorem follows from \Cref{Conchita:criterion}.
\end{proof}

\section{Genus 4 and 5} \label{sec:GENUS4&5}

In this section, we prove \Cref{Thm:g:4:5:punctures}, which gives the cases $g=4,5$ of our main result.

First we state the following upper bounds, for the case of genus $g=4$, that can be obtained from the work in \cite{HSSTN24}.
\begin{proposition}\label{Prop:N4:HSSTN}
Let $F$ be a finite subgroup of $\calN_4$,  and let $O_F$ be the associated quotient orbifold with underlying surface $\Sigma_F$. 
\begin{enumerate}
    \item If $\Sigma_F$ is non-orientable or if $\Sigma_F$ is orientable with $g_F\geqslant 1$, then
    \[
    \vcd(\Gamma_F^*)+\lambda(F)\leqslant \vcd(\calN_4).
    \]

    \item If $\Sigma_F$ is orientable, $g_F=0$, and $e_F+b\geqslant 3$, or if $g_F=0$, $e_F+b\leqslant 3$, and $\lambda(F)\leqslant 3$, then
    \[
    \vcd(\Gamma_F^*)+\lambda(F)\leqslant \vcd(\calN_4)+1.
    \]

    \item If $\Sigma_F$ is orientable, $g_F=0$, $e_F+b<3$, and
    $\lambda(F)=4$, then
    \[
    \vcd(\Gamma_F^*)+\lambda(F)\leqslant \vcd(\calN_4)+1.
    \]
    The possible finite subgroups $F$ in this case are $C_2\times D_4$, $S_4$, and
    $C_2\times A_4$, of orders $16$, $24$, and $24$, respectively.

    \item If $\Sigma_F$ is orientable, $g_F=0$, $e_F+b<3$, and
    $\lambda(F)=5$, then
    \[
    \vcd(\Gamma_F^*)+\lambda(F)\leqslant \vcd(\calN_4)+2.
    \]
    The only possible finite subgroup $F$ in this case is $C_2\times S_4$, of order $48$.
\end{enumerate}
\end{proposition}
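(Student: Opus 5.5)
This proposition is essentially a repackaging of the case analysis for closed genus $4$ in \cite{HSSTN24}, so I would extract each item from the corresponding step there rather than redo the classification. Throughout, $\vcd(\calN_4)=2\cdot 4-5=3$. By Riemann--Hurwitz, \Cref{eq:RH} with $1/|F|$ replaced by $2/|F|$ since $\chi(N_4)=-2$, one has $\chi^{orb}(O_F)=-2/|F|$. I would use two bounds on the right-hand side. First, $\lambda(F)\leqslant\Omega(|F|)$ by \Cref{lem:UpperBoundLength}. Second, $\vcd(\Gamma_F^*)$ is at most the $\vcd$ of the underlying mapping class group $\Mod(\Sigma^{b_c}_{g_F,b_m+e_F})$, read off from the formulas of \Cref{Sec:MCG}.

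\emph{Item (1).} When $\Sigma_F$ is non-orientable, or orientable with $g_F\geqslant 1$, the orbifold Euler characteristic forces $g_F$, $e_F$ and $b$ to be small relative to $|F|$. Explicitly, $\eta g_F+b+\sum(1-1/m_i)+\tfrac12\sum(1-1/n_{ij})=2+2/|F|\leqslant 3$. This bounds the underlying $\vcd$ by a small constant. The accompanying groups are also small: they are cyclic or dihedral of bounded order, because large groups force a sphere or disk quotient. So $\vcd+\lambda\leqslant 3$ should follow by the case check done in \cite{HSSTN24}.

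\emph{Item (2).} For $g_F=0$ orientable, the underlying group is $\Gamma^{b_c}_{0,b_m+e_F}$, with $\vcd$ equal to $e_F+b_m+2b_c-3$ or $0$. Riemann--Hurwitz gives $e_F+b$ about $4$ or $5$ at most, so $\vcd\leqslant 2$. When $e_F+b\geqslant 3$, the group $F$ is generated with few branch data and its order is bounded. From the list of finite groups acting on $N_4$ in \cite{HSSTN24} (orders at most $48$), I would get $\lambda(F)\leqslant 2+(4-\vcd)$ row by row. When $e_F+b\leqslant 3$ and $\lambda(F)\leqslant 3$, the $\vcd$ is $0$ or $1$, and the bound $\leqslant 4$ is immediate.

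\emph{Items (3) and (4).} Here $e_F+b<3$ forces the underlying $\vcd$ to be $0$, since $\Gamma_{0,\leqslant2}$, $\Gamma^1_{0,\leqslant1}$ and $\Gamma^2_{0,0}$ all have $\vcd\leqslant 1$. A small check rules out $\vcd=1$, leaving $\vcd(\Gamma_F^*)+\lambda(F)=\lambda(F)$. This equals $4=3+1$ or $5=3+2$.

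The real content is identifying which $F$ occur. One reads off from the classification of maximal and large finite group actions on $N_4$ (Bujalance et al., as used in \cite{HSSTN24}) which groups have length $4$ or $5$ with quotient a disk or sphere with at most two cone or corner-bearing boundary data. These are $C_2\times D_4$, $S_4$ and $C_2\times A_4$, and the unique order-$48$ group $C_2\times S_4$, which realises the maximal order $48$ for $N_4$.

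I expect this last identification to be the main obstacle. I would handle it by citing the relevant lemmas of \cite{HSSTN24} directly. The fallback is to enumerate orders $|F|\leqslant 48$ with $\Omega(|F|)\geqslant 4$ against signatures solving Riemann--Hurwitz with $g_F=0$ and $e_F+b<3$, in the spirit of the genus-$3$ table.
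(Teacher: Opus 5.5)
Your route is the paper's. Items (1) and (2) are imported from the closed-surface analysis in \cite{HSSTN24}, and items (3)--(4) are read off from the classification of actions on $N_4$ in \cite[Table~4]{BEM14}, with $\vcd(\Gamma_F^*)=0$ because the quotient is a disk. For $e_F+b<3$ you use the formula for $\Gamma^{b_c}_{0,b_m+e_F}$ directly. It gives $\vcd\leqslant 1$, with $\vcd=1$ only when both boundary components carry corners, and Riemann--Hurwitz confines that case to $|F|\leqslant 4$. This is a fine substitute for the paper's appeal to \cite[Remark~5.5]{HSSTN24} and to explicit signatures. There are, however, two concrete problems.

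\textbf{Item (1).} It does not follow from \cite{HSSTN24} alone. As the paper points out, Theorems~4.2 and~4.3 there leave open an exceptional configuration with $|F|=8$ and non-orientable quotient \cite[Remark~4.3]{HSSTN24}. The paper rules it out with \cite[Table~4]{BEM14}, where every group of order $8$ acting on $N_4$ has orientable quotient. You could instead close it with the Riemann--Hurwitz count you only sketched. For $|F|=8$ and $\Sigma_F$ non-orientable, $g_F+b+\sum(1-1/m_i)+\frac12\sum(1-1/n_{ij})=9/4$ forces the signature $(1;-;[2,4];\{-\})$ or $(1;-;[-];\{(2)\})$. Their underlying groups $\calN_{1,2}$ and $\calN^{1}_{1,0}$ have $\vcd=0$, so $\vcd(\Gamma_F^*)+\lambda(F)\leqslant\Omega(8)=3$. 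Your assertion that the groups in (1) are cyclic or dihedral is unjustified and in fact false: $A_4$ acts on $N_4$ with quotient signature $(1;-;[2,3];\{-\})$. It is not needed, however.

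\textbf{Item (2).} The target $\lambda(F)\leqslant 2+(4-\vcd)$ only yields $\vcd+\lambda\leqslant 6$. What is needed is $\lambda(F)\leqslant 4-\vcd(\Gamma_F^*)$, which the paper takes from \cite[Remark~5.3]{HSSTN24}. If you verify it by Riemann--Hurwitz rather than from a table, you must use that an orientable quotient of the non-orientable $N_4$ has $b\geqslant 1$. The reason is that a signature with sign $+$ and no period cycles is Fuchsian, so its torsion-free subgroups uniformize orientable surfaces. Without this, Riemann--Hurwitz goes wrong in two ways:
\begin{itemize}
\item $(0;+;[2,3,8];\{-\})$ with $|F|=48$ and $\Omega(48)=5$ passes it, so your bound would not follow;
\item $(0;+;[2,2,2,2,2,2];\{-\})$, with $\vcd(\Gamma_{0,6})=3$, contradicts your claim $\vcd\leqslant 2$.
\end{itemize}
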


\begin{proof}
(1) Suppose that $\Sigma_F$ is non-orientable. By \cite[Theorem~4.2 and Theorem~4.3]{HSSTN24}, we have $\vcd(\Gamma_F^*)+\lambda(F)\leqslant \vcd(\calN_4)$, except possibly in the exceptional case described in \cite[Remark~4.3]{HSSTN24}, which corresponds to a group of order~$8$. However, by \cite[Table~4]{BEM14}, every group of order~$8$ acting on $N_4$ has an orientable quotient. Hence, this exceptional situation cannot occur when $\Sigma_F$ is non-orientable. If $\Sigma_F$ is orientable with $g_F\geqslant 1$, the result follows from \cite[Theorem~5.1]{BEM14}.

\medskip

(2) If $\Sigma_F$ is orientable, $g_F=0$, and $e_F+b\geqslant 3$, the result follows from \cite[Remark~5.3]{HSSTN24}. Suppose now that $\Sigma_F$ is orientable, $g_F=0$, and $e_F+b<3$. In this case $\vcd(\Gamma_F^*)\leqslant 2$. By \cite[Remark~5.5]{HSSTN24}, if $\vcd(\Gamma_F^*)=2$ then $\vcd(\Gamma_F^*)+\lambda(F)\leqslant 4=\vcd(\calN_4)+1$. If $\vcd(\Gamma_F^*)\leqslant 1$, the inequality follows since $\lambda(F)\leqslant 3$.

\medskip

(3) By the classification in \cite[Table~4]{BEM14}, the possible finite subgroups $F$ with
$\lambda(F)=4$ are $C_2\times D_4$, $S_4$, and $C_2\times A_4$. Their
quotient signatures have underlying mapping class group
$\Gamma_{0,0}^{1}$ or $\Gamma_{0,1}^{1}$, and hence
$\vcd(\Gamma_F^*)=0$. Therefore
\[
\vcd(\Gamma_F^*)+\lambda(F)
=4=\vcd(\calN_4)+1.
\]

\medskip

(4) The only possible finite subgroup $F$ with $\lambda(F)=5$ is $C_2\times S_4$, of order
$48$. The signature of the associated quotient orbifold $O_F$ is
\[
(0;+;[-];\{(2,4,6)\}),
\]
so the associated underlying mapping class group is $\Gamma_{0,0}^{1}$ and
$\vcd(\Gamma_F^*)=0$. Consequently,
\[
\vcd(\Gamma_F^*)+\lambda(F)
=5=\vcd(\calN_4)+2.
\]
\end{proof}

From \cite[Proof of Theorem 1.2]{HSSTN24} we have the following result for the genus $g=5$ case.

\begin{proposition}\label{Prop:N5:HSSTN}
Let $F$ be a finite subgroup of $\calN_5$, and let $O_F$ be the associated orbifold with underlying surface $\Sigma_F$. Then the following bounds hold.

\begin{enumerate}
    \item If $\Sigma_F$ is non-orientable, or if $\Sigma_F$ is orientable with $g_F\geqslant 1$, or $g_F=0$ and $e_F+b\geqslant 3$, then
    \[
    \vcd(\Gamma_F^*)+\lambda(F)\leqslant \vcd(\calN_5).
    \]

    \item If $\Sigma_F$ is orientable, $g_F=0$, and $e_F+b < 3$, then
    \[
    \vcd(\Gamma_F^*)+\lambda(F)\leqslant \vcd(\calN_5)+1.
    \]
\end{enumerate}
\end{proposition}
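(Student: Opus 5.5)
The statement packages what is shown in the proof of Theorem~1.2 of \cite{HSSTN24} for $\calN_5$, where $\vcd(\calN_5)=2\cdot 5-5=5$. I would therefore argue case by case on the topological type of $\Sigma_F$, using throughout that $WF$ embeds in $\Gamma_F^*$, so it suffices to bound $\vcd(\Gamma_F^*)$ via the mapping class group of the underlying surface in \Cref{eq:ModUnderlying}.

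\emph{Non-orientable $\Sigma_F$.} I would invoke \cite[Theorems~4.2 and~4.3]{HSSTN24}, which give $\vcd(\Gamma_F^*)+\lambda(F)\leqslant \vcd(\calN_g)$ for non-orientable quotients. For $g=5$ the exceptional situation of \cite[Remark~4.3]{HSSTN24} should not occur. I would check this directly: Riemann--Hurwitz \Cref{eq:RH} with $\chi(N_5)=-3$ gives $|F|\,(-\chi^{orb}(O_F))=3$. A non-orientable quotient with $g_F\geqslant 1$ then forces small $|F|$, so $\lambda(F)$ is small, while $\vcd(\Gamma_F^*)$ drops roughly by the factor $|F|$.

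\emph{Orientable $\Sigma_F$ with $g_F\geqslant 1$.} The same Euler characteristic count forces $|F|\leqslant 3$ or so, hence $\lambda(F)\leqslant 1$. The $\vcd$ formulas for $\Gamma^{b}_{1,m}$ should then give a total of at most $5$; I would verify this by listing the few signatures.

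\emph{Orientable $\Sigma_F$ with $g_F=0$ and $e_F+b\geqslant 3$.} I would follow \cite[Remark~5.3]{HSSTN24}. In this case $\vcd(\Gamma^{b_c}_{0,b_m+e_F})=e_F+b_m+2b_c-3$. Riemann--Hurwitz bounds $e_F+b$ in terms of $3/|F|$, so a large $\lambda(F)$ forces few cone points. That trade-off should give the bound $5$.

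\emph{Orientable $\Sigma_F$ with $g_F=0$ and $e_F+b<3$.} Here $\vcd(\Gamma_F^*)\leqslant 2$, which leaves only a bound on $\lambda(F)$. I would use the list of large finite groups acting on $N_5$ (orders up to $12(g-2)=36$, via their lifts to $S_4$) to get $\lambda(F)\leqslant 4$. When $\vcd(\Gamma_F^*)=2$, the quotient has a single boundary component with corners plus a cone point, and I would check that this forces $\lambda(F)\leqslant 4$, giving at most $6=\vcd(\calN_5)+1$.

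The main obstacle is the last case. It needs the explicit classification of large group actions on $N_5$; I would take that from the tables used in \cite{HSSTN24}, not rederive it.
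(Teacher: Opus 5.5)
Your handling of case (1) is in line with the paper, which simply imports the statement from the proof of \cite[Theorem~1.2]{HSSTN24}. A direct Riemann--Hurwitz enumeration with $|F|\,(-\chi^{orb}(O_F))=3$ confirms the bound $5$ there. There is one slip in the orientable $g_F\geqslant 1$ case. The equation alone allows, for example, $(1;+;[2];\{-\})$ with $|F|=6$, so ``$|F|\leqslant 3$'' needs the extra observation that a mirror-free orientable quotient comes from a Fuchsian group and cannot be covered by $N_5$. The totals stay small either way.

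\textbf{The gap is in case (2).} The order bound $12(g-2)$ is not valid for unbordered non-orientable surfaces. The general bound is $84(g-2)$, and the paper itself has $C_2\times S_4$ of order $48>24$ acting on $N_4$. More seriously, your conclusion $\lambda(F)\leqslant 4$ is false for $g=5$:
\begin{enumerate}
\item Bring's curve $\{\sum x_i=\sum x_i^2=\sum x_i^3=0\}\subset\mathbb{CP}^4$ has genus $4$, and $S_5$ acts on it by permuting coordinates.
\item Complex conjugation commutes with $S_5$ and is fixed-point free, because $\sum x_i^2=0$ has no nonzero real solutions. The quotient is therefore $N_5$, and $S_5$ acts on it faithfully.
\item Hence $S_5<\calN_5$ with $\lambda(S_5)=5$, via the chain $1<C_2<C_2\times C_2<D_4<S_4<S_5$.
\item Riemann--Hurwitz with $-\chi^{orb}=1/40$ gives the quotient $(0;+;[-];\{(2,4,5)\})$, which lies exactly in case (2).
\end{enumerate}
So the decoupled estimate ``$\vcd(\Gamma_F^*)\leqslant 2$ and $\lambda(F)\leqslant 4$'' cannot be established. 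Your description of when $\vcd(\Gamma_F^*)=2$ is also off. With the conventions of \Cref{thm:OrbifoldMCG}, a disk with corners and one cone point gives $\Gamma^1_{0,1}$, whose $\vcd$ is $0$.

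\textbf{The fix is to analyse $\vcd(\Gamma_F^*)$ and $\lambda(F)$ jointly.} Since $-\chi^{orb}(O_F)=3/|F|>0$, the only orbifolds in case (2) are:
\begin{enumerate}
\item a disk with one cornered boundary and $e_F\leqslant 1$, giving $\vcd(\Gamma_F^*)=0$;
\item an annulus with one cornered and one mirror boundary and $e_F=0$, also giving $\vcd(\Gamma_F^*)=0$;
\item an annulus with corners on both boundaries and $e_F=0$, giving $\Gamma^2_{0,0}$ with $\vcd=1$.
\end{enumerate}
In the third situation each boundary contributes at least $1/4$ to $-\chi^{orb}$, so $|F|\leqslant 6$ and $\lambda(F)\leqslant 2$.

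In the $\vcd$-zero situations you only need $\lambda(F)\leqslant 6$. This follows from $\lambda(F)\leqslant\Omega(|F|)$ together with $|F|\leqslant 120<2^7$. Indeed, by \Cref{lemma:DoubleCoverGroups}, $F$ acts by orientation-preserving automorphisms on the genus-$4$ double cover, and $120$ is the maximal order in genus $4$. Alternatively, Riemann--Hurwitz allows orders above $127$ only for the $(2,3,7)$ and $(2,3,8)$ reflection triangles, i.e.\ $|F|\in\{252,144\}$, and both have $\Omega(|F|)\leqslant 6$. This yields $\vcd(\Gamma_F^*)+\lambda(F)\leqslant 6=\vcd(\calN_5)+1$.
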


The following result is a consequence of \cite[Theorem 2.6 and its proof]{HSSTN24}. It shows that $\vcd(\Gamma_F^*)$ coincides with the $\vcd$ of a suitable surface mapping class group that appears in the hypothesis of \Cref{Bound:WF:by:OF}.

\begin{proposition}\label{thm:OrbifoldMCG}
Let $F$ be a finite subgroup of $\calN_{g}$. Then the orbifold mapping class group $\Gamma_F^*$ is commensurable with $\Mod(\Sigma_{g_F,b_m+e_F}^{b_c})$. In particular,
\[
\vcd(\Gamma_F^*)=
\begin{cases}
\vcd\bigl(\calN_{g_F,b_m+e_F}^{b_c}\bigr), & \text{if } \Sigma_F \text{ is non-orientable},\\
\vcd\bigl(\Gamma_{g_F,b_m+e_F}^{b_c}\bigr), & \text{if } \Sigma_F \text{ is orientable}.
\end{cases}
\]
\end{proposition}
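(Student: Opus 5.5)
We plan to show that $\Gamma_F^*$ and $\Mod(\Sigma_{g_F,b_m+e_F}^{b_c})$ share a common finite index subgroup, up to finite kernels. The equality of $\vcd$'s then follows, because $\vcd$ is a commensurability invariant and is unchanged by finite kernels: torsion-free finite index subgroups map isomorphically. We compare both groups with an intermediate group through two finite-index steps.

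\textbf{Step 1 (orbifold to underlying surface).} An element of $\Gamma_F^*$ is represented by a diffeomorphism of $\Sigma_F$. This diffeomorphism permutes three finite sets, each preserving the relevant orders: the elliptic points, the corner points, and the boundary components of $\Sigma_F$. The kernel of the resulting action on these finite sets is a finite index subgroup $\Gamma_F^{*,0}$. Its elements fix each corner point and each elliptic point, and they preserve each boundary component. The mirror-only boundary components carry no further structure. Since isotopies may rotate them freely, we may collapse each such component to a marked point, or equivalently treat it as a puncture. Each corner boundary component has finitely many fixed corner points. As in the discussion preceding \Cref{eq:ModUnderlying}, a further finite index subgroup therefore fixes these components pointwise up to isotopy. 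This yields a homomorphism from a finite index subgroup of $\Gamma_F^*$ to $\Mod(\Sigma_{g_F,b_m+e_F}^{b_c})$, in which elliptic points and mirror-only boundaries both become marked points.

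\textbf{Step 2 (kernel and image).} We need the image to have finite index and the kernel to be finite. Going backwards, a diffeomorphism of $\Sigma_{g_F}^{b_c}$ that fixes the boundary pointwise and fixes each marked point can be isotoped to preserve the orbifold structure. This uses the extension of the diffeomorphism over collar neighbourhoods of the mirror boundaries, and the local models near corners. Hence the pure subgroup of $\Mod(\Sigma_{g_F,b_m+e_F}^{b_c})$ lies in the image, so the image has finite index. For the kernel, the potential discrepancy comes from Dehn twists about corner boundary components. These are central, and in the orbifold they become isotopic to the identity, or to finite-order elements, through isotopies that slide the corner points. Such twists generate at most a free abelian group of rank $b_c$. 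The issue is whether this loses $\vcd$. We plan to resolve it by the convention, already adopted in \Cref{eq:ModUnderlying}, that corner boundaries are fixed pointwise. With that convention the orbifold isotopies, which fix corner points, agree with isotopies rel boundary on the finite index subgroup. This step is exactly the content of \cite[Theorem 2.6 and its proof]{HSSTN24}, and we would cite it there.

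\textbf{Step 3 and main obstacle.} Combining Steps 1 and 2 gives the commensurability, and the stated formula follows by splitting according to whether $\Sigma_F$ is orientable. The delicate point is Step 2, namely matching isotopy relations near the corner reflectors and mirror boundaries. A wrong convention there changes the $\vcd$ by $b_c$. We will therefore rely on the detailed local analysis in \cite{HSSTN24}, checking only that their identification of boundary types agrees with our definitions of $b_c$ and $b_m$.
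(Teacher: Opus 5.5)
Your overall route is the paper's: the paper gives no argument beyond invoking \cite[Theorem~2.6 and its proof]{HSSTN24}, and you also defer the delicate step there. However, the reasoning you do supply for that step is wrong as stated, and taken literally it would contradict the proposition. In Step~2 you assert that twists about corner boundary components become trivial, or of finite order, in $\Gamma_F^*$ ``through isotopies that slide the corner points''. You then propose to repair this by a convention. Corner points cannot slide. An isotopy $f_t$ in $\Gamma_F^*$ is a path of orbifold diffeomorphisms, each of which permutes the finite set of corner points, so $t\mapsto f_t(c)$ is constant for every corner point $c$. Moreover, if the twists really died, the comparison map would have an infinite kernel (up to $\mathbb{Z}^{b_c}$). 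No convention about the target group could then restore commensurability, because $\Gamma_F^*$ is defined intrinsically. For instance, for the signature $(0;+;[2,2];\{(2)\})$ in the $D_2$ row of \Cref{table:orbifolds-genus-3}, you would get $\vcd(\Gamma_{0,3})=0$ instead of $\vcd(\Gamma_{0,2}^{1})=1$.

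What is needed here is a fact, not a convention. Pass to the finite-index subgroup that fixes each corner point and preserves the orientation of each corner boundary circle $C$. Restricted to $C$, such diffeomorphisms and isotopies lie in $\mathrm{Diff}^+(C\ \mathrm{rel\ corners})$. This group preserves each arc between consecutive corners and is contractible, via straight-line isotopy on each arc. Hence every representative can be isotoped to the identity on $C$. Likewise, every isotopy fixing the corners can be upgraded, using a collar, to an isotopy relative to $C$. This is exactly what makes your Step~1 homomorphism well defined and your Step~2 map injective. In particular, the twist about $C$ has infinite order in $\Gamma_F^*$ whenever it does in $\Mod(\Sigma_{g_F,b_m+e_F}^{b_c})$. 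The last sentence of your Step~2 (orbifold isotopies fix corners and agree with isotopies rel boundary) states the correct conclusion, but it contradicts the sentence before it. Drop the sliding claim and justify that last sentence as above. By contrast, a mirror-only boundary component has no fixed points, so rotations do kill its boundary twist; that is why those components legitimately become punctures.
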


\begin{theorem}\label{Thm:g:4:5:punctures}
Let $g\in\{4,5\}$ and $n\geqslant 1$. Then
\[
\underline{\gd}(\calN_{g,n})
=\underline{\cd}(\calN_{g,n})
=\vcd(\calN_{g,n}).
\]
\end{theorem}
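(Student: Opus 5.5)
We reduce to $\underline{\cd}(\calN_{g,n})\leqslant\vcd(\calN_{g,n})$. For $g\in\{4,5\}$ and $n\geqslant 1$ we have $\vcd(\calN_{g,n})=2g+n-4\geqslant 5$. Hence by \Cref{eq:InDim} we get $\vcd(\calN_{g,n})\leqslant\underline{\cd}(\calN_{g,n})=\underline{\gd}(\calN_{g,n})$, and it remains to prove the reverse inequality for $\underline{\cd}$. By \Cref{Conchita:criterion} it suffices to show that $\vcd(WF)+\lambda(F)\leqslant\vcd(\calN_{g,n})$ for every nontrivial finite subgroup $F<\calN_{g,n}$. The trivial subgroup is immediate, since then $WF=\calN_{g,n}$ and $\lambda(F)=0$.

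Let $F$ be such a subgroup. By the Birman exact sequence \Cref{eq:ses-full} and \Cref{lem:COMPARE}, $\varphi(F)\cong F$ is a finite subgroup of $\calN_g$. By Nielsen realization we may take the action of $F$ on $N_g$ to preserve the marked set $X$, so the quotient orbifold $O_F=N_g/F$ is the same one that appears for $\varphi(F)<\calN_g$. By \Cref{thm:OrbifoldMCG}, $\vcd(\Gamma_F^*)=\vcd(\Mod(\Sigma_{g_F,b_m+e_F}^{b_c}))$. Therefore \Cref{Prop:N4:HSSTN} and \Cref{Prop:N5:HSSTN} give the first hypothesis of \Cref{Bound:WF:by:OF},
\[
\vcd\big(\Mod\big(\Sigma_{g_F,b_m+e_F}^{b_c}\big)\big)+\lambda(F)\leqslant\vcd(\calN_g)+k,
\]
with $k\in\{0,1,2\}$. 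The value $k=2$ occurs only for $g=4$ and $F\cong C_2\times S_4$. We then only need to check the numerical hypothesis $n\geqslant \frac{|F|}{|F|-1}(k+\epsilon_F-1)$.

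We split into cases according to $k$ and $\epsilon_F$.
\begin{enumerate}
\item If $k+\epsilon_F\leqslant 1$, the right-hand side is $\leqslant 0$, so the hypothesis holds for every $n\geqslant 1$.
\item If $k+\epsilon_F=2$, the requirement is $n\geqslant |F|/(|F|-1)$, which holds for all $n\geqslant 2$. It also holds for $n=1$, since it is equivalent to $k+\epsilon_F+1/|F|\leqslant 2$, which fails only marginally. So $n=1$ needs separate treatment; there we instead invoke \cite[Corollary~1.3]{4Amigos_Spine_NO}, because for one marked point the full and pure mapping class groups coincide. Hence we may assume $n\geqslant 2$ throughout.
\item If $k+\epsilon_F=3$, we need $n\geqslant 2|F|/(|F|-1)$, that is, $n\geqslant 3$ as soon as $|F|\geqslant 3$.
\end{enumerate}

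The main obstacle is controlling $\epsilon_F$ exactly in the cases where $k\geqslant 1$. Note that $\epsilon_F=1$ requires $b_m+e_F+b_c=0$, so the quotient has no elliptic points and no boundary. In the cases with $k\geqslant1$ (orientable quotient, $g_F=0$), the Riemann--Hurwitz equation rules out this possibility: a quotient of genus $0$ without cone points or boundary would have positive Euler characteristic, while $N_g$ with $g\geqslant 4$ is hyperbolic. Also $g_F\in\{0,1\}$ orientable gives $\epsilon_F=0$ anyway. So in those cases $\epsilon_F=0$, and we have $k+\epsilon_F\leqslant 2$ except for $C_2\times S_4$. In the exceptional case, the quotient is $(0;+;[-];\{(2,4,6)\})$, so $b_c=1$ and $\epsilon_F=0$. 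Then $k+\epsilon_F=2$, and the condition is $n\geqslant 48/47$, which holds for $n\geqslant 2$.

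In the remaining cases $k=0$, so $k+\epsilon_F\leqslant 1$ and the condition holds for all $n$. Thus \Cref{Bound:WF:by:OF} yields $\vcd(WF)+\lambda(F)\leqslant\vcd(\calN_{g,n})$ for all $n\geqslant 2$, which completes the argument.
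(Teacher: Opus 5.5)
Your argument is correct and follows essentially the same route as the paper: you reduce to $\underline{\cd}$ via \Cref{eq:InDim}, treat $n=1$ through the coincidence of pure and full mapping class groups, and for $n\geqslant 2$ verify the hypotheses of \Cref{Bound:WF:by:OF} using \Cref{Prop:N4:HSSTN}, \Cref{Prop:N5:HSSTN} and \Cref{thm:OrbifoldMCG}. Your observation that $\epsilon_F=0$ whenever $k\geqslant 1$ is a harmless sharpening of the paper's cruder estimate $\frac{|F|}{|F|-1}(k+\epsilon_F-1)\leqslant 2$. One sentence in case (2) needs fixing: at $n=1$ the condition $1\geqslant |F|/(|F|-1)$ genuinely fails, which is exactly why $n=1$ is handled separately, so delete the claim that it ``also holds for $n=1$''.
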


\begin{proof} Let $g\in\{4,5\}$ and $n\geqslant 1$.
First, notice that
$\vcd(\calN_{g,n})=2g+n-4\geqslant 5$; hence, it follows from \Cref{eq:InDim} that
$\underline{\gd}(\calN_{g,n})=\underline{\cd}(\calN_{g,n})$.
Hence we only need to prove the second equality.

When $n=1$, the {\it pure} and {\it full} mapping class groups
agree, and from
\cite[Corollary~1.3]{4Amigos_Spine_NO}, we have
$\underline{\gd}(\calN_{g,1})=\vcd(\calN_{g,1})$.

Assume from now on that $n\geqslant 2$. By \Cref{Conchita:criterion}, to conclude $\underline{\cd}(\calN_{g,n})\leqslant \vcd(\calN_{g,n})$ it is enough to show that for every finite subgroup $F\leqslant \calN_{g,n}$ we have
\[
\vcd(WF)+\lambda(F)\leqslant \vcd(\calN_{g,n}).
\]

We next show that the hypotheses of \Cref{Bound:WF:by:OF} are satisfied for every finite subgroup of $\calN_{g,n}$ when $g\in\{4,5\}$ and $n\geqslant 2$, which gives the required inequality.

Let $F$ be any finite subgroup of $\modnn$. By \Cref{lem:COMPARE}, we have $\varphi(F)\cong F$, where $\varphi:\modnn\to\calN_g$ is the forgetful homomorphism. Then
we may regard $F$ as a finite
subgroup of $\calN_g$ and apply
\Cref{Prop:N4:HSSTN,Prop:N5:HSSTN} to the quotient orbifold $O_F=N_g/F$.

Suppose first that $g=5$. By \Cref{Prop:N5:HSSTN,thm:OrbifoldMCG}, there exists
$k\in\{0,1\}$ such that
\[
\vcd\big(\Mod\big(\Sigma_{g_F,b_m+e_F}^{b_c}\big)\big)+\lambda(F)\leqslant \vcd(\calN_5)+k.
\]
Since $\epsilon_F\leqslant 1$ and $|F|/(|F|-1)\leqslant 2$, we have
\[
\frac{|F|}{|F|-1}(k+\epsilon_F-1)\leqslant 2\leqslant n.
\]
Hence the hypotheses of \Cref{Bound:WF:by:OF} hold.

Now let $g=4$. By \Cref{Prop:N4:HSSTN,thm:OrbifoldMCG}, there exists
$k\in\{0,1,2\}$ such that
\[
\vcd\big(\Mod\big(\Sigma_{g_F,b_m+e_F}^{b_c}\big)\big)+\lambda(F)\leqslant \vcd(\calN_4)+k.
\]
If $k\leqslant 1$, the previous argument applies. If $k=2$, then we are in
case~(4) of \Cref{Prop:N4:HSSTN}. In particular, $\Sigma_F$ is orientable with
$g_F=0$, so $\epsilon_F=0$ by \Cref{Prop:Bound:WF}. Therefore
\[
\frac{|F|}{|F|-1}(k+\epsilon_F-1)
=\frac{|F|}{|F|-1}\leqslant 2\leqslant n,
\]
and \Cref{Bound:WF:by:OF} applies again.
\end{proof}

\section{Genus $\geqslant 6$}\label{sec:GENUS6}

In this section, we extend the results in \cite{HSSTN24} for $\calN_g$ to $\calN_{g,n}$ when $g\geqslant 6$. We follow the strategy of \cite[Section 6]{4Amigos_FullMCG}.

For convenience, in this section we work with $NF$ instead of $WF$. Since we are only interested in their virtual cohomological dimensions and $\vcd(NF)=\vcd(WF)$, they are interchangeable.

\begin{proposition}\cite[Theorems~5.1, 5.2, 5.4, $\&$ proof of Theorem~1.1]{HSSTN24}\label{prop:BOUNDCLOSED}
    For $g\geqslant 6$ and any finite subgroup $F$ of $\calN_g$ we have 
    \[
  \vcd\bigl(NF\bigr)+\lambda(F)
  \leqslant
  \vcd\bigl(\calN_g\bigr).
\]
\end{proposition}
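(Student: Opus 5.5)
The statement \Cref{prop:BOUNDCLOSED} is cited from \cite{HSSTN24}, so the plan is to assemble it from the results there rather than reprove it from scratch. Two preliminary reductions come first. Since $F$ is finite and normal in $NF$, we have $\vcd(NF)=\vcd(WF)$. By the Birman--Hilden argument in the proof of \Cref{lem:vcd:Weyl:Group} (valid since $g\geqslant 3$), $WF$ embeds in $\Gamma_F^*$. Hence it suffices to prove
\[
\vcd(\Gamma_F^*)+\lambda(F)\leqslant \vcd(\calN_g)=2g-5
\]
for every nontrivial finite $F<\calN_g$; the case $F=1$ is trivial. By \Cref{thm:OrbifoldMCG}, $\vcd(\Gamma_F^*)$ equals the $\vcd$ of $\Mod(\Sigma_{g_F,b_m+e_F}^{b_c})$, which the formulas of \Cref{Sec:MCG} express in terms of $g_F,b_m,b_c,e_F$.

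The case split follows \cite[Theorems~5.1, 5.2, 5.4]{HSSTN24}, according to the type of $O_F$.

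\emph{Non-orientable $\Sigma_F$.} Here $\vcd(\Gamma_F^*)\approx 2g_F+e_F+b_m+2b_c-4$. I would insert the Riemann--Hurwitz equation (the analogue of \Cref{eq:RH} with $\chi(N_g)=2-g$),
\[
g-2=|F|\bigl(g_F+b-2+\textstyle\sum(1-1/m_i)+\tfrac12\sum(1-1/n_{ij})\bigr),
\]
to bound $g_F+e_F+b$ above by roughly $(g-2)/|F|+2$. Each elliptic point, boundary component and corner contributes at least a fixed positive amount to the bracket. Combining this with $\lambda(F)\leqslant\log_2|F|$ from \Cref{lem:UpperBoundLength} gives
\[
\vcd+\lambda\leqslant \frac{2(g-2)}{|F|}+O(1)+\log_2|F|,
\]
which is at most $2g-5$ once $|F|\geqslant 2$ and $g\geqslant 6$. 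Small $|F|$ is handled directly.

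\emph{Orientable $\Sigma_F$ with $g_F\geqslant1$, or with $g_F=0$ and many singular points.} The same estimate applies, now using $\eta=2$.

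\emph{The main obstacle: $g_F=0$ with $e_F+b<3$.} In this regime $\vcd(\Gamma_F^*)$ is tiny, at most $2$, but $|F|$ can be as large as the Hurwitz-type bound $12(g-1)$. The issue is therefore entirely $\lambda(F)$, and one needs $\lambda(F)\leqslant 2g-5-\vcd(\Gamma_F^*)$. I would use $\lambda(F)\leqslant \log_2(12(g-1))$, the maximal order of a finite group acting on $N_g$. This gives
\[
\log_2(12(g-1))+2\leqslant 2g-5,
\]
which holds for all $g\geqslant 6$: at $g=6$ it reads $\log_2 60+2\approx 7.9$, which appears to fail against $2g-5=7$. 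Exactly this borderline is why $g=4,5$ had to be treated separately, and it may also bite at $g=6$.

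So at $g=6$ I would refine the estimate. The first step is to observe that $\vcd(\Gamma_F^*)=2$ forces a signature such as $\Gamma^{1}_{0,1}$ with $b_c=1$, which constrains $|F|$ further. The second step is to use $\Omega(|F|)$ in place of $\log_2|F|$, checking the finitely many admissible orders $|F|\leqslant 60$. This is the finite check carried out in \cite[proof of Theorem~1.1]{HSSTN24}, which I would invoke to close the argument.
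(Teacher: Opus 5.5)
Your reduction ($\vcd(NF)=\vcd(WF)\leqslant\vcd(\Gamma_F^*)$ via Birman--Hilden, then a split by the type of $O_F$) is the framework behind the citation. The paper gives no argument of its own and simply imports the bound from \cite{HSSTN24}; if you just invoke that reference, nothing more is needed. The argument you sketch for the decisive case, however, would not establish the bound, because it rests on a false input.

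\textbf{The order bound is wrong.} The largest order of a finite group acting on $N_g$ is not bounded by $12(g-1)$. The a priori bound is Singerman's $|F|\leqslant 84(g-2)$, and it comes from the disk quotient $(0;+;[-];\{(2,3,7)\})$, i.e.\ precisely from your regime $g_F=0$, $e_F+b<3$. The paper itself contains a counterexample to your bound: $C_2\times S_4$, of order $48>36=12(g-1)$, acts on $N_4$ (\Cref{Prop:N4:HSSTN}(4)).

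So at $g=6$ your check over ``admissible orders $|F|\leqslant 60$'' misses real candidates. The Riemann--Hurwitz equation $4=|F|\bigl(-1+\tfrac12\sum_j(1-1/n_j)\bigr)$ for a disk with corners $(n_1,\dots,n_k)$ allows $|F|=96,144,160,192,336$, coming from $(2,4,6)$, $(2,3,9)$, $(2,4,5)$, $(2,3,8)$, $(2,3,7)$. Replacing $\log_2$ by $\Omega$ over the correct range $|F|\leqslant 336$ does not rescue the argument either, since $\Omega(256)=8>7=2g-5$.

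\textbf{What closes $g=6$.} You need the Riemann--Hurwitz restriction on the signature, not just a bound on the order.
\begin{itemize}
\item In your regime the formulas give $\vcd(\Gamma_F^*)\leqslant 1$, with equality only for $\Gamma^2_{0,0}$. That case has two boundary components with corners, which forces $|F|\leqslant 2(g-2)$. In particular $\vcd(\Gamma^1_{0,1})=0$, not $2$ as you suggest.
\item For a disk quotient ($b=1$) one has $\vcd(\Gamma_F^*)=0$. The only admissible area below $1/64$ is $1/84$, from $(2,3,7)$. Hence either $|F|=336$, with $\Omega(336)=6$, or $|F|\leqslant 192<256$ and $\lambda(F)\leqslant 7$.
\end{itemize}

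\textbf{Two further points.}
\begin{itemize}
\item The sharper value $\vcd(\Gamma_F^*)\leqslant 1$ also matters at $g=7$. There, with the correct order bound, $\lfloor\log_2 420\rfloor+1=9=2g-5$, which leaves no room for your ``$+2$''.
\item In the non-orientable-quotient case your estimate needs an explicit upper bound on $|F|$ and a specified $O(1)$. Riemann--Hurwitz gives $\vcd(\Gamma_F^*)\leqslant 2(g-2)/|F|$, so the $O(1)$ is $0$. It also gives $|F|\leqslant 6(g-2)$, since the minimal area is $1/6$, from $(1;-;[2,3];\{-\})$. Together these suffice for $g\geqslant 6$.
\end{itemize}
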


\begin{theorem}\label{prop:Genus6}
For any $g\geqslant 6$, $n\geqslant 1$, and any finite subgroup $F$ of $\modnn$, we have 
\[
  \vcd(NF) + \lambda(F)
  \leqslant
  \vcd\bigl(\modnn\bigr).
\]
In particular,  
$\underline{\gd}(\modnn)=\cdfin(\modnn)= \vcd(\modnn)$.
\end{theorem}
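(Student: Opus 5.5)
The plan is to follow \cite[Section 6]{4Amigos_FullMCG} and compare normalizers in $\modnn$ with normalizers in $\calN_g$ through the Birman exact sequence \eqref{eq:ses-full}. Let $F<\modnn$ be finite and let $\varphi\colon\modnn\to\calN_g$ be the forgetful map. By \Cref{lem:COMPARE}, $\varphi(F)\cong F$, so $\lambda(F)=\lambda(\varphi(F))$. Moreover $\varphi(NF)\subseteq N\varphi(F)$, where the second normalizer is taken in $\calN_g$. Restricting \eqref{eq:ses-full} to $NF$ gives the short exact sequence
\[
1\to NF\cap B_n(N_g)\to NF\to \varphi(NF)\to 1.
\]
By subadditivity of $\vcd$ in extensions, and since $\varphi(NF)$ is a subgroup of $N\varphi(F)$,
\[
\vcd(NF)\leqslant \cd\bigl(NF\cap B_n(N_g)\bigr)+\vcd\bigl(N\varphi(F)\bigr).
\]
Here we use that $B_n(N_g)$ is torsion free.

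Next I bound the kernel term. The subgroup $NF\cap B_n(N_g)$ lies in $B_n(N_g)$, which is the fundamental group of the unordered configuration space $\mathrm{UConf}_n(N_g)$. This is an aspherical open $2n$-manifold for $g\geqslant 3$, hence homotopy equivalent to a complex of dimension $n$, and so $\cd(B_n(N_g))=n$. Therefore $\cd(NF\cap B_n(N_g))\leqslant n$. Combining this with \Cref{prop:BOUNDCLOSED} applied to $\varphi(F)<\calN_g$ gives
\[
\vcd(NF)+\lambda(F)\leqslant n+\vcd(\calN_g)=n+2g-5=\vcd(\modnn)-1,
\]
which is even stronger than needed. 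If the bound $\cd(B_n(N_g))=n$ is not available, the weaker bound $\cd\leqslant n+1$ already suffices, since $2g-5+n+1=\vcd(\modnn)$.

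Finally, the conclusion $\underline{\gd}(\modnn)=\cdfin(\modnn)=\vcd(\modnn)$ follows from \Cref{Conchita:criterion}, using $\vcd(WF)=\vcd(NF)$. Since $\vcd(\modnn)=2g+n-4\geqslant 9$, \Cref{eq:InDim} then gives equality with $\underline{\gd}$.

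The main obstacle is justifying the dimension bound for the braid subgroup. I would first try the fibration argument: the ordered configuration space fibers iteratively with fibres $N_g$ minus finitely many points, each of which is a free-group $K(\pi,1)$ of dimension $1$. The base $N_g$ contributes $\cd(\pi_1N_g)=2$, so the ordered braid group has $\cd\leqslant n+1$. Passing to the finite-index full braid group preserves $\cd$ because it is torsion free. This bound of $n+1$ is exactly what is needed.
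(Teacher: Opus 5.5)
Your route is the paper's route: restrict the Birman exact sequence \eqref{eq:ses-full} to $NF$, use subadditivity of $\vcd$, use \Cref{lem:COMPARE} for $\lambda$, and apply \Cref{prop:BOUNDCLOSED} to $\varphi(F)$. What needs fixing is your primary bound on the kernel term.

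The claim $\cd(B_n(N_g))=n$ is false. Already for $n=1$, $B_1(N_g)=\pi_1(N_g)$ has cohomological dimension $2$. In general $\cd(B_n(N_g))=n+1$ for the closed surface $N_g$; this is \cite[Theorem 1.2]{MR3869010}, which the paper quotes.

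Your justification also fails. For $n=1$ the space $\mathrm{UConf}_1(N_g)=N_g$ is compact, not open. For $n\geqslant 2$, a non-compact $2n$-manifold is only guaranteed to have the homotopy type of a complex of dimension at most $2n-1$. An $n$-dimensional spine exists for configuration spaces of surfaces with punctures or boundary, not of closed surfaces.

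Your own ``even stronger'' conclusion exposes the error. For $F=1$ you have $NF=\modnn$ and $\lambda(F)=0$, so the bound $\vcd(NF)+\lambda(F)\leqslant \vcd(\modnn)-1$ would read $\vcd(\modnn)\leqslant\vcd(\modnn)-1$.

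Your fallback argument is correct and is exactly what is needed. The Fadell--Neuwirth fibrations have aspherical base $N_g$ and fibres that are $N_g$ minus finitely many points. They give $\cd(P_n(N_g))\leqslant 2+(n-1)=n+1$ for the pure braid group. Since $B_n(N_g)$ is torsion-free and contains $P_n(N_g)$ with finite index, Serre's theorem gives $\cd(B_n(N_g))\leqslant n+1$.

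With this bound the chain closes with equality, $(n+1)+\vcd(\calN_g)=\vcd(\modnn)$, exactly as in the paper. The paper cites the exact value $n+1$ rather than deriving the upper bound. So delete the $\cd=n$ paragraph and the ``stronger than needed'' remark, and keep the fibration argument. The rest, including the final appeal to \Cref{Conchita:criterion} and \eqref{eq:InDim}, is fine.
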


\begin{proof}

Let $F$ be any finite subgroup of $\modnn$. By \Cref{lem:COMPARE}, $\varphi(F)$ is a finite subgroup of $\calN_g$ isomorphic to $F$, and $\lambda\bigl(\varphi(F)\bigr)=\lambda(F)$.
Moreover, notice that $\varphi(NF)\leqslant N\big(\varphi(F)\big)$. Restricting the Birman exact sequence~\eqref{eq:ses-full} to the normalizer $NF$ yields
\begin{equation*}\label{eq:ses-restricted}
1 \longrightarrow B_{n}(N_{g}) \cap NF 
  \longrightarrow NF 
  \xrightarrow{\varphi} 
  \varphi(NF)
  \longrightarrow 1.
\end{equation*}
Applying the subadditivity and monotonicity of $\vcd$ to the previous short exact sequence, we obtain
\[
  \vcd(NF)
  \leqslant
  \vcd\bigl(B_{n}(N_{g}) \cap NF\bigr)
  +
  \vcd\bigl(N\big(\varphi(F)\big)\bigr).
\]
Since $ \cdfin\big(B_{n}(N_{g})\big)=\text{cd}\big(B_{n}(N_{g})\big)=n+1$ by \cite[Theorem 1.2]{MR3869010}, we have
\begin{align*}
  \vcd(NF) + \lambda(F)
  &\leqslant (n+1) + \vcd\bigl(N\big(\varphi(F)\big)\bigr) + \lambda\bigl(\varphi(F)\bigr)\\[4pt]
  &\leqslant (n+1) + \vcd\bigl(\calN_g\bigr)= \vcd\bigl(\modnn\bigr),\end{align*}
where the last inequality is obtained by applying \Cref{prop:BOUNDCLOSED} to the finite subgroup $\varphi(F)$ when $g\geqslant 6$. The {\it in particular} part then follows from \Cref{Conchita:criterion} and \Cref{eq:InDim}.
\end{proof}

\appendix

\section{Code used in the proof of \Cref{prop:orbifolds-genus-3}}\label{Appendix:code:genus-3}

The following program produces the quotient orbifold candidates listed in
\Cref{table:orbifolds-genus-3}. For each finite group $F$ that can act on
$N_3$, it enumerates signatures using the notation introduced in
\cref{sec:genus-3} and keeps those satisfying the Riemann--Hurwitz equation (\ref{eq:RH}).
%\[
 %-\frac1{|F|}
 %=\chi_{\mathrm{orb}}(O_F)
 %=\chi(S_F)
  %-\sum_{i=1}^{e_F}\left(1-\frac1{m_i}\right)
  %-\frac12\sum_{j=1}^{c_F}\left(1-\frac1{q_j}\right),
%\]
%where $\chi(\Sigma_F)=2-2g_F-b$ if $\Sigma_F$ is orientable and$\chi(\Sigma_F)=2-g_F-b$ otherwise.

%po{\color{red} REDACCIÓN: The search bounds follow from the same equation.} 
Since $|F|\geqslant 2$, one
has $\chi(\Sigma_F)\geqslant 0$, and hence $b\leqslant 2$. Each elliptic point
contributes at least $1/2$, so $e_F\leqslant 5$. If corner points occur, then
$|F|\geqslant 4$ and each contributes at least $1/4$, giving
$c_F\leqslant 5$. These are the bounds used in the program.

Thus the only possible underlying surfaces are the orientable surfaces of
genus $0$ or $1$ and the non-orientable surfaces of genus $1$ or $2$, with the
number of boundary components restricted by $\chi(\Sigma_F)\geqslant 0$. Moreover,
the order of an elliptic point is the order of a cyclic subgroup of $F$, while
a corner of order $q$ requires a dihedral subgroup $D_q\leqslant F$. The lists
in the program contain all such orders for each of the groups in
\cite[Proposition~1]{BEM14}. Finally, two boundary components cannot both
contain corners: in that case $b=2$, whereas two corners contribute at least
$1/2 \geq 1/|F|$. Hence the program considers every signature satisfying these
necessary conditions.
  
The realizability of these signatures 
requires a surface-kernel epimorphism from the corresponding NEC group onto
$F$, and is not claimed.

%The resulting signatures are necessary conditions. 
%Their realizability also requires a surface-kernel epimorphism from the corresponding NEC group onto $F$.

% (lstinputlisting) orbifold_signatures_N3.py
\begin{lstlisting}[
 language=Python,
 basicstyle=\ttfamily\footnotesize,
 breaklines=true,
 columns=fullflexible
]
from fractions import Fraction as F
from itertools import combinations_with_replacement


# G: (order, cone orders, corner orders, reflections allowed)
GROUPS = {
    "C2": (2, (2,), (), True),
    "C3": (3, (3,), (), False),
    "C4": (4, (2, 4), (), True),
    "C6": (6, (2, 3, 6), (), True),
    "D2": (4, (2,), (2,), True),  # D2 is isomorphic to C2 x C2
    "D3": (6, (2, 3), (3,), True),
    "D4": (8, (2, 4), (2, 4), True),
    "D6": (12, (2, 3, 6), (2, 3, 6), True),
}

# (orientability, genus, Euler characteristic of the closed surface)
SURFACES = (("+", 0, 2), ("+", 1, 0), ("-", 1, 1), ("-", 2, 0))


def choices(orders):
    for size in range(6):
        yield from combinations_with_replacement(orders, size)


def defect(orders, factor=1):
    return factor * sum((1 - F(1, n) for n in orders), F(0))


total = 0

for group, (order, cone_orders, corner_orders, reflections) in GROUPS.items():
    print(("\n" if total else "") + f"G = {group}")

    for sign, genus, chi_surface in SURFACES:
        for empty in range(3):
            for cones in choices(cone_orders):
                for corners in choices(corner_orders):
                    boundary = empty + bool(corners)
                    if boundary > 2 or (boundary and not reflections):
                        continue

                    chi = chi_surface - boundary
                    chi -= defect(cones) + defect(corners, F(1, 2))
                    if chi != -F(1, order):
                        continue

                    cone_data = ",".join(map(str, cones)) or "-"
                    cycles = [f"({','.join(map(str, corners))})"] if corners else []
                    boundary_data = ",".join(cycles + ["()"] * empty) or "-"

                    print(f"  ({genus}; {sign}; [{cone_data}]; {{{boundary_data}}})")
                    total += 1

print(f"\nTotal: {total}")
assert total == 36
\end{lstlisting}

\section{Code used in the proof of \Cref{prop:upperbound:ProjectivePlane}}\label{Appendix:code:projective-plane}

\begin{lstlisting}
from fractions import Fraction
from math import ceil

# Function to count prime factors with multiplicity
def Omega(F):
    count = 0
    d = 2
    while d * d <= F:
        while F % d == 0:
            count += 1
            F //= d
        d += 1
    if F > 1:
        count += 1
    return count

# We use the bound |F| <= max{4n, 60} from
# Proposition \ref{propo: vcdOrbifoldProjectivePlane}.
large_n_exceptions = []
small_n_bounds = []

# The program verifies the finite range 3 <= n <= 30. The proof treats
# n >= 31 separately using Omega(|F|) <= log_2(|F|).
for n in range(3, 31):
    values = []
    for F in range(2, max(4*n, 60) + 1):
        value = Fraction(n, F) + Omega(F)
        values.append(value)
        if n >= 7 and value >= n - 1:
            large_n_exceptions.append((n, F, Omega(F), value))
    if 3 <= n <= 6:
        small_n_bounds.append((n, max(values)))

print("Exceptions for 7 <= n <= 30:", large_n_exceptions)
print("Bounds for 3 <= n <= 6:", small_n_bounds)
\end{lstlisting}

\section{Code used in the proof of \Cref{prop:upperbound:KB}}\label{Appendix:code:n:F}

\begin{lstlisting}
from fractions import Fraction
# Function to count prime factors with multiplicity
def Omega(F):
    # We compute Omega(F) by trial division: every time a prime divisor
    # appears, it is counted once for each occurrence in the factorization.
    count = 0
    d = 2
    while d * d <= F:
        while F % d == 0:
            count += 1
            F //= d
        d += 1
    if F > 1:
        count += 1
    return count


def possible_orders(n):
    """
    Orders allowed by Proposition \ref{prop:FiniteGroupKlein}:
    |F| = d*m, where d divides 2n and m is 1, 2, 3, 4, or 6.
    """
    # We store the orders in a set to avoid repetitions, since the same
    # integer can arise from different choices of d and m.
    orders = set()

    # First choose a divisor d of 2n, and then multiply it by one of the
    # possible values of m given by the proposition.
    for d in range(1, 2*n + 1):
        if (2*n) % d == 0:
            for m in [1, 2, 3, 4, 6]:
                if d*m > 1:
                    orders.add(d*m)
    return sorted(orders)


for n in range(2, 16):
    # For each value of n, we evaluate the quantity n/|F| + Omega(|F|)
    # over all possible orders |F| allowed by the proposition.
    values = []
    for F in possible_orders(n):
        value = Fraction(n, F) + Omega(F)
        values.append((value, F, Omega(F)))

    # The maximum gives the worst upper bound among all possible finite
    # subgroups F for this value of n.
    maximum, order, omega = max(values)
    print(
        f"n={n}: max n/F+Omega(F) = {maximum} "
        f"at |F|={order}, Omega={omega}; "
        f"integer bound = {int(maximum)}"
    )

    if 5 <= n <= 15:
        # For n >= 5, this verifies the desired inequality used in the proof.
        assert maximum <= n

# The remaining small values of n require separate numerical bounds.
small_bounds = {4: 5, 3: 4, 2: 4}
for n, bound in small_bounds.items():
    # Since vcd(WF)+lambda(F) is an integer, it is enough to check that
    # the rational maximum is strictly smaller than bound + 1.
    maximum = max(Fraction(n, F) + Omega(F) for F in possible_orders(n))
    assert maximum < bound + 1
\end{lstlisting}

\bibliographystyle{alpha} \bibliography{ref}

@article {AMP14,
    AUTHOR = {Aramayona, {J}. and Mart\'{\i}nez-P\'{e}rez, {C}.},
     TITLE = {The proper geometric dimension of the mapping class group},
   JOURNAL = {Algebr. Geom. Topol.},
  FJOURNAL = {Algebraic \& Geometric Topology},
    VOLUME = {14},
      YEAR = {2014},
    NUMBER = {1},
     PAGES = {217--227},
      ISSN = {1472-2747},
   MRCLASS = {20F65 (20J05 57M07 57N05)},
  MRNUMBER = {3158758},
MRREVIEWER = {Sang-hyun Kim},
       DOI = {10.2140/agt.2014.14.217},
       URL = {https://doi.org/10.2140/agt.2014.14.217},
}

@article {MR3869010,
    AUTHOR = {Gon\c{c}alves, {D}. {L}. and Guaschi, {J}. and Maldonado,
              {M}.},
     TITLE = {Embeddings and the (virtual) cohomological dimension of the
              braid and mapping class groups of surfaces},
   JOURNAL = {Confluentes Math.},
  FJOURNAL = {Confluentes Mathematici},
    VOLUME = {10},
      YEAR = {2018},
    NUMBER = {1},
     PAGES = {41--61},
   MRCLASS = {57N05 (20F36 20F38 20J06 55P20 55R80 57M07)},
  MRNUMBER = {3869010},
MRREVIEWER = {B\l a\.{z}ej Szepietowski},
       DOI = {10.5802/cml.45},
       URL = {https://doi-org.pbidi.unam.mx:2443/10.5802/cml.45},
}

@article{VcdBraid,
  author  = {Gon{\c{c}}alves, {D}. {L}. and Guaschi, {J}.},
  title   = {Inclusion of configuration spaces in Cartesian products,
             and the virtual cohomological dimension of the braid groups
             of {$S^2$} and {${\mathbb R}P^2$}},
  journal = {Pacific Journal of Mathematics},
  volume  = {287},
  number  = {1},
  year    = {2017},
  pages   = {71--99},
  doi     = {10.2140/pjm.2017.287.71}
}

@incollection {BraidSurvey,
    AUTHOR = {Guaschi, J. and Juan-Pineda, D.},
     TITLE = {A survey of surface braid groups and the lower algebraic
              {$K$}-theory of their group rings},
 BOOKTITLE = {Handbook of group actions. {V}ol. {II}},
    SERIES = {Adv. Lect. Math. (ALM)},
    VOLUME = {32},
     PAGES = {23--75},
 PUBLISHER = {Int. Press, Somerville, MA},
      YEAR = {2015},
   MRCLASS = {20F36 (19A31 19B28)},
  MRNUMBER = {3382024},
MRREVIEWER = {Inasa Nakamura},
}

@InCollection{Lu05,
  Title                    = {Survey on classifying spaces for families of subgroups},
  Author                   = {L{\"u}ck, W.},
  Booktitle                = {Infinite groups: geometric, combinatorial and dynamical aspects},
  Publisher                = {Birkh\"auser, Basel},
  Year                     = {2005},
  Pages                    = {269--322},
  Series                   = {Progr. Math.},
  Volume                   = {248}
}

@article {BEM14,
    AUTHOR = {Bujalance, E. and Etayo, J. J. and Mart\'inez, E.},
     TITLE = {The full group of automorphisms of non-orientable unbordered
              {K}lein surfaces of topological genus 3, 4 and 5},
   JOURNAL = {Rev. Mat. Complut.},
  FJOURNAL = {Revista Matem\'atica Complutense},
    VOLUME = {27},
      YEAR = {2014},
    NUMBER = {1},
     PAGES = {305--326},
      ISSN = {1139-1138,1988-2807},
   MRCLASS = {57M60 (20B25)},
  MRNUMBER = {3149189},
MRREVIEWER = {Adnan\ Meleko\u glu},
       DOI = {10.1007/s13163-013-0121-7},
       URL = {https://doi-org.pbidi.unam.mx:2443/10.1007/s13163-013-0121-7},
}

@article {CX24,
    AUTHOR = {Colin, N. and Xicot\'encatl, M. A.},
     TITLE = {The {N}ielsen realization problem for non-orientable surfaces},
   JOURNAL = {Topology Appl.},
  FJOURNAL = {Topology and its Applications},
    VOLUME = {353},
      YEAR = {2024},
     PAGES = {Paper No. 108957, 13},
      ISSN = {0166-8641,1879-3207},
   MRCLASS = {57K20 (30F10 30F50 30F60 32G15 57M07)},
  MRNUMBER = {4755494},
       DOI = {10.1016/j.topol.2024.108957},
       URL = {https://doi.org/10.1016/j.topol.2024.108957},
}

@article {Conway,
    AUTHOR = {Conway, {J}.{H}. and Huson, {D}.{H}.},
     TITLE = {The Orbifold Notation for Two-Dimensional Groups},
   JOURNAL = {Structural Chemistry},
  FJOURNAL = {M\"{u}nster Journal of Mathematics},
    VOLUME = {13},
      YEAR = {2002},
     PAGES = {247--257},
}

@article {4Amigos_FullMCG,
    AUTHOR = {Colin, {N}. and Jim\'enez Rolland, R. and Le\'on
              \'Alvarez, P. L. and S\'anchez Salda\~na, L. J.},
     TITLE = {The proper geometric dimension of the mapping class group of
              an orientable surface with punctures},
   JOURNAL = {J. Group Theory},
  FJOURNAL = {Journal of Group Theory},
    VOLUME = {28},
      YEAR = {2025},
    NUMBER = {6},
     PAGES = {1477--1500},
      ISSN = {1433-5883,1435-4446},
   MRCLASS = {20F65 (20F38 20J06)},
  MRNUMBER = {4979231},
       DOI = {10.1515/jgth-2024-0227},
       URL = {https://doi.org/10.1515/jgth-2024-0227},
}

@incollection {Earle,
    AUTHOR = {Earle, C. J.},
     TITLE = {Diffeomorphisms and automorphisms of compact hyperbolic
              2-orbifolds},
 BOOKTITLE = {Geometry of {R}iemann surfaces},
    SERIES = {London Math. Soc. Lecture Note Ser.},
    VOLUME = {368},
     PAGES = {139--155},
 PUBLISHER = {Cambridge Univ. Press, Cambridge},
      YEAR = {2010},
   }

@article {HSSTN24,
    AUTHOR = {Hidber, C. E. and S\'anchez Salda\~na, L. J. and
              Trujillo-Negrete, A.},
     TITLE = {On the dimensions of mapping class groups of non-orientable
              surfaces},
   JOURNAL = {Homology Homotopy Appl.},
  FJOURNAL = {Homology, Homotopy and Applications},
    VOLUME = {24},
      YEAR = {2022},
    NUMBER = {1},
     PAGES = {347--372},
      ISSN = {1532-0073,1532-0081},
   MRCLASS = {20F34 (20F65 20J05)},
  MRNUMBER = {4432529},
MRREVIEWER = {Mustafa\ Korkmaz},
       DOI = {10.4310/hha.2022.v24.n1.a17},
       URL = {https://doi-org.pbidi.unam.mx:2443/10.4310/hha.2022.v24.n1.a17},
}

@article {Zieschang73,
    AUTHOR = {Zieschang, H.},
     TITLE = {On the homeotopy group of surfaces},
   JOURNAL = {Math. Ann.},
  FJOURNAL = {Mathematische Annalen},
    VOLUME = {206},
      YEAR = {1973},
     PAGES = {1--21},
      ISSN = {0025-5831,1432-1807},
   MRCLASS = {30A60 (57A05)},
  MRNUMBER = {335794},
MRREVIEWER = {J.\ S.\ Birman},
       DOI = {10.1007/BF01431525},
       URL = {https://doi.org/10.1007/BF01431525},
}

@article {BLN01,
    AUTHOR = {Brady, {N}. and Leary, {I}. J. and Nucinkis, {B}. {E}. {A}.},
     TITLE = {On algebraic and geometric dimensions for groups with torsion},
   JOURNAL = {J. London Math. Soc. (2)},
  FJOURNAL = {Journal of the London Mathematical Society. Second Series},
    VOLUME = {64},
      YEAR = {2001},
    NUMBER = {2},
     PAGES = {489--500},
      ISSN = {0024-6107},
   MRCLASS = {20J05 (20F05 20F65 57M07)},
  MRNUMBER = {1853466},
       DOI = {10.1017/S002461070100246X},
}

@article {4Amigos_Spine_NO,
    AUTHOR = {Colin, {N}. and Jim\'enez Rolland, {R}. and Le\'on
              \'Alvarez, {P}. {L}. and S\'anchez Salda\~na, {L}. {J}.},
     TITLE = {A spine for the decorated {T}eichm\"uller space of a punctured
              non-orientable surface},
   JOURNAL = {Bull. Lond. Math. Soc.},
  FJOURNAL = {Bulletin of the London Mathematical Society},
    VOLUME = {57},
      YEAR = {2025},
    NUMBER = {12},
     PAGES = {3749--3767},
      ISSN = {0024-6093,1469-2120},
   MRCLASS = {57K20 (20J05 55R35 57M07 57M60)},
  MRNUMBER = {5007373},
       DOI = {10.1112/blms.70228},
       URL = {https://doi-org.pbidi.unam.mx:2443/10.1112/blms.70228},
}

@article {stukow,
    AUTHOR = {Stukow, M.},
     TITLE = {Conjugacy classes of finite subgroups of certain mapping class
              groups},
   JOURNAL = {Turkish J. Math.},
  FJOURNAL = {Turkish Journal of Mathematics},
    VOLUME = {28},
      YEAR = {2004},
    NUMBER = {2},
     PAGES = {101--110},
      ISSN = {1300-0098,1303-6149},
   MRCLASS = {57M60 (20E45 20F38 57N05)},
  MRNUMBER = {2062555},
MRREVIEWER = {Mustafa\ Korkmaz},
}

@article {MAHER,
    AUTHOR = {Maher, {J}.},
     TITLE = {Random walks on the mapping class group},
   JOURNAL = {Duke Math. J.},
  FJOURNAL = {Duke Mathematical Journal},
    VOLUME = {156},
      YEAR = {2011},
    NUMBER = {3},
     PAGES = {429--468},
      ISSN = {0012-7094,1547-7398},
   MRCLASS = {37E30 (20F65 20H10 32G15 37A50 60G50)},
  MRNUMBER = {2772067},
MRREVIEWER = {Jayadev\ S.\ Athreya},
       DOI = {10.1215/00127094-2010-216},
       URL = {https://doi.org/10.1215/00127094-2010-216},
}

@article {KOR,
    AUTHOR = {Korkmaz, {M}.},
     TITLE = {Mapping class groups of nonorientable surfaces},
   JOURNAL = {Geom. Dedicata},
  FJOURNAL = {Geometriae Dedicata},
    VOLUME = {89},
      YEAR = {2002},
     PAGES = {109--133},
      ISSN = {0046-5755,1572-9168},
   MRCLASS = {57N05 (20F38 57M07)},
  MRNUMBER = {1890954},
MRREVIEWER = {Stephen\ P.\ Humphries},
       DOI = {10.1023/A:1014289127999},
       URL = {https://doi.org/10.1023/A:1014289127999},
}

\end{document}